\newtheorem{theorem}{Theorem}[section]
\newtheorem{lemma}{Lemma}[section]
\newtheorem{corollary}{Corollary}[section]
\newtheorem{proposition}{Proposition}[section]
\newtheorem{definition}{Definition}[section]
\newtheorem{remark}{Remark}[section]
\newtheorem{assumption}{Assumption}
\def\cal{\mathcal}
\numberwithin{equation}{section}
\DeclareMathOperator\diag{diag}
\DeclareMathOperator\Tr{Tr}
\begin{document}

\title{Linear quadratic mean field games: Decentralized $O(1/N)$-Nash equilibria}

\author{Minyi Huang}
\address{School  of Mathematics and Statistics, Carleton University,
Ottawa, ON K1S 5B6, Canada}
\email{mhuang@math.carleton.ca}
\thanks{In honor of the 60th birthday of Professor Lei Guo, Institute of Systems Science, Chinese Academy of Sciences, Beijing, China.}

\author{Xuwei Yang}
\address{School of Mathematics and Statistics, Carleton University, Ottawa, ON K1S 5B6, Canada}
\email{xuweiyang@cunet.carleton.ca}

\keywords{Mean field games, linear quadratic, asymptotic solvability,  Riccati equations, decentralized strategies, $\epsilon$-Nash equilibria}

\begin{abstract}
This paper studies an asymptotic solvability problem for  linear quadratic (LQ) mean field games with controlled diffusions and indefinite weights for the state and control in the costs.
We employ a rescaling approach to derive a low dimensional Riccati ordinary differential equation (ODE) system, which characterizes a necessary and sufficient condition  for  asymptotic solvability. The rescaling technique is further used for performance estimates, establishing an $O(1/N)$-Nash equilibrium for the obtained decentralized strategies.
\end{abstract}

\maketitle

\tableofcontents

\section{Introduction}
\label{sec:intro}

Since its inception \cite{HMC2006,LL2007}, mean field game theory has undergone a phenomenal growth and found applications in diverse areas
\cite{BTB2016,CFS2015,CS2017,DAS2016,HJN2019,LW2011,LZ2019,LT2015,LRS2019,MCH2013,
SMN2018,SGU2016,WH2019,YMMS2012}.
The theory is inspired by ideas in statistical physics and overcomes the dimensionality difficulty in competitive decision problems involving a large population of agents. The reader is referred to \cite{BFY2013,CHM2017,C2013,CD2018} for an overview of basic theory and applications.

While mean field games have been developed with very different modelling frameworks, linear quadratic (LQ) mean field games are of particular importance and have been extensively studied due to their elegant closed-form solutions  \cite{BSYY2016,  HWW2016,  HCM2007,SMN2018}. Huang, Caines and Malham\'e \cite{HCM2007} adopt infinite horizon discounted costs and use the infinite population limit model to design decentralized strategies for the actual model with a large but finite population.
Li and Zhang \cite{LZ2008} study decentralized strategies with ergodic costs.
Wang and Zhang \cite{WZ2012} introduce Markov jumps in the system dynamics and costs.
Bardi and Priuli \cite{BP2014} study LQ $N$-person games and their mean field limit with ergodic costs. Huang, Wang and Wu \cite{HWW2016} adopt backward stochastic differential equations for modelling state processes. Moon and Basar \cite{MB2017} consider risk sensitive costs and address robustness.
Huang and Huang \cite{HH2017} consider linear diffusion dynamics
including model uncertainty treated as an adversarial player.
Tchuendom \cite{T2018} shows nonuniqueness can arise, but interestingly, uniqueness can be restored by the presence of common noise.
LQ mean field games have an extension by including a major player \cite{H2010,NH2012}.
 This modelling framework is introduced by Huang \cite{H2010}.
Bensoussan et al \cite{BCLY2017} consider Stackelberg equilibria under state and control delays. Caines and Kizikale \cite{CK2017} consider partial information and filtering based strategies for an LQ model with a major player.

In this paper we study a class of LQ mean field games
with common noise and indefinite weight matrices (simply called weights below) in the cost functional. We adopt the so-called asymptotic solvability framework in \cite{HZ2020}. Starting with  feedback perfect state information, this approach aims to determine feedback Nash strategies under such centralized information and next study how the solutions behave when the number of players increases.
It uses a rescaling method to derive a set of  Riccati ordinary differential equations (ODEs), which characterizes a necessary and sufficient condition for asymptotic solvability \cite{HZ2020}. This method can be extended to LQ mean field games with a major player \cite{MH2020}. Recently, Huang and Yang \cite{HY2020b} extend this asymptotic solvability notion to mean field social optimization, where the agents cooperatively optimize a social cost. That work further develops  a method of asymptotic analysis  to obtain tight estimates of optimality loss
when decentralized strategies are implemented. For our current model,  the test of asymptotic solvability reduces to checking two Riccati ODEs in a low dimensional space, which, as a result of the controlled individual and common noises, have higher nonlinearity than those Riccati
equations in \cite{HZ2020}.

In the analysis of mean field games, a crucial step is to examine how the strategies obtained in the mean field limit model perform when
implemented in the actual model with a large but finite population.
This can be addressed by establishing the so-called $\epsilon$-Nash equilibrium property, where $\epsilon\to 0$ as $N\to \infty$.
For LQ models \cite{BSYY2016,HWW2016,HCM2007,WZ2012} as well as some nonlinear cases \cite{NC2013}, one can obtain an $O(1/\sqrt{N})$-Nash equilibrium when all players are symmetric. This typically results from  cost estimates by the Cauchy-Schwarz inequality.
To our best knowledge on the existing literature, probably only  Basna, Hilbert and  Kolokoltsov \cite{BHK2014} have obtained an $O(1/N)$-Nash equilibrium result in a finite state mean field game. We will establish an $O(1/N)$-Nash equilibrium for the decentralized strategies obtained from the LQ mean field limit model;  our approach is  different from that in \cite{BHK2014} which relies on perturbation estimates of generators of continuous-time controlled Markov chains. We will
 directly treat the best response control problem of the unilateral agent in a high dimensional space and then employ the rescaling method to obtain accurate information about its performance improvement.
 We will develop extensive asymptotic error estimates by building upon techniques in the companion paper \cite{HY2020b} on social optimization.
  In a convergence problem of mean field games with common noise, Cardaliaguet et al \cite{CDLL2015} prove that the value functions of $N$ players converge in an average sense to the solution of the master equation, and the averaged error disappears by rate $1/N$ as $N\to \infty$. But their error bound is different from the $O(1/N)$-Nash equilibrium notion.

It will be helpful to briefly explain the route that we will follow in the analysis.
For the  LQ  Nash game with indefinite weights, we apply dynamic programming to derive a set of large-scale Riccati equations, which is used to
formulate the asymptotic solvability problem of the $N$-player game. In order to get useful information from the large Riccati equations, we exploit their symmetries to achieve dimension reduction and   next use a rescaling technique to  derive two key Riccati equations, which completely  characterize asymptotic solvability. By taking the mean field limit of the solution of the $N$-player game, we construct a set of decentralized strategies, which are then applied  to the $N$-player model.
We further obtain  explicit formulas for the per agent cost for three scenarios: i) the $N$ players apply the Nash equilibrium strategies $(\hat u_1, \cdots, \hat u_N)$; ii) the $N$ players apply  decentralized strategies $(\check u_1, \cdots, \check u_N)$ obtained from the mean field limit model; iii)  the player in question takes its best response while the other $N-1$ players apply these decentralized strategies.
When $N\to \infty$, the three cases have the same limit for the per agent cost. The comparison of the costs in scenarios ii) and iii) establishes the  $O(1/N)$-Nash equilibrium property. A comparison of the per agent costs
for the mean field game and the mean field social optimum enables us to  quantify the efficiency loss of the mean field game with respect to the social optimum; see the comparison in the companion paper \cite{HY2020b}.

\subsection{Organization of the paper}

Section \ref{sec:LQNG} introduces the $N$-player LQ Nash game with indefinite weight matrices in the cost functionals. The set of feedback Nash equilibrium strategies is characterized using a system of Riccati ODEs in Section
 \ref{sec:RiccatiEqns}.
The asymptotic solvability problem is studied in  Section \ref{sec:AS} and a necessary and sufficient condition is derived. Section \ref{sec:DecentralStrats} constructs a set of decentralized strategies for
the $N$-player game, and
 Section \ref{sec:O(1/N)Nash} proves an $O(1/N)$-Nash equilibrium theorem.
A numerical example is presented in Section \ref{sec:num}.
 Section \ref{sec:conclusion} concludes the paper.

\subsection{Notation}

Let ${\mathcal S}^n$ be the set of $n\times n$ real symmetric matrices. We denote the quadratic form $[x]_M^2 = x^T M x$ for $M\in {\mathcal S}^n$ and $x\in \mathbb{R}^n$.
We use $I$ to denote an identity matrix of compatible dimensions, and sometimes write $I_k$ to indicate the $k\times k$ identity matrix.
We use $0$ to denote either the scalar zero or a zero vector/matrix of compatible dimensions.

We denote  by $|F|$  the Euclidean norm of a vector or matrix $F$, by  $\mathbf{1}_{k\times l}$ a $k\times l$ matrix with all entries equal to $1$,  by $\otimes$ the Kronecker product, and by the column vectors $\{e^k_1, \cdots, e^k_k\}$ the canonical basis of $\mathbb{R}^k$.
For a function $f(t,x)$, we may write partial derivatives  $\partial f/\partial t$ as $\partial_t f$;
$\partial f/\partial x$ as $\partial_x f$;  and
$\partial^2 f/\partial x^2$ as $\partial_x^2 f$.

\section{The LQ Nash game}
\label{sec:LQNG}

Consider a system of $N$ players (or called agents) denoted by $\mathcal{A}_i$, $1\leq i \leq N$. The state process $X_i(t)$ satisfies the following stochastic differential equation (SDE)
\begin{align}
\label{Xi}
d X_i(t) & = (A X_i(t) + Bu_i(t) + G X^{(N)}(t))  dt
 + (B_1 u_i(t) + D) d W_i(t)   \\
& \quad + ( B_0 u^{(N)}(t) + D_0 ) d W_0(t) , \notag
\end{align}
where we have the state $X_i(t) \in \mathbb{R}^n$, the control
$u_i(t)\in \mathbb{R}^{n_1}$,   the mean field state $X^{(N)} \coloneqq(1/N)\sum_{i=1}^N X_i$ and the control mean field  $u^{(N)} \coloneqq (1/N)\sum_{i=1}^N u_i$.
The initial states $\{X_i(0): 1 \leq i \leq N\}$ are independent with  $\mathbb{E}|X_i(0)|^2<\infty$. The individual noise processes $\{W_i: 1\leq i \leq N\}$ are $1$-dimensional independent standard Brownian motions, which are also independent of $\{X_i(0): 1\leq i \leq N\}$.
The common noise $W_0$ is a $1$-dimensional standard Brownian motion independent of $\{W_i: 1 \leq i \leq N\}$ and $\{X_i(0): 1 \leq i \leq N\}$.
In contrast to \cite{HZ2018a,HZ2020}, each individual noise is affected by that player's control, and the model contains a common noise affected by the control mean field.

The individual cost functional (simply called cost) of  $\cal{A}_i$, $1\leq i \leq N$, is given by
\begin{align}
J_i(u_{1}, \cdots, u_N)  = & \mathbb{E} \left[ \int_0^T \Big( [X_i(t) - \Gamma X^{(N)}(t)]_Q^2 + [u_i(t)]_{R}^2 \Big)  \ dt \right.
 \label{Ji}\\
& \quad \left. +  [X_i(T) - \Gamma_f X^{(N)}(T)]_{Q_f}^2 \right] ,
\notag
\end{align}
where we denote $[x]_M^2 = x^T M x$ for $M\in\mathcal{S}^n$ and $x\in \mathbb{R}^n$.
The constant matrices $A$, $B$, $B_0$ $B_1$, $D$, $D_0$, $G$, $\Gamma$, $Q$, $R$, $\Gamma_f$, $Q_f$ above have compatible dimensions, and $Q$, $Q_f$, $R$ are symmetric, possibly indefinite,  matrices.

Define
\begin{align}
& X(t) = \begin{bmatrix}X_1(t) \\ \vdots \\ X_N(t) \end{bmatrix} \in
\mathbb{R}^{Nn},\quad u_{-i}=(u_1, \cdots, u_{i-1}, u_{i+1}, \cdots, u_N), \notag \\
&  \mathbf{A}
= \diag[A, \cdots, A] + \mathbf{1}_{N \times N} \otimes \tfrac{G}{N} \in \mathbb{R}^{Nn \times Nn} ,  \notag \\
&  \mathbf{B}_0 =  \mathbf{1}_{N\times 1} \otimes  \tfrac{B_0}{N} \in
\mathbb{R}^{Nn \times n_1} , \quad
\mathbf{D}_0 = \mathbf{1}_{N\times 1} \otimes D_0 \in \mathbb{R}^{Nn \times 1} ,   \notag \\
&  \widehat{\mathbf{B}}_k = e^N_k \otimes B  \in \mathbb{R}^{Nn \times n_1} , \quad
 \mathbf{B}_k = e^N_k \otimes B_1 \in \mathbb{R}^{Nn\times n_1} , \nonumber\\
&  \mathbf{D}_k = e^N_k \otimes D \in \mathbb{R}^{Nn \times 1} , \quad 1 \leq k \leq N .   \notag
\end{align}
Then $X =(X_1^T, \cdots, X_N^T)^T$ has the following dynamics
\begin{align} \label{X}
d X(t)  = &  \Big( \mathbf{A} X(t) + \sum_{i=1}^N \widehat{\mathbf{B}}_i u_i(t)\Big) dt + \sum_{i=1}^N ( \mathbf{B}_i u_i(t) + \mathbf{D}_i ) d W_i
\\  & + \Big( \mathbf{B}_0 \sum_{i=1}^N u_i(t)  + \mathbf{D}_0 \Big) d W_0 . \notag
\end{align}

We denote
\begin{align}
& \mathbf{K}_i
= [0, \cdots, 0, I_n, 0, \cdots, 0] - (1/N) [\Gamma, \Gamma, \cdots,
\Gamma]\in \mathbb{R}^{n\times Nn} , \notag \\
& \mathbf{K}_{i f} = [0, \cdots, 0, I_n, 0, \cdots, 0] - (1/N) [\Gamma_f , \Gamma_f, \cdots, \Gamma_f ] ,  \notag \\
& \mathbf{Q}_i = \mathbf{K}_i^T Q \mathbf{K}_i, \quad
\mathbf{Q}_{if} = \mathbf{K}_{i f}^T Q_f \mathbf{K}_{i f} . \notag
\end{align}
The individual cost \eqref{Ji} can be written as
\begin{align}
J_i(u_{i}, u_{-i}) & = \mathbb{E} \left[ \int_0^T \Big( [X(t)]_{\mathbf{Q}_i}^2 + [u_i(t)]_{R}^2 \Big)  \ dt
 +  [X(T) ]_{\mathbf{Q}_{i f}}^2 \right] .
\label{Ji2}
\end{align}
We begin by  solving the LQ Nash game
 under  closed-loop perfect state (CLPS) information,
where the full state vector $X(t)$ is observed by each player.
The players seek a set of Nash equilibrium strategies $(\hat u_1, \cdots, \hat u_N)$.

For notational simplicity, Sections \ref{sec:RiccatiEqns}--\ref{sec:O(1/N)Nash}
will treat  a simplified model \eqref{Xi}--\eqref{Ji} with $D= D_0 = 0$.
 The extension to the general case will be discussed in Section \ref{sec:O(1/N)Nash}.

\section{Riccati equations and feedback Nash strategies}
\label{sec:RiccatiEqns}

Based on \eqref{Ji2}, we may naturally define the cost $J(t,\mathbf{x}, u_1, \cdots, u_N)$ where the running cost is integrated on $[t,T]$ instead of $[0,T]$ with initial state $X(t) = \mathbf{x} = (x_1^T, \cdots, x_N^T)^T$.
Let $V_i(t, \mathbf{x})$ denote the  value function of player $\mathcal{A}_i$.
The Hamilton--Jacobi--Bellman (HJB) equations of the $N$ players associated with \eqref{X}--\eqref{Ji2} (taking $D_0 = D = 0$) are
\begin{align}
\label{HJBV}
 & - \frac{\partial V_i}{\partial t}
 =    \frac{\partial^T V_i}{\partial \mathbf{x}}  \Big( \mathbf{A} \mathbf{x}
+ \sum_{k =1}^N \widehat{\mathbf{B}}_k \hat u_k \Big)
  + \frac{1}{2}  \Big(  \sum_{k=1}^N \hat u_k \Big)^T \mathbf{B}_0^T \frac{\partial^2 V_i}{\partial \mathbf{x}^2 }   \mathbf{B}_0
 \Big( \sum_{k=1}^N \hat u_k \Big)     \\
 &\qquad\qquad   + \frac{1}{2} \sum_{k=1}^N ( \mathbf{B}_k \hat u_k  )^T \frac{\partial^2 V_i}{\partial \mathbf{x}^2}  ( \mathbf{B}_k \hat u_k  )
 + \mathbf{x}^T \mathbf{Q}_i \mathbf{x}
 + \hat u_i^T R \hat u_i   , \notag \\
&  V_i( T, \mathbf{x} ) =  \mathbf{x}^T \mathbf{Q}_{if} \mathbf{x}  , \quad 1\leq i \leq N .   \notag
\end{align}
Each  $\hat u_i$ is the minimizer in the HJB equation of $V_i(t, \mathbf{x})$
as specified below. Taking $(u_1, \cdots, u_N)$ in place of
 $(\hat u_1, \cdots, \hat u_N)$, we write  the right hand side of \eqref{HJBV} in the form:
$${\mathcal H}(\mathbf{x}, \partial_\mathbf{x} V_i, \partial_\mathbf{x}^2 V_i, u_i, u_{-i} ).$$ Then we require
\begin{align}\label{Hminimizer}
\hat u_i= \arg\min_{u_i}  {\mathcal H} (\mathbf{x}, \partial_\mathbf{x} V_i, \partial_\mathbf{x}^2 V_i, u_i, \hat u_{-i}   ), \quad \forall  i.
\end{align}

We will calculate $\hat u_i$ under the following conditions:  for all $(t, \mathbf{x}) \in [0, T] \times \mathbb{R}^{Nn}$,
\begin{align}
\label{Ri>0}
&   R +  \frac{1}{2} \mathbf{B}_i^T \frac{\partial^2 V_i(t, \mathbf{x})}{\partial \mathbf{x}^2} \mathbf{B}_i >0 ,
\\
&  I + \frac{1}{2} \sum_{k=1}^N \Big(  R +  \frac{1}{2} \mathbf{B}_k^T 
\frac{\partial^2 V_k(t, \mathbf{x})}{\partial \mathbf{x}^2} \mathbf{B}_k    \Big)^{-1} \mathbf{B}_0^T \frac{\partial^2 V_k(t, \mathbf{x})}{\partial \mathbf{x}^2} \mathbf{B}_0    \  \text{is invertible} ,
\label{inv}\\
& R +  \frac{1}{2} \mathbf{B}_i^T \frac{\partial^2 V_i(t, \mathbf{x})}{\partial \mathbf{x}^2} \mathbf{B}_i
+  \frac{1}{2} \mathbf{B}_0^T \frac{\partial^2 V_i(t, \mathbf{x})}{\partial \mathbf{x}^2} \mathbf{B}_0 >0 .
 \label{R0i>0}
\end{align}
By \eqref{Hminimizer}, we derive
\begin{align}
 0 = \widehat{\mathbf{B}}_i^T \frac{\partial V_i}{\partial \mathbf{x}}
 + \mathbf{B}_0^T \frac{\partial^2 V_i}{\partial \mathbf{x}^2} \mathbf{B}_0 \sum_{i\ne k=1}^N \hat u_k + \mathbf{B}_0^T \frac{\partial^2 V_i}{\partial \mathbf{x}^2} \mathbf{B}_0 \hat u_i + \mathbf{B}_i^T \frac{\partial^2 V_i}{\partial \mathbf{x}^2}
\mathbf{B}_i \hat u_i + 2 R \hat u_i , \label{NEuhat}
\end{align}
which implies that
\begin{align}
  \hat u_i
= -  \frac{1}{2} \Big(  R  +  \frac{1}{2}\mathbf{B}_i^T \frac{\partial^2 V_i}{\partial \mathbf{x}^2}\mathbf{B}_i   \Big)^{-1}
 \Big( \widehat{\mathbf{B}}_i^T \frac{\partial V_i}{\partial \mathbf{x}}
 + \mathbf{B}_0^T \frac{\partial^2 V_i}{\partial \mathbf{x}^2} \mathbf{B}_0 \sum_{k=1}^N \hat u_k \Big) . \label{uisumu}
\end{align}
Adding up the $N$ equations in \eqref{uisumu} leads to
\begin{align}
 \sum_{i=1}^N  \hat u_i
= -  \frac{1}{2} \sum_{i=1}^N \Big(  R  +  \frac{1}{2}\mathbf{B}_i^T \frac{\partial^2 V_i}{\partial \mathbf{x}^2}\mathbf{B}_i  \Big)^{-1}
 \Big( \widehat{\mathbf{B}}_i^T \frac{\partial V_i}{\partial \mathbf{x}}
 + \mathbf{B}_0^T \frac{\partial^2 V_i}{\partial \mathbf{x}^2} \mathbf{B}_0 \sum_{k=1}^N \hat u_k \Big) , \notag
\end{align}
which
under condition \eqref{inv} yields
\begin{align}
  \sum_{k = 1}^N \hat u_k
 = & - \frac{1}{2} \Big[ I + \frac{1}{2} \sum_{k=1}^N ( R + \frac{1}{2} \mathbf{B}_k^T \frac{\partial^2 V_k}{\partial \mathbf{x}^2} \mathbf{B}_k   )^{-1} \mathbf{B}_0^T \frac{\partial^2 V_k}{\partial \mathbf{x}^2} \mathbf{B}_0 \Big]^{-1} \cdot \notag \\
&
\quad \sum_{k=1}^N \Big(  R + \frac{1}{2} \mathbf{B}_k^T \frac{\partial^2 V_k}{\partial \mathbf{x}^2} \mathbf{B}_k     \Big)^{-1}  \widehat{\mathbf{B}}_k^T \frac{\partial V_k}{\partial \mathbf{x}}
\notag \\
 = : &\ \mathbf{M} .  \label{sumu}
\end{align}
Combining \eqref{uisumu} and \eqref{sumu} gives that
\begin{align}
  \hat u_i = -  \frac{1}{2} \Big(  R  +  \frac{1}{2}\mathbf{B}_i^T \frac{\partial^2 V_i}{\partial \mathbf{x}^2}\mathbf{B}_i   \Big)^{-1}
 \Big( \widehat{\mathbf{B}}_i^T \frac{\partial V_i}{\partial \mathbf{x}}
 + \mathbf{B}_0^T \frac{\partial^2 V_i}{\partial \mathbf{x}^2} \mathbf{B}_0 \mathbf{M}  \Big) .
\label{uieqm}
\end{align}

We substitute \eqref{uieqm} into the right hand side of
\eqref{HJBV} to obtain
\begin{align}
\label{VPDE}
 - \frac{\partial V_i}{\partial t} = & \frac{1}{4} \Big[ \mathbf{B}_0^T \frac{\partial^2 V_i}{\partial \mathbf{x}^2} \mathbf{B}_0 \mathbf{M}
 - \widehat{\mathbf{B}}_i^T \frac{\partial V_i}{\partial \mathbf{x}} \Big]^T   \Big(  R  +  \frac{1}{2}\mathbf{B}_i^T \frac{\partial^2 V_i}{\partial \mathbf{x}^2}\mathbf{B}_i   \Big)^{-1}
 \Big[ \mathbf{B}_0^T \frac{\partial^2 V_i}{\partial \mathbf{x}^2} \mathbf{B}_0 \mathbf{M} + \widehat{\mathbf{B}}_i^T \frac{\partial V_i}{\partial \mathbf{x}} \Big]   \\
  & - \frac{1}{2} \frac{\partial^T V_i}{\partial \mathbf{x}} \sum_{i\ne k = 1 }^N
 \widehat{\mathbf{B}}_k   \Big(  R  +  \frac{1}{2}\mathbf{B}_k^T \frac{\partial^2 V_k}{\partial \mathbf{x}^2}\mathbf{B}_k   \Big)^{-1}
 \Big[    \mathbf{B}_0^T \frac{\partial^2 V_k }{\partial \mathbf{x}^2} \mathbf{B}_0 \mathbf{M}  + \widehat{\mathbf{B}}_k^T \frac{\partial V_k }{\partial \mathbf{x}}  \Big]  \notag \\
& + \frac{1}{8} \sum_{i\ne k=1}^N
 \Big[  \mathbf{B}_0^T \frac{\partial^2 V_k }{\partial \mathbf{x}^2} \mathbf{B}_0 \mathbf{M}   + \widehat{\mathbf{B}}_k^T \frac{\partial V_k }{\partial \mathbf{x}}  \Big]^T
  \Big(  R  +  \frac{1}{2}\mathbf{B}_k^T \frac{\partial^2 V_k}{\partial \mathbf{x}^2}\mathbf{B}_k   \Big)^{-1}
   \cdot \notag \\
& \qquad\qquad \mathbf{B}_k^T \frac{\partial^2 V_i}{\partial \mathbf{x}^2} \mathbf{B}_k
 \Big(  R  +  \frac{1}{2}\mathbf{B}_k^T \frac{\partial^2 V_k}{\partial \mathbf{x}^2}\mathbf{B}_k   \Big)^{-1}
 \Big[  \mathbf{B}_0^T \frac{\partial^2 V_k }{\partial \mathbf{x}^2} \mathbf{B}_0 \mathbf{M}   + \widehat{\mathbf{B}}_k^T \frac{\partial V_k }{\partial \mathbf{x}}  \Big]  \notag \\
& + \frac{1}{2} \mathbf{M}^T \mathbf{B}_0^T \frac{\partial^2 V_i}{\partial \mathbf{x}^2} \mathbf{B}_0 \mathbf{M}
+ \mathbf{x}^T \mathbf{Q}_i \mathbf{x}  + \frac{\partial^T V_i}{\partial \mathbf{x}} \mathbf{A} \mathbf{x} ,
\notag \\
V_i( T, \mathbf{x} ) = & \mathbf{x}^T \mathbf{Q}_{i f} \mathbf{x}  , \quad 1\leq i \leq N  , \notag
\end{align}
subject to the conditions \eqref{Ri>0}, \eqref{inv} and \eqref{R0i>0}.

We are interested in a solution of the form
\begin{align}
V_i(t, \mathbf{x}) = \mathbf{x}^T \mathbf{P}_i(t) \mathbf{x} , \quad
 1\leq i \leq N ,
\label{Vansatz}
\end{align}
where  $\mathbf{P}_i(t)$ is a symmetric matrix function of $t\in [0, T]$ and is differentiable in $t$.
Substituting \eqref{Vansatz} into \eqref{VPDE}, we obtain the ODE system for $\mathbf{P}_i$, $1\leq i \leq N$:
\begin{align}
\begin{cases}
- \dot{\mathbf{P}}_i =  \mathbf{P}_i \mathbf{A}  + \mathbf{A}^T \mathbf{P}_i + \mathbf{Q}_i
 - \mathbf{P}_i \widehat{\mathbf{B}}_i
(R + \mathbf{B}_i^T \mathbf{P}_i \mathbf{B}_i )^{-1}
   \widehat{\mathbf{B}}_i^T  \mathbf{P}_i
 \\
\qquad\quad
  +  \mathbf{M}_{0}^T \mathbf{B}_0^T \mathbf{P}_i \mathbf{B}_0
(R + \mathbf{B}_i^T \mathbf{P}_i \mathbf{B}_i )^{-1}
   \mathbf{B}_0^T \mathbf{P}_i \mathbf{B}_0  \mathbf{M}_{0}  \\
\qquad\quad - \mathbf{P}_i \sum_{i\ne k=1}^N
 \widehat{\mathbf{B}}_k
 (  R + \mathbf{B}_k^T  \mathbf{P}_k \mathbf{B}_k  )^{-1} (  \mathbf{B}_0^T \mathbf{P}_k \mathbf{B}_0  \mathbf{M}_{0}  +  \widehat{\mathbf{B}}_k^T  \mathbf{P}_k )     \\
\qquad \quad - \sum_{i\ne k=1}^N
(  \mathbf{B}_0^T \mathbf{P}_k \mathbf{B}_0 \mathbf{M}_{0}
 +  \widehat{\mathbf{B}}_k^T  \mathbf{P}_k )^T
 ( R + \mathbf{B}_k^T  \mathbf{P}_k \mathbf{B}_k  )^{-1}
 \widehat{\mathbf{B}}_k^T  \mathbf{P}_i
   \\
 \qquad\quad +  \sum_{i\ne k=1}^N (  \mathbf{B}_0^T \mathbf{P}_k \mathbf{B}_0  \mathbf{M}_{0}  +  \widehat{\mathbf{B}}_k^T  \mathbf{P}_k )^T   ( R + \mathbf{B}_k^T  \mathbf{P}_k \mathbf{B}_k  )^{-1}
\mathbf{B}_k^T \mathbf{P}_i \mathbf{B}_k  \cdot  \\
 \qquad\quad \ \
(  R + \mathbf{B}_k^T  \mathbf{P}_k \mathbf{B}_k  )^{-1}
( \mathbf{B}_0^T \mathbf{P}_k \mathbf{B}_0  \mathbf{M}_{0}  +  \widehat{\mathbf{B}}_k^T  \mathbf{P}_k )
 +  \mathbf{M}_{0}^T \mathbf{B}_0^T \mathbf{P}_i \mathbf{B}_0 \mathbf{M}_{0}
  ,  \\
 \mathbf{P}_i(T) = \mathbf{Q}_{if}  ,
\end{cases}
\label{ODEP}
\end{align}
subject to
\begin{align}
\begin{cases}
({\rm i}) \quad \ \ R + \mathbf{B}_i^T  \mathbf{P}_i(t) \mathbf{B}_i  >0  ,
\forall t \in [0, T] ,     \\
({\rm ii}) \quad \ R + \mathbf{B}_i^T \mathbf{P}_i(t) \mathbf{B}_i + \mathbf{B}_0^T \mathbf{P}_i(t) \mathbf{B}_0 >0 , \ \forall t\in[0, T] , \\
({\rm iii}) \quad I + \sum_{k=1}^N \left( R +   \mathbf{B}_k^T \mathbf{P}_k(t)
\mathbf{B}_k  \right)^{-1} \mathbf{B}_0^T  \mathbf{P}_k(t) \mathbf{B}_0  \ \text{is invertible} , \ \forall t \in [0, T] ,
\end{cases} \label{Pcon}
\end{align}
where
\begin{align}
\mathbf{M}_{0} := - \Big[ I + \sum_{k=1}^N \left( R +   \mathbf{B}_k^T \mathbf{P}_k \mathbf{B}_k
   \right)^{-1} \mathbf{B}_0^T  \mathbf{P}_k \mathbf{B}_0 \Big]^{-1}
 \sum_{k=1}^N \left(  R + \mathbf{B}_k^T  \mathbf{P}_k \mathbf{B}_k    \right)^{-1}
 \widehat{\mathbf{B}}_k^T  \mathbf{P}_k . \notag
\end{align}

In further analysis, if we just say $(\mathbf{P}_1, \cdots, \mathbf{P}_N )$ is a solution of \eqref{ODEP}, that means \eqref{Pcon} is in effect unless otherwise indicated. Condition \eqref{Pcon}--(ii) is not used in the vector field of the Riccati equation, but will play a role in the best response control problem later.

\begin{remark}
\label{rmk:uniqueP}
If the ODE system \eqref{ODEP} admits a solution $(\mathbf{P}_1, \cdots, \mathbf{P}_N )$ on $[0, T]$, then it is the unique solution since the vector field of the ODE system has a local Lipschitz property along the solution trajectory satisfying  \eqref{Pcon}--\emph{(i)} and \emph{(iii)}.
\end{remark}

The following theorem gives a sufficient condition for the existence of feedback Nash  strategies in terms of the Riccati equations \eqref{ODEP}. These strategies are called centralized due to the use of full state information by each player.
\begin{theorem}
\label{thm: FBNE}
If \eqref{ODEP} has a solution $(\mathbf{P}_1, \cdots, \mathbf{P}_N )$ on $[0, T]$, then the Nash game \eqref{X}--\eqref{Ji2} has a set of feedback Nash strategies $(\hat u_1, \cdots, \hat u_N)$ given by
\begin{align}
\label{uiP}
 \hat u_i(t) = - [  R + \mathbf{B}_i^T  \mathbf{P}_i(t)
 \mathbf{B}_i  ]^{-1}
 [ \mathbf{B}_0^T \mathbf{P}_i(t) \mathbf{B}_0  \mathbf{M}_{0}(t)   +  \widehat{\mathbf{B}}_i^T  \mathbf{P}_i(t) ] X(t) ,\  1\leq i \leq N .
\end{align}
\end{theorem}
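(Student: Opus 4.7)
The plan is a standard stochastic verification argument applied to each player's best-response control problem, with the ansatz $V_i(t,\mathbf{x}) = \mathbf{x}^T \mathbf{P}_i(t) \mathbf{x}$ playing the role of the candidate value function of $\mathcal{A}_i$. Fix $i$, freeze the feedback strategies $\hat u_k$ for $k\neq i$ as prescribed in \eqref{uiP}, and consider $\mathcal{A}_i$'s problem of minimizing $J_i(u_i, \hat u_{-i})$ over admissible $u_i$. Under $(u_i, \hat u_{-i})$ the state SDE \eqref{X} becomes affine in $u_i$, with $u_i$ entering the drift through $\widehat{\mathbf{B}}_i$, the individual noise channel through $\mathbf{B}_i$, and the common noise channel through $\mathbf{B}_0$.

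In the associated Hamiltonian $\mathcal{H}(\mathbf{x}, \partial_\mathbf{x} V_i, \partial_\mathbf{x}^2 V_i, u_i, \hat u_{-i})$, the coefficient of the purely quadratic term in $u_i$ is $R + \mathbf{B}_i^T \mathbf{P}_i \mathbf{B}_i + \mathbf{B}_0^T \mathbf{P}_i \mathbf{B}_0$, which is positive definite by \eqref{Pcon}(ii). Completing the square in $u_i$ produces a unique pointwise minimizer, and I would check by direct calculation that once the frozen feedbacks $\hat u_{-i}$ are substituted (using $\sum_{k}\hat u_k = \mathbf{M}_0 X$ from \eqref{sumu} together with \eqref{Pcon}(iii) to solve the coupled linear algebra), this minimizer coincides exactly with the formula for $\hat u_i$ in \eqref{uiP}; this identification is precisely the content of the first-order condition \eqref{NEuhat}. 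Substituting $u_i = \hat u_i$ back into $\mathcal{H}$ and inserting the quadratic ansatz, the HJB of player $i$'s best-response problem reduces to matching the quadratic form in $\mathbf{x}$, which is exactly the Riccati equation \eqref{ODEP} that holds by hypothesis. Hence $V_i$ solves the best-response HJB and $\hat u_i$ realizes its pointwise minimizer.

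A standard verification then closes the argument: for any admissible $u_i$ with $\mathbb{E}\int_0^T |u_i(t)|^2\,dt < \infty$, apply It\^o's formula to $V_i(s, X(s))$ on $[t, T]$ under $(u_i, \hat u_{-i})$, take expectation (the stochastic integrals are true martingales by the usual LQ moment estimates), and combine with the HJB inequality $-\partial_t V_i \leq \mathcal{H}(\mathbf{x}, \partial_\mathbf{x} V_i, \partial_\mathbf{x}^2 V_i, u_i, \hat u_{-i})$ and the terminal condition $V_i(T,\mathbf{x}) = \mathbf{x}^T \mathbf{Q}_{if}\mathbf{x}$ to obtain $V_i(t, X(t)) \leq J_i(u_i, \hat u_{-i}\mid t, X(t))$, with equality when $u_i = \hat u_i$. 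The main obstacle is the consistency check in the middle step: the Riccati system \eqref{ODEP} was derived by \emph{simultaneous} first-order conditions in $(\hat u_1,\ldots,\hat u_N)$, yet the verification requires the \emph{same} $\mathbf{P}_i$ to be the value function of the \emph{single-player} problem with $\hat u_{-i}$ externally frozen. This is where \eqref{Pcon}(ii) is essential and strictly stronger than (i): from player $i$'s viewpoint alone, its control enters both its own individual noise through $\mathbf{B}_i$ and the common noise through $\mathbf{B}_0$, so the second-order sufficient condition for a genuine pointwise minimum must include the $\mathbf{B}_0^T \mathbf{P}_i \mathbf{B}_0$ term, and without it the verification step would not go through even if a formal solution of \eqref{ODEP} existed.
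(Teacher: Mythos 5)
Your proposal is correct and follows essentially the same route as the paper's proof: freeze $\hat u_{-i}$, treat player $i$'s best response as a single-player indefinite LQ problem whose Hamiltonian has quadratic coefficient $R+\mathbf{B}_i^T\mathbf{P}_i\mathbf{B}_i+\mathbf{B}_0^T\mathbf{P}_i\mathbf{B}_0$ (hence the role of \eqref{Pcon}(ii)), identify the pointwise minimizer with $\hat u_i$ via the coupled first-order conditions, and close with completion of squares/verification. The only difference is presentational: the paper carries out your ``consistency check'' at the level of Riccati ODEs, rewriting \eqref{ODEP} for $\mathbf{P}_1$ into an equivalent vector field built from the best-response gain and invoking ODE uniqueness to conclude $\mathbf{P}_1^{\rm br}=\mathbf{P}_1$, whereas you perform the equivalent substitution directly in the HJB equation.
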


\begin{proof}
See Appendix~\ref{appendix:pflmPsubmat}.
\end{proof}

The best response control problem in the proof of Theorem \ref{thm: FBNE} amounts to LQ optimal control with indefinite weights in the cost. The HJB equation \eqref{HJBV} is only used for constructing  \eqref{ODEP}. The rigorous proof of $\hat u_1$ as a best response strategy on $[t, T]$ given $(t, \mathbf{x}, \hat u_{-1})$ has been solely based  on  the Riccati equation system \eqref{ODEP} itself.

\section{Asymptotic solvability}
\label{sec:AS}

We start with a representation of the matrix $\mathbf{P}_i$ if the ODE system \eqref{ODEP} has a solution.
Write the $Nn \times Nn$ identity matrix $I_{Nn}$ as
$I_{Nn} = \diag[I_n, I_n, \cdots, I_n]$.
Let $J_{ij}$ denote the matrix obtained by exchanging the $i$th and $j$th rows of  submatrices in $I_{Nn}$.
\begin{lemma}
\label{lm:Psubmat}
Suppose \eqref{ODEP} has a solution $(\mathbf{P}_1, \cdots,
\mathbf{P}_N)$ on $[0, T]$.
Then $\mathbf{P}_i$, $1\leq i \leq N$, have the representation
\begin{align}
\label{Psubmat}
\mathbf{P}_1 = \begin{bmatrix}
\Pi_1^{N} & \Pi_2^{N} & \Pi_2^{N} & \cdots & \Pi_2^{N} \\
 \Pi_2^{NT} & \Pi_3^N & \Pi_4^N & \cdots & \Pi_4^N \\
 \Pi_2^{NT} & \Pi_4^N & \Pi_3^N & \cdots & \Pi_4^N \\
\vdots & \vdots & \vdots & \ddots & \vdots \\
 \Pi_2^{NT} & \Pi_4^N & \Pi_4^N & \cdots & \Pi_3^N \\
\end{bmatrix} , \quad
\mathbf{P}_i = J_{1i}^T \mathbf{P}_1 J_{1i} , \quad \forall
 2\leq i \leq N,
\end{align}
where $\Pi_1^N(t)$, $\Pi_3^N(t)$,  $\Pi_4^N(t) \in \mathcal{S}^{n}$,
and $\Pi_2^N(t)\in \mathbb{R}^{n\times n}$.
\end{lemma}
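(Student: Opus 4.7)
The plan is to exploit the permutation symmetry of the $N$-player game together with uniqueness of the solution to \eqref{ODEP}, as noted in Remark \ref{rmk:uniqueP}. For a permutation $\sigma$ of $\{1,\ldots,N\}$, let $P_\sigma = P^N_\sigma \otimes I_n$ be the corresponding block-permutation matrix acting on $\mathbb{R}^{Nn}$. The first step is to record the covariance of the problem data: the player-symmetric objects $\mathbf{A}$ and $\mathbf{B}_0$ satisfy $P_\sigma^T \mathbf{A} P_\sigma = \mathbf{A}$ and $P_\sigma^T \mathbf{B}_0 = \mathbf{B}_0$, while the player-labelled objects transform by relabelling, e.g.\ $P_\sigma^T \widehat{\mathbf{B}}_k = \widehat{\mathbf{B}}_{\sigma^{-1}(k)}$, $P_\sigma^T \mathbf{B}_k = \mathbf{B}_{\sigma^{-1}(k)}$, $P_\sigma^T \mathbf{Q}_k P_\sigma = \mathbf{Q}_{\sigma^{-1}(k)}$, and $P_\sigma^T \mathbf{Q}_{kf} P_\sigma = \mathbf{Q}_{\sigma^{-1}(k)f}$. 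These hold because $\mathbf{K}_k$ has the pattern of $I_n$ in block $k$ and the common entry $-\Gamma/N$ in every block.

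Using these identities, I would verify that if $(\mathbf{P}_1,\ldots,\mathbf{P}_N)$ solves \eqref{ODEP} then so does $(\widetilde{\mathbf{P}}_1,\ldots,\widetilde{\mathbf{P}}_N)$ defined by $\widetilde{\mathbf{P}}_k := P_\sigma^T \mathbf{P}_{\sigma(k)} P_\sigma$. Concretely, one substitutes $\widetilde{\mathbf{P}}_k$ for $\mathbf{P}_k$ in the right-hand side of \eqref{ODEP} and tracks every factor of $P_\sigma$ and $P_\sigma^T$; the sums $\sum_{i\ne k}$ and the defining sum for $\mathbf{M}_{0}$ are preserved upon reindexing $k \mapsto \sigma^{-1}(k)$, so the whole expression becomes $P_\sigma^T$ times the original vector field at $\mathbf{P}_{\sigma(k)}$ times $P_\sigma$. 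The terminal condition and the positivity/invertibility constraints \eqref{Pcon} transform identically. Hence $(\widetilde{\mathbf{P}}_k)$ is also a solution, and Remark \ref{rmk:uniqueP} forces $\mathbf{P}_k = P_\sigma^T \mathbf{P}_{\sigma(k)} P_\sigma$ for all $k$ and $\sigma$.

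Two specializations finish the lemma. First, taking any $\sigma$ that fixes $1$ gives $\mathbf{P}_1 = P_\sigma^T \mathbf{P}_1 P_\sigma$. Writing $\mathbf{P}_1 = [P_{kl}]_{k,l=1}^N$ in $n\times n$ blocks, invariance under all such $\sigma$ forces $P_{11}$ arbitrary (call it $\Pi_1^N$), the blocks $P_{1l}$ for $l\ge 2$ to be equal (call them $\Pi_2^N$), the diagonal blocks $P_{kk}$ for $k\ge 2$ to be equal (call them $\Pi_3^N$), and the off-diagonal blocks $P_{kl}$ for $2\le k\ne l\le N$ to be equal (call them $\Pi_4^N$). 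The symmetry $\mathbf{P}_1 = \mathbf{P}_1^T$ then yields $P_{l1} = \Pi_2^{NT}$ for $l\ge 2$ and $\Pi_1^N, \Pi_3^N, \Pi_4^N \in \mathcal{S}^n$. Second, taking $\sigma$ to be the transposition of $1$ and $i$, so that $P_\sigma = J_{1i}$, yields $\mathbf{P}_i = J_{1i}^T \mathbf{P}_1 J_{1i}$ for each $i\ge 2$.

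The main obstacle is the bookkeeping in the second step: each of the several quadratic-in-$\mathbf{P}$ terms, including the one driven by $\mathbf{M}_{0}$, must be tracked carefully under $P_\sigma$. The $\mathbf{M}_{0}$ term is the most delicate because it couples all the $\mathbf{P}_k$'s through the common noise; what saves the argument is that its defining sum is over all $k$, so reindexing via $\sigma^{-1}$ leaves the sum unchanged, while the remaining $\widehat{\mathbf{B}}_k^T \mathbf{P}_k$ factor produces exactly one $P_\sigma^T$ on the left and one $P_\sigma$ on the right, as needed for the target covariance identity.
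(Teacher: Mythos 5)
Your proposal is correct and follows essentially the same route as the paper's own proof: exploit the permutation covariance of the coefficient matrices to show that a relabelled-and-conjugated tuple solves the same Riccati system, invoke uniqueness (Remark~\ref{rmk:uniqueP}) to conclude $\mathbf{P}_k = P_\sigma^T\mathbf{P}_{\sigma(k)}P_\sigma$, and then specialize to permutations fixing index $1$ (the paper's Step~1, which uses the transpositions $J_{jl}$, $2\le j<l\le N$) for the block structure of $\mathbf{P}_1$, and to the transpositions $J_{1i}$ (the paper's Step~2) for $\mathbf{P}_i = J_{1i}^T\mathbf{P}_1J_{1i}$. Your explicit tracking of the $\mathbf{M}_0$ term under reindexing is exactly the bookkeeping the paper leaves implicit.
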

\begin{proof}
See Appendix~\ref{appendix:pflmPsubmat}.
\end{proof}

Following the route in \cite{HZ2020}, we introduce the notion of asymptotic solvability of the Nash game \eqref{Xi}--\eqref{Ji} (with $D = D_0 = 0$).
\begin{definition}
\label{def:AS}
The Nash game \eqref{Xi}--\eqref{Ji} is asymptotically solvable if there exist $N_0>0$ and $c_0>0$ such that  the ODE system \eqref{ODEP}--\eqref{Pcon} has a solution $(\mathbf{P}_1, \cdots, \mathbf{P}_N)$ on $[0, T]$ for all $N\geq N_0$, and that
\begin{align}
& \sup_{N\geq N_0} \sup_{0\leq t \leq T}  ( |\Pi_1^N| +  N|\Pi_2^N| +  N^2|\Pi_3^N|
+  N^2|\Pi_4^N| ) < \infty , \label{AS2} \\
&   R + \mathbf{B}_i^T   \mathbf{P}_i \mathbf{B}_i  \ge c_0 I ,  \
  \ \forall t\in[0, T] ,\ \forall N\geq N_0,\label{AS3} \\
 &  I + \sum_{k=1}^N \left( R +   \mathbf{B}_k^T \mathbf{P}_k \mathbf{B}_k    \right)^{-1} \mathbf{B}_0^T  \mathbf{P}_k \mathbf{B}_0  \ \text{is invertible},\   \forall t \in [0, T] , \ \forall N \geq N_0 .
\label{AS4}
\end{align}
\end{definition}

\begin{remark}
The conditions \eqref{AS2}--\eqref{AS3} imply that
\begin{align}
  R + \mathbf{B}_i^T \mathbf{P}_i(t) \mathbf{B}_i + \mathbf{B}_0^T \mathbf{P}_i(t) \mathbf{B}_0 \ge (c_0/2) I, \quad \forall t,\notag
\end{align}
as long as a sufficiently large $N_0$ is chosen.
\end{remark}

Define the mapping
$\mathcal{R}_1: \mathcal{S}^n \to \mathcal{S}^n$ by
\begin{align}
 \mathcal{R}_1(Z) = R + B_1^T Z B_1 , \quad \mbox{for} \ Z \in \mathcal{S}^n . \notag
\end{align}
For  $\Lambda_k\in \mathcal{S}^n$, $k=1,3,4$, and $\Lambda_2\in\mathbb{R}^{n\times n} $,  we define the mappings:
\begin{align}
& \Psi_1(\Lambda_1) = \Lambda_1 B H B^T \Lambda_1 - \Lambda_1 A - A^T \Lambda_1 - Q  ,  \notag \\
& \Psi_2(\Lambda_1, \Lambda_2) =
 - \Lambda_1 G - \Lambda_2 (A+G) - A^T \Lambda_2
+ \Lambda_1 B H B^T \Lambda_2  + \Lambda_2 B H B^T \Lambda_1  \notag \\
 & \qquad\qquad\qquad
 + \Lambda_2 B H B^T \Lambda_2
  + Q \Gamma , \notag \\
& \Psi_3 (\Lambda_1, \Lambda_2, \Lambda_3, \Lambda_4)
 =
 - \Lambda_3 A  - A^T \Lambda_3- ( \Lambda_2^{T} + \Lambda_4 ) G
 - G^T(\Lambda_2 + \Lambda_4 )    \notag \\
& \hspace{3.3cm} - (\Lambda_1 + \Lambda_2^T ) B H B_0^T(\Lambda_1 + \Lambda_2 + \Lambda_2^T + \Lambda_4 ) B_0 H B^T (\Lambda_1 + \Lambda_2 )  \notag  \\
&\hspace{3.3cm}  +  \Lambda_3 B H B^T \Lambda_1
  + \Lambda_1 B H B^T \Lambda_3
 + \Lambda_4  B H B^T \Lambda_2 \notag   \\
& \hspace{3.3cm}
  + \Lambda_2^T B H B^T ( \Lambda_2 + \Lambda_4)
- \Lambda_1 B H B_1^T \Lambda_3 B_1 H B^T \Lambda_1
 - \Gamma^T Q \Gamma  , \notag \\
& \Psi_4 (\Lambda_1, \Lambda_2,  \Lambda_4)
 =- \Lambda_4 A - A^T \Lambda_4- (  \Lambda_2^T + \Lambda_4 ) G
  - G^T (   \Lambda_2 +  \Lambda_4 )
 - \Gamma^T Q \Gamma  \notag \\
& \hspace{2.7cm}
- (\Lambda_1 + \Lambda_2^T ) B H B_0^T
 ( \Lambda_1 + \Lambda_2 + \Lambda_2^T + \Lambda_4 ) B_0 H B^T ( \Lambda_1 + \Lambda_2 ) \notag \\
 & \hspace{2.7cm}
   + \Lambda_4 B H B^T ( \Lambda_1   + \Lambda_2 )
  + (\Lambda_1 + \Lambda_2^T ) B H B^T \Lambda_4
 + \Lambda_2^T B H B^T  \Lambda_2  , \notag
\end{align}
where we denote $H = ( \mathcal{R}_1(\Lambda_1) )^{-1}$ provided that the inverse matrix exists. It is clear that $\Psi_k$, $k=1,3,4$, are
$\mathcal{S}^n$-valued.

We introduce the following ODE system
\begin{align}
& \begin{cases}
   \dot \Lambda_1 =  \Psi_1(\Lambda_1) ,    \\
 \Lambda_1 (T) =  Q_f , \quad
 \mathcal{R}_1(\Lambda_1(t) ) >0, \quad \forall  t\in[0, T]  ,
\end{cases} \label{ODELam1} \\
&  \dot\Lambda_2 = \Psi_2(\Lambda_1, \Lambda_2) ,    \qquad
\Lambda_2 (T) =   - Q_f \Gamma_f  , \label{ODELam2} \\
& \dot \Lambda_3 = \Psi_3(\Lambda_1, \Lambda_2, \Lambda_3, \Lambda_4)  ,  \qquad
\Lambda_3 (T) =  \Gamma_f^T Q_f \Gamma_f ,
\label{ODELam3} \\
&   \dot \Lambda_4 =
 \Psi_4(\Lambda_1, \Lambda_2,  \Lambda_4 )  ,   \qquad
\Lambda_4 (T) = \Gamma_f^T Q_f \Gamma_f  . \label{ODELam4}
\end{align}

\begin{remark}
\label{rmk:solLam}
Note that \eqref{ODELam1} is the Riccati equation associated with an optimal control problem with controlled diffusion.
If \eqref{ODELam1}--\eqref{ODELam2} admits a solution $(\Lambda_1, \Lambda_2)$, substituting $(\Lambda_1, \Lambda_2)$ into
\eqref{ODELam3}--\eqref{ODELam4} gives a first order linear ODE system of
$(\Lambda_3, \Lambda_4)$, which then admits a unique solution on $[0, T]$.
\end{remark}

\begin{remark}
If $B_1 = 0$ and \eqref{ODELam1}--\eqref{ODELam2} has a solution on $[0,T]$,  from \eqref{ODELam3}--\eqref{ODELam4} we obtain a first order linear homogeneous ODE of $\Lambda_3 - \Lambda_4$ with zero terminal condition $\Lambda_3(T) - \Lambda_4(T) = 0$, which implies  that $\Lambda_3 - \Lambda_4 = 0$ on $[0, T]$. Such a representation by three submatrices is similar to \cite[Theorem 3]{HZ2020}.
\end{remark}

The following theorem characterizes asymptotic solvability of the Nash game \eqref{Xi}--\eqref{Ji} in terms of the low-dimensional ODE system \eqref{ODELam1}--\eqref{ODELam2}. The proof is postponed near the end of this section.
\begin{theorem}
\label{thm:NSAS}
The Nash game \eqref{Xi}--\eqref{Ji} has asymptotic solvability
if and only if \eqref{ODELam1}--\eqref{ODELam2} has a solution $(\Lambda_1, \Lambda_2)$ on $[0, T]$.
\end{theorem}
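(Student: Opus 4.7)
The plan is to convert the high-dimensional Riccati system \eqref{ODEP} into four coupled ODEs for the block entries $(\Pi_1^N,\Pi_2^N,\Pi_3^N,\Pi_4^N)$ furnished by Lemma~\ref{lm:Psubmat}, then rescale by the factors dictated by \eqref{AS2} and show that the resulting ODE system is an $O(1/N)$-perturbation of \eqref{ODELam1}--\eqref{ODELam4}. Once this bridge is in place, both directions of the theorem reduce to continuous dependence of ODEs on parameters.

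Concretely, set $\Lambda_1^N = \Pi_1^N$, $\Lambda_2^N = N\Pi_2^N$, $\Lambda_3^N = N^2\Pi_3^N$, $\Lambda_4^N = N^2\Pi_4^N$. I would first evaluate the auxiliary quantities appearing on the right of \eqref{ODEP}: the outputs $\widehat{\mathbf B}_k^T\mathbf P_i$ and $\mathbf B_k^T \mathbf P_i \mathbf B_k$, the common-noise expression $\mathbf B_0^T\mathbf P_i\mathbf B_0 = (1/N^2) B_0^T S_i^N B_0$ with $S_i^N$ the block-sum of $\mathbf P_i$, and the feedback gain $\mathbf M_0$ whose bracketed inverse expands as $I + O(1/N)$ via a Neumann series. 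Substituting these into the $(1,1)$, $(1,2)$, $(2,2)$ and $(2,3)$ blocks of $-\dot{\mathbf{P}}_1$ and matching them to the right-hand side gives ODEs for $(\Pi_1^N,\Pi_2^N,\Pi_3^N,\Pi_4^N)$; after multiplying through by the scaling factors, all explicit $N$-dependence collapses into a remainder $O(1/N)$, and the leading ODEs are exactly $\dot\Lambda_k = \Psi_k$ with the correct terminal data.

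For the \emph{if} direction, assume $(\Lambda_1,\Lambda_2)$ exists on $[0,T]$; then by Remark~\ref{rmk:solLam} $(\Lambda_3,\Lambda_4)$ also exists on $[0,T]$. The strict inequality $\mathcal R_1(\Lambda_1)>0$ together with standard continuous dependence applied to the $O(1/N)$-perturbation yields, for all large $N$, a solution $(\Lambda_k^N)$ on $[0,T]$ converging uniformly to $(\Lambda_k)$; undoing the scaling produces a solution of \eqref{ODEP}, while \eqref{AS2}, \eqref{AS3}, and \eqref{AS4} follow by the uniform convergence and continuity of matrix inversion. For the \emph{only if} direction, the uniform bound \eqref{AS2} provides uniform control of $(\Lambda_k^N)$ on $[0,T]$, hence of their derivatives via the rescaled ODE; an Arzel\`a--Ascoli argument extracts a subsequential limit that solves \eqref{ODELam1}--\eqref{ODELam4}, and \eqref{AS3} passes to the limit as $\mathcal R_1(\Lambda_1)\geq c_0 I > 0$.

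The main obstacle is the bookkeeping in the second paragraph: the $\mathbf B_0$-terms combine an $N$-fold sum of $O(1/N^2)$ quantities with prefactors whose Neumann expansion must be carried one order further than first appears necessary, so that every leading-order contribution is captured and the sums over $k$ reassemble exactly into $\Psi_1,\ldots,\Psi_4$ with no spurious cross-terms. Handling the pre- and post-multiplications by $\widehat{\mathbf B}_k$ and the $\mathbf M_0$-coupling across different $\mathbf P_k$'s is where the bulk of the algebra lies.
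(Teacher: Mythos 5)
Your proposal is correct and follows essentially the same route as the paper: the block decomposition of Lemma~\ref{lm:Psubmat} plus the rescaling \eqref{PLam} yields the perturbed system \eqref{ODELam1N}--\eqref{ODELam4N} with $O(1/N)$ remainders (the paper packages this bridge as Lemma~\ref{lm:solPLam}), necessity is proved by uniform bounds and Arzel\`a--Ascoli, and sufficiency by persistence of the solution of the perturbed system near the limit trajectory (the paper's ``thin tube'' argument, which is the precise form of the continuous-dependence step you invoke, needed because the vector field is only locally Lipschitz through the matrix inverses). No substantive gap.
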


Following the rescaling method in \cite{HY2020b,HZ2020,MH2020},
we define
\begin{align}
 \Lambda_1^N(t)=  \Pi_1^N(t),  \
\Lambda_2^N(t) = N\Pi_2^N(t)  ,  \
 \Lambda_3^N(t) = N^2 \Pi_3^N(t) , \
\Lambda_4^N(t)=N^2\Pi_4^N(t)  . \label{PLam}
\end{align}

We introduce the following ODE system for $(\Lambda_1^N, \cdots, \Lambda_4^N)$:
\begin{align}
& \begin{cases}
   \dot \Lambda_1^N =  \Psi_1(\Lambda_1^N) + g_1^N ,    \\
 \Lambda_1^{N} (T) =   (I - \Gamma_f^T/N ) Q_f  (I - \Gamma_f/N ) , \quad
  \mathcal{R}_1(\Lambda_1^N(t))  >0, \quad \forall  t\in[0, T]  ,
\end{cases} \label{ODELam1N} \\
& \begin{cases}
  \dot\Lambda_2^N = \Psi_2(\Lambda_1^N, \Lambda_2^N)
 + g_2^N  ,    \\
\Lambda^N_2 (T) =  - (I - \Gamma_f^T/N ) Q_f \Gamma_f     ,
\end{cases} \label{ODELam2N} \\
&\begin{cases}
  \dot \Lambda^N_3 = \Psi_3(\Lambda_1^N, \Lambda_2^N, \Lambda_3^N, \Lambda_4^N) + g_3^N  ,  \\
\Lambda_3^N (T) =  \Gamma_f^T Q_f \Gamma_f ,
\end{cases}\label{ODELam3N} \\
& \begin{cases}
  \dot \Lambda_4^N =
 \Psi_4(\Lambda_1^N, \Lambda_2^N,  \Lambda_4^N )
 + g_4^N ,    \\
\Lambda_4^N (T) = \Gamma_f^T Q_f \Gamma_f  ,
\end{cases} \label{ODELam4N}
\end{align}
where  $g_k^N$, $1\leq k \leq 4$, are perturbation terms. We have
\begin{align}
  g_1^N =&  - [ \Lambda_2^N B K^N B^T S_{12}^N
 +  S_{12}^{NT} B K^{NT} B^T  \Lambda_2^{NT}
] (N-1)/N^3   \notag \\
 &
 + [ \Lambda_2^N B H^N B^T \Lambda_2^N  + \Lambda_2^{NT} B H^N B^T \Lambda_2^{NT} ] (N-1)/N^2
 \notag \\
 & - [ S_{12}^{NT} B K^{NT}/N - \Lambda_2^{NT} B H^N ] B_1^T \Lambda_3^N B_1 [ K^N B^T S_{12}^N/N - H^N B^T \Lambda_2^N ]
(N-1)/N^4  \notag \\
& -  [ \Lambda_1^N G + G^T \Lambda_1^N ]/N
- [ \Lambda_2^N G  + G^T  \Lambda_2^{NT} ] (N-1)/N^2
 \notag \\
 & - S_{12}^{NT} B F^N B^T  S_{12}^N /N^2   - ( \Gamma^T Q \Gamma/N - \Gamma^T Q  - Q \Gamma )/N  \notag
\end{align}
and
\begin{align}
& H^N = (R+ B_1^T \Lambda_1^N B_1 )^{-1} ,  \notag \\
&S^N =   \Lambda_1^N + ( \Lambda_2^N + \Lambda_2^{NT} )(N-1)/N
 + [ \Lambda_3^N
  + \Lambda_4^N(N-2)] (N-1)/N^2  , \notag \\
&S_{12}^N =  \Lambda_1^N + \Lambda_2^N (N-1)/N ,  \notag \\
&S_{34}^N =   \Lambda_3^N/N^2 + \Lambda_4^N (N-2)/N^2 , \notag\\
 & K^N = H^N B_0^T S^N B_0 (I +  H^N B_0^T S^N B_0/N )^{-1} H^N , \notag \\
& F^N =  H^N (I +   B_0^T S^N B_0 H^N /N )^{-1}
( B_0^T S^N B_0 + B_0^T S^N B_0 H^N B_0^T S^N B_0 / N^2  )
\cdot \notag \\
&\qquad\quad   (I +  H^N B_0^T S^N B_0/N )^{-1} H^N   . \notag
\end{align}
The other terms $g_k^N$, $k=2,3,4$, are listed in Appendix \ref{appendix:gdN}. They depend on $S_{34}^N$ above.
The mappings $g_k^N$, $1\le k\le 4$, are defined for
$\Lambda^N_k\in \mathcal{S}^n$, $k=1,3,4$, and $\Lambda^N_2\in\mathbb{R}^{n\times n} $. If  \eqref{ODELam1N}--\eqref{ODELam4N} has a solution on $[0,T]$, then $\Lambda^N_k(t)$ is $\mathcal{S}^n$-valued for $k=1, 3, 4$.
The ODE system \eqref{ODELam1N}--\eqref{ODELam4N} is essentially derived from \eqref{ODEP} by use of the new variables \eqref{PLam}. However, \eqref{ODELam1N}--\eqref{ODELam4N} can stand alone without being immediately related to \eqref{ODEP}. If $(\Lambda_1^N, \cdots, \Lambda_4^N)$ is a solution, the inverse
$ (I +  H^N B_0^T S^N B_0/N )^{-1}$ necessarily exists for all $t\in[0, T]$; such a solution is unique.

For
$\Lambda^N_k\in \mathcal{S}^n$, $k=1,3,4$, and $\Lambda^N_2\in\mathbb{R}^{n\times n} $, define the mappings
\begin{align}
\xi(\Lambda_1^N, \Lambda_2^N, \Lambda_3^N, \Lambda^N_4)=& \mathcal{R}_1 ( \Lambda_1^N ) + B_0^T  [\Lambda_1^N + ( \Lambda_2^N + \Lambda_2^{NT} ) (N-1)/N \label{zetaL14}  \\
&\hskip 0cm + \Lambda_3^N(N-1)/N^2
 + \Lambda_4^N (N-1)(N-2)/N^2 ] B_0 /N^2, \nonumber\\
\xi_0(\Lambda_1^N, \Lambda_2^N, \Lambda_3^N, \Lambda^N_4)= & I +  \left( \mathcal{R}_1 ( \Lambda_1^N )  \right)^{-1}
 B_0^T  [\Lambda_1^N + ( \Lambda_2^N + \Lambda_2^{NT} ) (N-1)/N \label{xi0Lam14} \\
&+ \Lambda_3^N(N-1)/N^2 + \Lambda_4^N (N-1)(N-2)/N^2 ] B_0 /N.  \notag
\end{align}
It is easy to show that
\begin{align}\label{IHxi0}
I +  H^N B_0^T S^N B_0/N =  \xi_0(\Lambda_1^N, \Lambda_2^N, \Lambda_3^N, \Lambda^N_4) .
\end{align}

\begin{lemma}
\label{lm:solPLam}
\emph{(i)} Suppose \eqref{ODEP}--\eqref{Pcon} has a solution $(\mathbf{P}_1, \cdots, \mathbf{P}_N)$ on $[0, T]$, and let
$(\Lambda_1^N, \Lambda_2^N, \Lambda_3^N, \Lambda^N_4)$ be defined using \eqref{Psubmat} and \eqref{PLam}. Then $(\Lambda_1^N, \Lambda_2^N,\Lambda_3^N, \Lambda^N_4 )$ satisfies
\eqref{ODELam1N}--\eqref{ODELam4N}.

\emph{(ii)} Conversely, if \eqref{ODELam1N}--\eqref{ODELam4N} admits a solution $(\Lambda_1^N, \Lambda_2^N,\Lambda_3^N, \Lambda^N_4  )$ on $[0, T]$, and such a solution further satisfies
 \begin{align}
\xi(\Lambda_1^N(t), \Lambda_2^N(t), \Lambda_3^N(t), \Lambda^N_4(t)) >0 \nonumber
\end{align}
for all $t\in [0,T]$, then   \eqref{ODEP}--\eqref{Pcon} has a solution $(\mathbf{P}_1, \cdots, \mathbf{P}_N)$ on $[0, T]$. Moreover, $\mathbf{P}_i$ may be determined in terms of  the above $(\Lambda_1^N, \Lambda_2^N,\Lambda_3^N, \Lambda^N_4)$ using \eqref{Psubmat} .
\end{lemma}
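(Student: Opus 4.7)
The plan is to exploit the symmetry structure of Lemma 2.1 to reduce (ODEP)--(Pcon), a system of $N$ matrix ODEs of dimension $Nn\times Nn$, to a coupled system of four $n\times n$ matrix ODEs in the blocks $(\Pi_1^N,\Pi_2^N,\Pi_3^N,\Pi_4^N)$, and then to recognize, under the rescaling (PLam), that this collapsed system is exactly \eqref{ODELam1N}--\eqref{ODELam4N}.

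For part (i), given a solution $(\mathbf{P}_1,\ldots,\mathbf{P}_N)$ of (ODEP)--(Pcon), Lemma \ref{lm:Psubmat} supplies the block form of $\mathbf{P}_1$ with blocks $\Pi_k^N$ and forces $\mathbf{P}_i=J_{1i}^T\mathbf{P}_1 J_{1i}$. The first step is to express each building block of the right-hand side of (ODEP) in terms of the $\Pi_k^N$'s. Using the structure of $\mathbf{B}_k$, $\widehat{\mathbf{B}}_k$, $\mathbf{B}_0$, one checks that $\mathbf{B}_k^T\mathbf{P}_k\mathbf{B}_k=B_1^T\Pi_1^N B_1$ is independent of $k$, so every inverse $(R+\mathbf{B}_k^T\mathbf{P}_k\mathbf{B}_k)^{-1}$ equals $H^N$; that $\mathbf{B}_0^T\mathbf{P}_k\mathbf{B}_0=B_0^T S^N B_0/N^2$, where $S^N$ arises from summing all blocks of $\mathbf{P}_k$; and that $\widehat{\mathbf{B}}_k^T\mathbf{P}_k$ extracts the $k$th block row of $\mathbf{P}_k$ multiplied by $B^T$. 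Substituting these into $\mathbf{M}_0$ and invoking \eqref{IHxi0}, the factor $(I+H^N B_0^T S^N B_0/N)^{-1}=(\xi_0)^{-1}$ emerges. Writing (ODEP) block-by-block for $\mathbf{P}_1$ then yields four $n\times n$ ODEs for $(\Pi_1^N,\Pi_2^N,\Pi_3^N,\Pi_4^N)$, which upon multiplying by $1,N,N^2,N^2$ respectively split into a leading term $\Psi_k$ plus lower-order remainders; these remainders are precisely $g_k^N$. The terminal conditions match by direct block computation of $\mathbf{K}_{if}^TQ_f\mathbf{K}_{if}$, and (Pcon)(i),(iii) translate into $\mathcal{R}_1(\Lambda_1^N)>0$ and invertibility of $\xi_0$.

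For part (ii), one reverses the construction: given $(\Lambda_1^N,\ldots,\Lambda_4^N)$ solving \eqref{ODELam1N}--\eqref{ODELam4N}, define $\Pi_k^N$ by inverting (PLam), assemble $\mathbf{P}_1$ through \eqref{Psubmat}, and set $\mathbf{P}_i=J_{1i}^T\mathbf{P}_1 J_{1i}$. The same block computation as in part (i), read backwards, shows that $(\mathbf{P}_1,\ldots,\mathbf{P}_N)$ satisfies the vector field and terminal condition of (ODEP). Condition (Pcon)(i) is just $\mathcal{R}_1(\Lambda_1^N)>0$; (Pcon)(iii) is \eqref{IHxi0} together with the invertibility of $I+H^N B_0^T S^N B_0/N$ that is implicit in the well-posedness of \eqref{ODELam1N}--\eqref{ODELam4N}; and (Pcon)(ii), namely $R+\mathbf{B}_i^T\mathbf{P}_i\mathbf{B}_i+\mathbf{B}_0^T\mathbf{P}_i\mathbf{B}_0>0$, when expanded in blocks, is precisely the hypothesis $\xi(\Lambda_1^N,\Lambda_2^N,\Lambda_3^N,\Lambda_4^N)>0$. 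Uniqueness is inherited from Remark \ref{rmk:uniqueP}. The main obstacle is the combinatorial bookkeeping for the off-diagonal sums $\sum_{k\neq i}\mathbf{P}_i\widehat{\mathbf{B}}_k(\cdots)\widehat{\mathbf{B}}_k^T\mathbf{P}_i$ and for the quadratic-in-$\mathbf{M}_0$ terms: each such product picks out a specific off-diagonal $n\times n$ block of $\mathbf{P}_i$, so contributions to the $\Pi_3^N$- and $\Pi_4^N$-ODEs carry different combinatorial factors involving $N$, $N-1$, $N-2$, and tracking these is what produces the coefficients $(N-1)/N$, $(N-1)/N^2$, $(N-1)(N-2)/N^2$ appearing in $S^N$, $S_{12}^N$, $\xi_0$, $\xi$ and in the correction terms inside $g_k^N$.
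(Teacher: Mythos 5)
Your proposal is correct and follows essentially the same route as the paper: use the block representation \eqref{Psubmat} to identify $R+\mathbf{B}_i^T\mathbf{P}_i\mathbf{B}_i=\mathcal{R}_1(\Lambda_1^N)$, $\mathbf{B}_0^T\mathbf{P}_k\mathbf{B}_0=B_0^TS^NB_0/N^2$ and $\xi_0$ via \eqref{IHxi0}--\eqref{invP=invLam}, then verify the rescaled block ODEs directly, with \eqref{Pcon}--(ii) matching the hypothesis $\xi>0$ through \eqref{R2(P)=R2(Lam)} in the converse direction. Your write-up merely makes explicit the block/combinatorial bookkeeping that the paper leaves as "straightforward to verify."
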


\begin{proof}
(i) By \eqref{Psubmat} and \eqref{PLam},  we have
\begin{align}
R + \mathbf{B}_i^T  \mathbf{P}_i \mathbf{B}_i
= \mathcal{R}_1 (\Lambda_1^N ) .
\label{R1(P)=R1(Lam)}
\end{align}
By condition \eqref{Pcon}--(i), $R + \mathbf{B}_i^T  \mathbf{P}_i \mathbf{B}_i>0$. Therefore, $ \mathcal{R}_1( \Lambda_1^N(t) )>0$ on $[0, T]$.
It can be shown that
\begin{align}
\label{invP=invLam}
 & I + \sum_{k=1}^N \left( R +   \mathbf{B}_k^T \mathbf{P}_k(t)
\mathbf{B}_k  \right)^{-1} \mathbf{B}_0^T  \mathbf{P}_k(t) \mathbf{B}_0
  =\xi_0(\Lambda_1^N(t), \Lambda_2^N(t), \Lambda_3^N(t), \Lambda^N_4(t)).
\end{align}
We substitute \eqref{Psubmat}  into \eqref{ODEP} and change to the variables $\Lambda_k^N$, $1\le k\le 4$,  to verify the equalities \eqref{ODELam1N}--\eqref{ODELam4N}, for which the inverse   $(I +  H^N B_0^T S^N B_0/N )^{-1}$ exists by condition \eqref{Pcon}--(iii), \eqref{IHxi0} and \eqref{invP=invLam}.

(ii) If \eqref{ODELam1N}--\eqref{ODELam4N} admits a solution $(\Lambda_1^N, \Lambda_2^N,\Lambda_3^N, \Lambda^N_4  )$ on $[0, T]$, let $\mathbf{P}_i$ be defined by \eqref{Psubmat} and \eqref{PLam}.
By $\mathcal{R}_1(\Lambda_1^N)>0$ in \eqref{ODELam1N}, we have
$R+\mathbf{B}_i^T \mathbf{P}_i \mathbf{B}_i >0$.
We can verify
\begin{align}
 & R + \mathbf{B}_i^T \mathbf{P}_i \mathbf{B}_i + \mathbf{B}_0^T \mathbf{P}_i  \mathbf{B}_0= \xi(\Lambda_1^N, \Lambda_2^N, \Lambda_3^N, \Lambda^N_4)   \label{R2(P)=R2(Lam)}
\end{align}
so that  $R + \mathbf{B}_i^T \mathbf{P}_i \mathbf{B}_i + \mathbf{B}_0^T \mathbf{P}_i  \mathbf{B}_0 >0$ for all $t\in [0,T]$.
Note that
$(I +  H^N B_0^T S^N B_0/N )^{-1}$ in \eqref{ODELam1N}--\eqref{ODELam4N}
exists for all $t\in [0,T]$.  Recalling \eqref{xi0Lam14}--\eqref{IHxi0} and \eqref{invP=invLam}, we see that
\begin{align}
  I + \sum_{k=1}^N \left( R +   \mathbf{B}_k^T \mathbf{P}_k(t)
\mathbf{B}_k  \right)^{-1} \mathbf{B}_0^T  \mathbf{P}_k(t) \mathbf{B}_0 \notag
\end{align}
is invertible for all $t\in [0,T]$.
Now it is straightforward to verify that $(\mathbf{P}_1, \cdots, \mathbf{P}_N)$ defined above solves  \eqref{ODEP} subject to \eqref{Pcon}.
\end{proof}

\begin{proof}[Proof of Theorem~\ref{thm:NSAS}]

(i) -- Necessity. Suppose the game \eqref{Xi}--\eqref{Ji} has asymptotic solvability, where $N_0$ and $c_0>0$ have been selected in \eqref{AS2}--\eqref{AS4}.
By Lemma \ref{lm:solPLam}--(i), for all $N\ge N_0$, \eqref{ODELam1N}--\eqref{ODELam4N} has a solution $(\Lambda_1^N, \Lambda_2^N,\Lambda_3^N, \Lambda^N_4  )$  on $[0,T]$, and by \eqref{AS2}--\eqref{AS3} and  \eqref{PLam}, we have
\begin{align}
& \sup_{N\geq N_0} \sup_{0\leq t \leq T}  ( |\Lambda_1^N| +  |\Lambda_2^N| +  |\Lambda_3^N|
+  |\Lambda_4^N| ) < \infty , \label{LamB2} \\
&  \mathcal{R}_1 ( \Lambda_1^N(t) )\ge c_0 I , \quad \forall  t\in [0, T] , \quad \forall  N\geq N_0 .
\label{LamB3}
\end{align}
 We write \eqref{ODELam1N} in the integral form
\begin{align}
& \Lambda_1^N(t) = \Lambda_1^N(T) - \int_t^T [\Psi_1(\Lambda_1^N) + g_1^N] d \tau ,  \notag  
\end{align}
and do the same for $\Lambda_2^N$, $\Lambda_3^N$ and $\Lambda_4^N$. By
\eqref{LamB2}--\eqref{LamB3} we obtain $\sup_{0\le t\le T, k\le 4}|g_k^N| = O(1/N)$.
Then the functions $\{(\Lambda_1^N(\cdot), \Lambda_2^N(\cdot)),\Lambda_3^N(\cdot)), \Lambda_4^N(\cdot)) \}_{N\geq N_0}$ are uniformly bounded and equicontinuous on $[0, T]$.
By Arzel\`{a}--Ascoli theorem~\cite{Y1980}, there exists a subsequence
$\{ ( \Lambda_1^{N_j}(\cdot) , \Lambda_2^{N_j}(\cdot),\Lambda_3^{N_j}(\cdot) ,\Lambda_4^{N_j}(\cdot) ) \}_{j\geq 1}$  that  converges to
$(\Lambda_1^\ast, \Lambda_2^\ast, \Lambda_3^\ast , \Lambda_4^\ast )$ uniformly on $[0, T]$ as $j\to\infty$.
It is easy to see that $(\Lambda_1^\ast, \Lambda_2^\ast, \Lambda_3^\ast, \Lambda_4^\ast )$ solves the system \eqref{ODELam1}--\eqref{ODELam4} and $\mathcal{R}_1 ( \Lambda_1^\ast(t) ) \geq c_0 I$ for all $t\in [0,T]$.

(ii) --
Sufficiency.
Suppose \eqref{ODELam1}--\eqref{ODELam2} has a solution so that we can obtain $(\Lambda_1, \Lambda_2, \Lambda_3, \Lambda_4)$ from
\eqref{ODELam1}--\eqref{ODELam4}.
We proceed to check the solution of \eqref{ODELam1N}--\eqref{ODELam4N}, which now stands alone without using \eqref{ODEP}.
Following the method  in the sufficiency proof of Theorem 3.1 in \cite{HY2020b}, we specify a thin ``tube", surrounding the solution trajectory $(\Lambda_1, \Lambda_2, \Lambda_3, \Lambda_4)$, $t\in [0,T]$, of this form:
\begin{align}
\label{tubeC}
{\cal C}=& \{(t, Z_1, Z_2, Z_3 ,Z_4)\in [0,T]\times \mathcal{S}^n\times
\mathbb{R}^{n\times n}\times \mathcal{S}^n\times \mathcal{S}^n:\\
&\qquad  \mbox{$\sum_{k\le 4}$}|Z_k-\Lambda_k(t)|<\delta_0  \}, \nonumber
\end{align}
where $\delta_0>0$ is a sufficiently small but fixed constant, and next show that
for all sufficiently large $N$, the solution of \eqref{ODELam1N}--\eqref{ODELam4N} starting from the terminal condition will always remain in this tube. This establishes the global existence of  solutions on $[0,T]$,
and the detailed steps are exactly the same as in \cite{HY2020b}.  Specifically,
   it can be shown that there exist $\hat N_0$ and $c_0>0$ such that  we have  the following: (a) \eqref{ODELam1N}--\eqref{ODELam4N}
has a solution $(\Lambda_1^N, \Lambda_2^N,\Lambda_3^N, \Lambda_4^N) $ remaining in the tube \eqref{tubeC} on $[0,T]$ for all $N\ge \hat N_0$; (b)
\begin{align}
& \sup_{N\geq \hat N_0} \sup_{0\leq t \leq T}  ( |\Lambda_1^N| +  |\Lambda_2^N| +  |\Lambda_3^N|
+  |\Lambda_4^N| ) < \infty , \label{LamB2s} \\
&   \mathcal{R}_1 ( \Lambda_1^N(t) ) \ge c_0 I, \quad \forall t\in [0, T] , \quad \forall  N\geq \hat N_0 ;
\label{LamB3s}
\end{align}
(c) for $\xi_0(\cdot)$ defined in \eqref{xi0Lam14},  $\xi_0(\Lambda_1^N(t), \Lambda_2^N(t), \Lambda_3^N(t), \Lambda^N_4(t)) $ is invertible for all $ N\ge \hat N_0$,
so that the term $(I +  H^N B_0^T S^N B_0/N )^{-1}$ in  \eqref{ODELam1N}--\eqref{ODELam4N}
is well defined.

If $\hat N_1>\hat N_0$ is sufficiently large, by \eqref{LamB3s} we can ensure that
\begin{align}\xi(\Lambda_1^N(t), \Lambda_2^N(t), \Lambda_3^N(t), \Lambda^N_4(t))
 >(c_0/2) I, \quad \forall t\in [0,T], \ \forall N\ge \hat N_1, \label{R1c02I}
\end{align}
where  $\xi(\cdot)$ is defined  in \eqref{zetaL14}.
By \eqref{R1c02I}, we apply Lemma \ref{lm:solPLam}--(ii) to obtain $(\mathbf{P}_1, \cdots, \mathbf{P}_N)$ for \eqref{ODEP} whenever $N\ge \hat N_1$. By \eqref{R1c02I} and \eqref{invP=invLam}, we see that \eqref{AS4} holds for all $N\ge \hat N_1$.
Subsequently, asymptotic solvability holds.
\end{proof}

\begin{corollary}
\label{cor:NSAS:LamN-Lam}
If \eqref{ODELam1}--\eqref{ODELam2} has a solution $(\Lambda_1, \Lambda_2)$ on $[0, T]$, then there exists $\hat N_0>0$ such that for each $N\geq \hat N_0$, \eqref{ODELam1N}--\eqref{ODELam4N} has a solution
$(\Lambda_1^N, \Lambda_2^N, \Lambda_3^N, \Lambda_4^N)$ on
$[0, T]$ and moreover,
$ \sup_{t\in[0, T], k\le 4}
  |\Lambda_k^N(t) - \Lambda_k(t)| =O(1/N). $ 
\end{corollary}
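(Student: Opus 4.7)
The plan is to exploit the tube-existence result established inside the proof of Theorem~\ref{thm:NSAS} and then bound the difference of the two trajectories by a Gr\"onwall argument tailored to the perturbation structure of \eqref{ODELam1N}--\eqref{ODELam4N}.

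First I would invoke the sufficiency half of Theorem~\ref{thm:NSAS}: since $(\Lambda_1,\Lambda_2)$ solves \eqref{ODELam1}--\eqref{ODELam2}, the linear equations \eqref{ODELam3}--\eqref{ODELam4} yield $(\Lambda_3,\Lambda_4)$, and there exists $\hat N_0$ such that for every $N\ge \hat N_0$ the system \eqref{ODELam1N}--\eqref{ODELam4N} has a solution $(\Lambda_1^N,\Lambda_2^N,\Lambda_3^N,\Lambda_4^N)$ confined in the tube $\mathcal{C}$ of \eqref{tubeC} together with the uniform lower bound $\mathcal{R}_1(\Lambda_1^N(t))\ge c_0I$. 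In particular, the two trajectories remain in a common compact set on which each mapping $\Psi_k$ is Lipschitz and the matrix $H^N$ (and hence $g_k^N$) is smooth.

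Next I would set $E_k^N:=\Lambda_k^N-\Lambda_k$ for $k=1,\dots,4$ and subtract \eqref{ODELam1}--\eqref{ODELam4} from \eqref{ODELam1N}--\eqref{ODELam4N}. This yields, for each $k$,
\begin{align*}
\dot E_k^N(t)=\bigl[\Psi_k(\Lambda_1^N,\Lambda_2^N,\Lambda_3^N,\Lambda_4^N)-\Psi_k(\Lambda_1,\Lambda_2,\Lambda_3,\Lambda_4)\bigr]+g_k^N(t),
\end{align*}
with terminal values $E_1^N(T)=-\Gamma_f^T Q_f/N-Q_f\Gamma_f/N+\Gamma_f^T Q_f\Gamma_f/N^2$, $E_2^N(T)=\Gamma_f^T Q_f\Gamma_f/N$, and $E_3^N(T)=E_4^N(T)=0$, so that $|E_k^N(T)|=O(1/N)$. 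On the tube, the uniform boundedness of $(\Lambda_k^N,\Lambda_k)$ and of $H^N$ gives a Lipschitz estimate
$\sum_k|\Psi_k(\Lambda^N)-\Psi_k(\Lambda)|\le L\sum_k|E_k^N|$,
and inspection of the explicit formulas for $g_k^N$ (each term carries an explicit factor of $1/N$ after combining the $(N{-}1)/N^{a}$ coefficients with the bounded $\Lambda$-quantities) shows $\sup_{t\in[0,T],\,k\le 4}|g_k^N(t)|=O(1/N)$, as was already noted in the necessity proof of Theorem~\ref{thm:NSAS}.

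Integrating backward from $T$ and summing over $k$ then gives
\begin{align*}
\sum_{k=1}^4|E_k^N(t)|\le \frac{C_1}{N}+\int_t^T\Bigl(L\sum_{k=1}^4|E_k^N(s)|+\frac{C_2}{N}\Bigr)ds,\qquad t\in[0,T],
\end{align*}
and the backward Gr\"onwall inequality yields $\sum_k|E_k^N(t)|\le (C_1+C_2T)e^{LT}/N$, which is the claimed $O(1/N)$ bound. I do not expect any real obstacle here: the tube argument from Theorem~\ref{thm:NSAS} already does the heavy lifting (global existence plus uniform a priori bounds), leaving only a bookkeeping verification that each explicitly listed perturbation $g_k^N$ is $O(1/N)$ on the tube, after which Gr\"onwall finishes the proof.
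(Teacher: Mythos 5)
Your proof is correct and follows essentially the same route as the paper's: invoke the tube argument from the sufficiency proof of Theorem~\ref{thm:NSAS} to get existence of $(\Lambda_1^N,\dots,\Lambda_4^N)$ within the tube together with uniform bounds and $\mathcal{R}_1(\Lambda_1^N)\ge c_0 I$, then apply Gr\"onwall's lemma to the error system. The paper compresses the second step into a single sentence; your write-up merely supplies the bookkeeping it omits (the $O(1/N)$ terminal discrepancies, the Lipschitz bound for the $\Psi_k$ on the tube, and the $O(1/N)$ estimate of the perturbations $g_k^N$), all of which are accurate.
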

\begin{proof}
 Since \eqref{ODELam1}--\eqref{ODELam4} has a solution on $[0, T]$, we take a sufficiently thin tube as in \eqref{tubeC}.
Then by the sufficiency proof of  Theorem~\ref{thm:NSAS}, there exists $\hat N_0>0$ such that for each $N\geq \hat N_0$, \eqref{ODELam1N}--\eqref{ODELam4N} has a solution
$(\Lambda_1^N, \Lambda_2^N, \Lambda_3^N, \Lambda_4^N)$ on
$[0, T]$, which is always within the tube.
The desired result then follows from Gr\"{o}nwall's lemma. See similar estimates  in \cite[Corollary 3.1]{HY2020b}.
\end{proof}

\section{Decentralized strategies}

\label{sec:DecentralStrats}

By Theorem~\ref{thm:NSAS}, the Nash  game \eqref{Xi}--\eqref{Ji} has asymptotic solvability if and only if \eqref{ODELam1}--\eqref{ODELam2} admits a solution $(\Lambda_1, \Lambda_2)$ on $[0, T]$.
We introduce the following assumptions:
\begin{assumption}
\label{assm:solLam}
The ODE system~\eqref{ODELam1}--\eqref{ODELam2} has a solution $(\Lambda_1, \Lambda_2)$ on $[0, T]$.
\end{assumption}

For $X_i(0)$,  denote the covariance matrix
$\Sigma_0^i= \mathbb{E}\{ [X_i(0)-\mathbb{E}X_i(0)] [X_i(0)-\mathbb{E}X_i(0)]^T\}$.

\begin{assumption}
\label{assm:initialX}
The initial states $\{X_i(0), i\geq 0\}$ are independent. There exist  $\mu_0\in\mathbb{R}^n$ and a constant $C_\Sigma$, both independent of $N$, such that $\mathbb{E}X_i(0) = \mu_0$ and
$|\Sigma_0^i| \leq C_\Sigma$ for all $ i$.
\end{assumption}

Under Assumption \ref{assm:solLam}, the sufficiency proof of Theorem \ref{thm:NSAS} shows that there exists $\hat N_1$ such that
\eqref{ODELam1N}--\eqref{ODELam4N} has a solution $(\Lambda_1^N, \Lambda_2^N, \Lambda_3^N, \Lambda_4^N)$  for all $N\ge \hat N_1$. By Lemma~\ref{lm:solPLam}--(ii), we determine $\mathbf{P}$ in~\eqref{ODEP} by using \eqref{Psubmat} and~\eqref{PLam}, and obtain the Nash equilibrium strategies
\eqref{uiP}, which are displayed below:
\begin{align}
  \hat u_i(t) = - [  R + B_1^T  \Lambda_1^N(t) B_1  ]^{-1}
 [ \mathbf{B}_0^T \mathbf{P}_i(t) \mathbf{B}_0  \mathbf{M}_{0}(t)   +  \widehat{\mathbf{B}}_i^T  \mathbf{P}_i(t) ] X(t) ,\  1\leq i \leq N .
\notag
\end{align}
Throughout this section  we assume $N\ge \hat N_1$.
Before further analysis we introduce some notation:
\begin{align}
& \Theta(t) = ( \mathcal{R}_1(\Lambda_1(t)) )^{-1} B^T \Lambda_1(t) ,
\quad \Theta_1(t) = ( \mathcal{R}_1(\Lambda_1(t)) )^{-1} B^T \Lambda_2(t) , \notag \\
&  \widehat{\Theta}(t)  = I_N \otimes  \Theta(t) ,
\quad \widehat{\Theta}_1 = \mathbf{1}_{N\times 1} \otimes \Theta_1 , \notag \\
& \mathbf{e}_i = ( e_i^N  \otimes I_n )^T = (0, \cdots, 0, I_n, 0, \cdots, 0 ) \in \mathbb{R}^{n \times Nn} , \notag \\
& \widehat{\mathbf{B}} = (\widehat{\mathbf{B}}_1, \cdots, \widehat{\mathbf{B}}_N ) \in \mathbb{R}^{Nn\times N n_1}, \quad
\mathbf{I} = (I_n, \cdots, I_n) \in \mathbb{R}^{n\times Nn} . \notag
\end{align}
By using the closed-loop dynamics under $(\hat u_1, \cdots, \hat u_N)$, we consider the SDE of $X^{(N)}$ and let $N\to \infty$.
This gives
the mean field limit state $\overline{X}$ as follows:
\begin{align}
\label{dbarX}
 & d \overline{X} =   ( A + G - B( \Theta + \Theta_1 ) ) \overline{X}   dt - B_0 (\Theta + \Theta_1 ) \overline{X}   d W_0 , \quad
t \geq 0 ,
\end{align}
where $\overline{X}(0) = \mu_0 $.
We denote the set of decentralized feedback strategies
\begin{align}
 \check{u}_i (t)
 = - \Theta(t) \mathbf{e}_i X(t) - \Theta_1(t) \overline{X}(t) , \quad
1 \leq i \leq N .
\label{checkui}
\end{align}

The state dynamics under the decentralized strategies \eqref{checkui} follows
\begin{align}
d X(t)  = &   ( \mathbf{A} X
 - \widehat{\mathbf{B}}( \widehat\Theta X + \widehat\Theta_1 \overline{X} ) )  dt
 - \sum_{i=1}^N   \mathbf{B}_i (\Theta X_i + \Theta_1 \overline{X})   d W_i  \notag \\
 &   - \mathbf{B}_0 \sum_{i=1}^N (\Theta X_i + \Theta_1 \overline{X}  )   d W_0 , \quad t\geq 0 , \notag
\end{align}
where the initial state $X(0)=(X_1^T(0), \cdots, X_N^T(0))^T$ is the same as in  \eqref{X}.

Below we evaluate the cost with more general initial conditions.
When all the $N$ players take the decentralized strategies \eqref{checkui}, the cost of  player $\mathcal{A}_i$ with initial condition $(X(t), \overline{X}(t))=(\mathbf{x}, \bar{x})$ is denoted by $\check{V}_i(t, \mathbf{x}, \bar{x})$, $t\in[0, T]$, $\mathbf{x} \in \mathbb{R}^{Nn}$, $\bar{x} \in \mathbb{R}^n$.
The Feynman--Kac formula~\cite[Sec. 1.3, 3.5]{P2009} gives the following  equation that $\check{V}_i$ satisfies:
\begin{align}
\label{HJBcheckV}
\begin{cases}
- \frac{\partial \check{V}_i}{\partial t}
 =  \frac{\partial^T \check{V}_i}{\partial \mathbf{x}}  ( \mathbf{A} \mathbf{x}
 - \widehat{\mathbf{B}}( \widehat\Theta \mathbf{x} + \widehat\Theta_1 \bar{x}  ) )
 + \frac{\partial^T \check{V}_i}{\partial \bar{x}}  (A+G - B( \Theta + \Theta_1 ) ) \bar{x}    \\
\hspace{1.3cm}
+ (1/2) ( \Theta \mathbf{I} \mathbf{x} + \mathbf{I} \widehat\Theta_1 \bar{x}   )^T
 \mathbf{B}_0^T \frac{\partial^2 \check{V}_i}{\partial \mathbf{x}^2 }
 \mathbf{B}_0 ( \Theta \mathbf{I} \mathbf{x} + \mathbf{I} \widehat\Theta_1 \bar{x}  )     \\
\hspace{1.3cm}   + (1/2) (  ( \Theta + \Theta_1 ) \bar{x} )^T  B_0^T\frac{\partial^2 \check{V}_i}{\partial \bar{x}^2}   B_0
( \Theta + \Theta_1 ) \bar{x}   \\
 \hspace{1.3cm}    + (1/2) \sum_{k=1}^N (  \Theta \mathbf{e}_k \mathbf{x} + \Theta_1 \bar{x}    )^T \mathbf{B}_k^T \frac{\partial^2 \check{V}_i}{\partial \mathbf{x}^2}
  \mathbf{B}_k (\Theta \mathbf{e}_k \mathbf{x} + \Theta_1 \bar{x} )    \\
\hspace{1.3cm}
+   (   \Theta \mathbf{I} \mathbf{x} + \mathbf{I} \widehat\Theta_1 \bar{x}  )^T
 \mathbf{B}_0^T \frac{\partial^2 \check{V}_i }{ \partial \mathbf{x} \partial \bar{x}}   B_0 ( \Theta + \Theta_1 ) \bar{x} \\
\hspace{1.3cm}  + ( \Theta \mathbf{e}_i \mathbf{x} + \Theta_1 \bar{x}  )^T R
 ( \Theta \mathbf{e}_i \mathbf{x} + \Theta_1 \bar{x}  )
+ \mathbf{x}^T \mathbf{Q}_i \mathbf{x} ,  \\
 \check{V}_i ( T, \mathbf{x} ) =  \mathbf{x}^T \mathbf{Q}_{if} \mathbf{x} .
\end{cases}
\end{align}
Assume $\check{V}_i(t, \mathbf{x}, \overline{x})$ takes the following form
\begin{align}
\label{checkVansatz}
 \check{V}_i(t, \mathbf{x} , \overline{x}) = \mathbf{x}^T
\check{\mathbf{P}}^i_1(t) \mathbf{x} + 2 \mathbf{x}^T
\check{\mathbf{P}}^i_{12} (t) \overline{x}  + \overline{x}^T
\check{\mathbf{P}}_2^i (t) \overline{x}  , \quad 1\leq i \leq N .
\end{align}
Substituting \eqref{checkVansatz} into \eqref{HJBcheckV}
gives the following system of ODEs for
$\check{\mathbf{P}}_1^i$, $\check{\mathbf{P}}_{12}^i$
and $\check{\mathbf{P}}_2^i$:
\begin{align}
& \begin{cases}
- \frac{d}{dt}\check{\mathbf{P}}_1^i =  \check{\mathbf{P}}_1^i
(\mathbf{A} -  \widehat{\mathbf{B}} \widehat\Theta )
+ ( \mathbf{A}   - \widehat{\mathbf{B}}  \widehat\Theta  )^T  \check{\mathbf{P}}_1^i
 + \mathbf{I}^T \Theta^T \mathbf{B}_0^T \check{\mathbf{P}}^i_1 \mathbf{B}_0 \Theta \mathbf{I}        \\
\hspace{1.55cm}
+ \sum_{k=1}^N \mathbf{e}_k^T \Theta^T \mathbf{B}_k^T
 \check{\mathbf{P}}_1^i \mathbf{B}_k \Theta \mathbf{e}_k
 + \mathbf{e}_i^T \Theta^T R \Theta \mathbf{e}_i + \mathbf{Q}_i ,   \\
\check{\mathbf{P}}_1^i (T) =  \mathbf{Q}_{if}  ,
\end{cases} \label{ODEcheckP1} \\
& \begin{cases}
 -  \frac{d}{dt}\check{\mathbf{P}}_{12}^i =  -  \check{\mathbf{P}}_1^i \widehat{\mathbf{B}} \widehat\Theta_1 + (\mathbf{A} - \widehat{\mathbf{B}} \widehat\Theta )^T \check{\mathbf{P}}_{12}^i
 + \check{\mathbf{P}}_{12}^i (A+G - B(\Theta + \Theta_1 ) )   \\
\hspace{1.7cm}
 + \mathbf{I}^T \Theta^T \mathbf{B}_0^T \check{\mathbf{P}}_1^i
 \mathbf{B}_0 \mathbf{I}\widehat\Theta_1
 + \sum_{k=1}^N \mathbf{e}_k^T \Theta^T \mathbf{B}_k^T
 \check{\mathbf{P}}_1^i \mathbf{B}_k \Theta_1  \\
\hspace{1.7cm}
 + \mathbf{I}^T \Theta^T \mathbf{B}_0^T \check{\mathbf{P}}_{12}^i B_0 ( \Theta + \Theta_1 )
 + \mathbf{e}_i^T \Theta^T R \Theta_1 ,  \\
\check{\mathbf{P}}_{12}^i(T) = 0 ,
\end{cases} \label{ODEcheckP12} \\
&\begin{cases}
 - \frac{d}{dt} \check{\mathbf{P}}_2^i =  - \check{\mathbf{P}}_{12}^{iT} \widehat{\mathbf{B}} \widehat\Theta_1 - \widehat\Theta_1^T \widehat{\mathbf{B}}^T \check{\mathbf{P}}_{12}^i
 + \check{\mathbf{P}}_2^i (A+G - B(\Theta + \Theta_1)) \\
\hspace{1.55cm}
+ (A+G - B(\Theta + \Theta_1))^T \check{\mathbf{P}}_2^i
 + ( \mathbf{B}_0 \mathbf{I} \widehat\Theta_1 )^T \check{\mathbf{P}}_1^i
 \mathbf{B}_0 \mathbf{I}\widehat\Theta_1    \\
\hspace{1.55cm}
+ \sum_{k=1}^N (\mathbf{B}_k \Theta_1 )^T \check{\mathbf{P}}_1^i \mathbf{B}_k \Theta_1
 + (\Theta + \Theta_1 )^T B_0^T \check{\mathbf{P}}_2^i B_0 (\Theta + \Theta_1 )  \\
\hspace{1.55cm}
  + (\mathbf{B}_0  \mathbf{I} \widehat{\Theta}_1 )^T \check{\mathbf{P}}_{12}^i
B_0 (\Theta + \Theta_1 )
 + ( B_0(\Theta + \Theta_1 ) )^T  \check{\mathbf{P}}_{12}^{iT}
 \mathbf{B}_0 \mathbf{I} \widehat{\Theta}_1   \\
\hspace{1.55cm}
+ \Theta_1^T R \Theta_1 , \\
\check{\mathbf{P}}^i_2(T) = 0 .
\end{cases} \label{ODEcheckP2}
\end{align}

\begin{remark}
\label{rmk:checkPsol}
 \eqref{ODEcheckP1}--\eqref{ODEcheckP2} is a first order linear ODE system and  admits a unique solution.
\end{remark}

We have the following submatrix partition of the matrices
$\check{\mathbf{P}}_1^i$, $\check{\mathbf{P}}_{12}^i$, $1\leq i\leq N$.
\begin{lemma}
\label{lm:checkPsubmat}
For \eqref{ODEcheckP1} and \eqref{ODEcheckP12}, the solution
$(\check{\mathbf{P}}^i_{1}, \check{\mathbf{P}}^i_{12})$,
$1\leq i \leq N$, has the representation
\begin{align}
\label{checkP1submat}
& \check{\mathbf{P}}^1_1
 = \begin{bmatrix}
\check\Pi_1^N & \check\Pi_2^N & \check\Pi_2^N & \cdots & \check\Pi_2^N \\
 \check\Pi_2^{NT} & \check\Pi_3^N & \check\Pi_4^N & \cdots & \check\Pi_4^N \\
\check\Pi_2^{NT} & \check\Pi_4^N & \check\Pi_3^N & \cdots & \check\Pi_4^N \\
\vdots & \vdots & \vdots & \ddots & \vdots \\
 \check\Pi_2^{NT} & \check\Pi_4^N & \check\Pi_4^N & \cdots & \check\Pi_3^N \\
\end{bmatrix} ,
\qquad
 \check{\mathbf{P}}_1^i = J_{1i}^T \check{\mathbf{P}}_1^1 J_{1i},
\quad \forall  2\leq i \leq N ,  \\
& \check{\mathbf{P}}_{12}^1 = \begin{bmatrix} \check{\Pi}_{11}^{NT} , \ \check{\Pi}_{12}^{NT} ,\ \cdots , \ \check{\Pi}_{12}^{NT}  \end{bmatrix}^T , \quad
\check{\mathbf{P}}_{12}^i = J_{1i}^T \check{\mathbf{P}}_{12}^1 ,
\quad \forall  2\leq i \leq N ,
\label{checkP12submat}
\end{align}
where $\check{\Pi}_1^N(t)$, $\check{\Pi}_3^N(t)$, $\check{\Pi}_4^N(t)\in \mathcal{S}^n$, and
$\check{\Pi}_2^N(t)$, $\check{\Pi}_{11}^N(t)$, $\check{\Pi}_{12}^N(t)\in \mathbb{R}^{n\times n}$.
\end{lemma}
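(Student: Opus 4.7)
The plan is to mimic the proof of Lemma \ref{lm:Psubmat} by exploiting permutation symmetries of the ODE system \eqref{ODEcheckP1}--\eqref{ODEcheckP12}, combined with the uniqueness of solutions noted in Remark \ref{rmk:checkPsol}. For any permutation $\sigma$ of $\{1,\ldots,N\}$, let $J_\sigma\in\mathbb{R}^{Nn\times Nn}$ denote the associated block-permutation matrix (so that $J_{1i}$ in the statement is the transposition swapping players $1$ and $i$); in particular $J_\sigma$ is orthogonal and $J_\sigma^T=J_{\sigma^{-1}}$.

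First I would catalog the invariance properties of the coefficients. Writing $\mathbf{A}=\diag[A,\ldots,A]+\mathbf{1}_{N\times N}\otimes (G/N)$, $\widehat{\mathbf{B}}\widehat\Theta=\diag[B\Theta,\ldots,B\Theta]$, $\widehat{\mathbf{B}}\widehat\Theta_1=\mathbf{1}_{N\times 1}\otimes(B\Theta_1)$, $\mathbf{B}_0\Theta\mathbf{I}=\mathbf{1}_{N\times N}\otimes(B_0\Theta/N)$, and $\mathbf{B}_0=\mathbf{1}_{N\times 1}\otimes(B_0/N)$, one verifies that $J_\sigma$ commutes with $\mathbf{A}-\widehat{\mathbf{B}}\widehat\Theta$, that $J_\sigma^T\mathbf{B}_0=\mathbf{B}_0$, $J_\sigma^T\widehat{\mathbf{B}}\widehat\Theta_1=\widehat{\mathbf{B}}\widehat\Theta_1$, $\mathbf{I}J_\sigma=\mathbf{I}$, and (the key player-indexed identities) $\mathbf{e}_k J_\sigma=\mathbf{e}_{\sigma(k)}$, $J_\sigma^T\mathbf{B}_k=\mathbf{B}_{\sigma(k)}$, and $\mathbf{K}_k J_\sigma=\mathbf{K}_{\sigma(k)}$, hence $J_\sigma^T\mathbf{Q}_k J_\sigma=\mathbf{Q}_{\sigma(k)}$. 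Now for any $\sigma$ fixing $1$, conjugating \eqref{ODEcheckP1} with $i=1$ by $J_\sigma$ sends $\check{\mathbf{P}}_1^1$ to $J_\sigma^T\check{\mathbf{P}}_1^1 J_\sigma$ and leaves every right-hand-side term unchanged (the sum $\sum_k\mathbf{e}_k^T\Theta^T\mathbf{B}_k^T(\cdot)\mathbf{B}_k\Theta\mathbf{e}_k$ is merely reindexed by $\sigma$), with terminal condition $\mathbf{Q}_{1f}$ also invariant. Uniqueness then forces $J_\sigma^T\check{\mathbf{P}}_1^1 J_\sigma=\check{\mathbf{P}}_1^1$ for every such $\sigma$, which is exactly the block pattern displayed in \eqref{checkP1submat}, with symmetry of $\check\Pi_1^N,\check\Pi_3^N,\check\Pi_4^N$ inherited from $\check{\mathbf{P}}_1^1=(\check{\mathbf{P}}_1^1)^T$.

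For the formula $\check{\mathbf{P}}_1^i=J_{1i}^T\check{\mathbf{P}}_1^1 J_{1i}$ with $i\ge 2$, I would set $\widetilde M:=J_{1i}^T\check{\mathbf{P}}_1^1 J_{1i}$ and verify that $\widetilde M$ satisfies the ODE for $\check{\mathbf{P}}_1^i$: under conjugation by $J_{1i}$ the only terms that change are $\mathbf{Q}_1\mapsto \mathbf{Q}_i$ and $\mathbf{e}_1^T\Theta^T R\Theta\mathbf{e}_1\mapsto \mathbf{e}_i^T\Theta^T R\Theta\mathbf{e}_i$ (via $\mathbf{K}_1 J_{1i}=\mathbf{K}_i$, $\mathbf{e}_1 J_{1i}=\mathbf{e}_i$), while the sum $\sum_k(\cdots)$ is reindexed into itself; the terminal condition transforms as $J_{1i}^T\mathbf{Q}_{1f}J_{1i}=\mathbf{Q}_{if}$. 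The analogous argument for $\check{\mathbf{P}}_{12}^i$ uses left multiplication by $J_\sigma^T$ rather than conjugation: for $\sigma$ fixing $1$, multiplying \eqref{ODEcheckP12} with $i=1$ on the left by $J_\sigma^T$ and inserting $J_\sigma J_\sigma^T=I$ where needed shows that $J_\sigma^T\check{\mathbf{P}}_{12}^1$ solves the same ODE (using the invariance of $\mathbf{I}^T$, $\mathbf{B}_0$, and $\widehat{\mathbf{B}}\widehat\Theta_1$, together with the step-2 identity for $\check{\mathbf{P}}_1^1$), yielding the block-row pattern \eqref{checkP12submat}; for $i\ge 2$, using the transposition $J_{1i}$ and the already-proved identity $\check{\mathbf{P}}_1^i=J_{1i}^T\check{\mathbf{P}}_1^1 J_{1i}$ shows that $J_{1i}^T\check{\mathbf{P}}_{12}^1$ solves the ODE for $\check{\mathbf{P}}_{12}^i$.

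The only real obstacle is bookkeeping: the summation $\sum_{k=1}^N\mathbf{e}_k^T\Theta^T\mathbf{B}_k^T\check{\mathbf{P}}_1^{(\cdot)}\mathbf{B}_k\Theta\mathbf{e}_k$ (and its analogue in \eqref{ODEcheckP12}) couples the player index $k$ both to the basis vectors $\mathbf{e}_k,\mathbf{B}_k$ and, through $\check{\mathbf{P}}_1^{(\cdot)}$, to the full matrix being transformed. One must observe that the $k$th summand is a block matrix supported on the $(k,k)$ block alone, so the permutation $\sigma$ simply reindexes these summands, keeping the sum invariant once $J_\sigma^T\check{\mathbf{P}}_1^{(\cdot)}J_\sigma$ is accounted for. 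Once this verification is in hand, all other steps are straightforward applications of the invariance identities listed above and the uniqueness of solutions to a linear matrix ODE.
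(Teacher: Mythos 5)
Your proposal is correct and follows essentially the same route as the paper, which omits the proof by pointing to Lemma \ref{lm:Psubmat}: conjugate (resp.\ left-multiply) the linear ODE system by block-permutation matrices, check invariance of all coefficient terms including the player-indexed sums, and invoke uniqueness from Remark \ref{rmk:checkPsol}. Your explicit handling of the $\sum_k \mathbf{e}_k^T\Theta^T\mathbf{B}_k^T(\cdot)\mathbf{B}_k\Theta\mathbf{e}_k$ term as a reindexed sum of $(k,k)$-block-supported matrices is exactly the bookkeeping the paper leaves implicit.
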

\begin{proof}
The proof is similar to that of Lemma~\ref{lm:Psubmat} or \cite[Theorem 3]{HZ2020} and is omitted.
\end{proof}

We define the new variables:
\begin{align}
\begin{cases}
   \check\Lambda_1^N = \check\Pi_1^N , \hspace{0.4cm}
    \check\Lambda_2^N =N\check\Pi_2^N   , \hspace{0.42cm}
    \check\Lambda_3^N =  N^2\check\Pi_3^N  , \hspace{0.4cm}
   \check\Lambda_4^N =N^2\check\Pi_4^N   ,  \\
 \check\Lambda_{11}^N =\check\Pi_{11}^N  ,
\quad
   \check\Lambda_{12}^N  =  N\check\Pi_{12}^N  ,
\quad
   \check\Lambda_{22}^N = \check{\mathbf{P}}_2^i.
\end{cases} \label{checkPLam}
\end{align}

Substituting~\eqref{checkP1submat}--\eqref{checkP12submat} into \eqref{HJBcheckV}
and next converting into the new variables in  \eqref{checkPLam}, we derive
\begin{align}
& \begin{cases}
 - \frac{d}{dt}\check{\Lambda}_1^N = \check\Lambda_1^N ( A - B \Theta )
+ ( A  - B \Theta )^T \check\Lambda_1^N
 + \Theta^T ( R + B_1^T \check\Lambda_1^N B_1 ) \Theta  \\
\hspace{1.5cm}  + Q + \check{g}_1^N ,  \\
\check\Lambda_1^N(T) = (I - \Gamma_f^T/N) Q_f (I - \Gamma_f/N) ,
\end{cases}  \label{ODEcheckLam1} \\
& \begin{cases}
  - \frac{d}{dt}\check\Lambda_2^N  =  \check\Lambda_1^N G
 + \check\Lambda_2^N ( A + G - B \Theta )
+ ( A  - B \Theta )^T \check\Lambda_2^N  - Q \Gamma
 + \check{g}_2^N , \\
\check\Lambda_2^N (T) =   - (I - \Gamma_f^T/N) Q_f \Gamma_f  ,
\end{cases} \label{ODEcheckLam2}
\end{align}
\begin{align}
& \begin{cases}
 - \frac{d}{dt}\check\Lambda_3^N =
 \Theta^T [ B_1^T \check\Lambda_3^N B_1 + B_0^T (\check\Lambda_1^N + \check\Lambda_2^N
+ \check\Lambda_2^{NT} + \check\Lambda_4^N ) B_0 ] \Theta \\
\hspace{1.5cm}
 + ( \check\Lambda_2^{NT}
+ \check\Lambda_4^N ) G
+ G^T ( \check\Lambda_2^N +  \check\Lambda_4^N )
+ \check\Lambda_3^N (A  -  B \Theta )   \\
\hspace{1.5cm}
+ ( A  -  B \Theta )^T \check\Lambda_3^N
+ \Gamma^T Q \Gamma + \check{g}_3^N ,  \\
\check\Lambda_3^N (T) = \Gamma_f^T Q_f \Gamma_f  ,
\end{cases} \label{ODEcheckLam3}
\end{align}
\begin{align}
& \begin{cases}
 - \frac{d}{dt}\check\Lambda_4^N =   \check\Lambda_2^{NT} G
+ G^T \check\Lambda_2^N + \check\Lambda_4^N (A + G - B \Theta ) + (A + G   - B \Theta  )^T \check\Lambda_4^N   \\
\hspace{1.65cm}
+ \Theta^T  B_0^T (\check\Lambda_1^N + \check\Lambda_2^N
+ \check\Lambda_2^{NT} + \check\Lambda_4^N ) B_0  \Theta
 + \Gamma^T Q \Gamma  + \check{g}_4^N   ,   \\
\check\Lambda_4^N(T) = \Gamma_f^T Q_f \Gamma_f ,
\end{cases} \label{ODEcheckLam4}
\end{align}
\begin{align}
& \begin{cases}
 - \frac{d}{dt}\check\Lambda_{11}^N =
  \check{\Lambda}_{11}^N (A+G - B(\Theta + \Theta_1 ))
+ ( A  - B \Theta )^T \check{\Lambda}_{11}^N \\
\hspace{1.6cm}
- ( \check\Lambda_1^N
 + \check\Lambda_2^N ) B \Theta_1
+ \Theta^T ( R + B_1^T \check\Lambda_1^N B_1 ) \Theta_1
  + \check{g}_{11}^N     ,   \\
 \check\Lambda_{11}^N(T) = 0 ,
\end{cases} \label{ODEcheckLam11}
\end{align}
\begin{align}
& \begin{cases}
 - \frac{d}{dt}\check\Lambda_{12}^N
=  - ( \check\Lambda_2^{NT}  +  \check\Lambda_4^N  ) B \Theta_1
 +  ( A + G  - B \Theta   )^T \check\Lambda_{12}^N
 + G^T \check\Lambda_{11}^N    \\
\hspace{1.65cm}
 + \check\Lambda_{12}^N  (A+G - B (\Theta + \Theta_1 ) ) \\
\hspace{1.65cm}
 + \Theta^T B_0^T ( \check\Lambda_1^N
 + \check\Lambda_2^N + \check\Lambda_2^{ NT}
 + \check\Lambda_4^N ) B_0 \Theta_1     \\
\hspace{1.65cm}
 + \Theta^T B_0^T (\check\Lambda_{11}^N
 +  \check\Lambda_{12}^N  ) B_0 (\Theta + \Theta_1 )
 + \check{g}_{12}^N , \\
\check\Lambda_{12}^N(T) = 0 ,
\end{cases}  \label{ODEcheckLam12}
\end{align}
\begin{align}
& \begin{cases}
 - \frac{d}{dt}\check\Lambda_{22}^N =
- ( \check{\Lambda}_{11}^N + \check{\Lambda}_{12}^N
 + \check\Lambda_{22}^N )^T B \Theta_1
 - \Theta_1^T B^T  ( \check\Lambda_{11}^N
 +  \check\Lambda_{12}^N + \check\Lambda_{22}^N )  \\
\hspace{1.65cm}
 + \check\Lambda_{22}^N (A+G - B \Theta )
 + (A+G - B \Theta )^T \check\Lambda_{22}^N     \\
 \hspace{1.65cm}
 + \Theta^T B_0^T \check\Lambda_{22}^N B_0 \Theta
+ \Theta^T B_0^T ( \check\Lambda_{11}^N
 + \check\Lambda_{12}^N
+ \check\Lambda_{22}^N )^T B_0 \Theta_1 \\
\hspace{1.65cm}
 + \Theta_1^T B_0^T ( \check\Lambda_{11}^N
 + \check\Lambda_{12}^N + \check\Lambda_{22}^N ) B_0 \Theta
+  \Theta_1^T ( R + B_1^T \check\Lambda_1^N  B_1 ) \Theta_1 \\
\hspace{1.65cm}
  + \Theta_1^T B_0^T (\check\Lambda_1^N + \check\Lambda_2^N
+ \check\Lambda_2^{NT} + \check\Lambda_4^N \\
\hspace{1.65cm}
+ \check\Lambda_{11}^N + \check\Lambda_{12}^N
+ \check\Lambda_{11}^{NT} +  \check\Lambda_{12}^{NT}
+ \check\Lambda_{22}^N ) B_0 \Theta_1
 + \check{g}_{22}^N  ,\\
\check\Lambda_{22}^N(T) = 0 .
\end{cases} \label{ODEcheckLam22}
\end{align}
The perturbation terms $\check{g}^N_1,\cdots, \check{g}^N_{22}$ are
listed in Appendix \ref{appendix:gdN}.

\begin{remark}
\label{rmk:solcheckLam}
Under Assumption~\ref{assm:solLam}, the system~\eqref{ODEcheckLam1}--\eqref{ODEcheckLam22} is a first order linear ODE system and admits a unique solution $(\check\Lambda_1^N, \cdots, \check\Lambda_{22}^N)$ on $[0, T]$.
\end{remark}

\begin{remark}
\label{rmk:bdcheckLam}
Let $\psi^N$ stand for any of the functions $\check\Lambda_1^N$, $\check\Lambda_2^N$, $\check\Lambda_3^N$, $\check\Lambda_4^N$,
$\check\Lambda_{11}^N$, $\check\Lambda_{12}^N$ and $\check\Lambda_{22}^N$. Due to the bounded coefficients in the ODE system
\eqref{ODEcheckLam1}--\eqref{ODEcheckLam22}, $\sup_{N\geq \hat{N}_1, 0 \leq t \leq T} |\psi^N| \leq C$ for some fixed constant $C$.
\end{remark}

\begin{remark}
\label{rmk:checkgO(1/N)}
Let $h^N$ stand for any of the functions $\check{g}_1^N$, $\check{g}_2^N$, $\check{g}_3^N$, $\check{g}_4^N$, $\check{g}_{11}^N$, $\check{g}_{12}^N$ and $\check{g}_{22}^N$. Then $\sup_{t\in [0, T]} |h^N(t)|=O(1/N)$.
\end{remark}

Let $(\check\Lambda_1^N, \cdots, \check\Lambda_{22}^N)$ be obtained from \eqref{ODEcheckLam1}--\eqref{ODEcheckLam22}.
 By substituting \eqref{checkP1submat} into \eqref{checkVansatz}, which is further expressed in terms of $(\check\Lambda_1^N, \cdots, \check\Lambda_{22}^N)$ via \eqref{checkPLam},    we obtain an explicit representation of a player's cost when all the players take the set of decentralized strategies
$(\check u_1, \cdots, \check u_N)$  in \eqref{checkui}.
The cost of player $\mathcal{A}_i$ is
\begin{align}
   J_i(\check{u}_i, \check{u}_{-i})
=& \mathbb{E}\Big[  \check{V}_i(0, X(0), \overline{X}(0))
  \Big]   \notag \\
= & \mathbb{E} \Big[ X^T(0)   \check{\mathbf{P}}_1^i(0)  X(0)
 + 2 X^T(0)  \check{\mathbf{P}}_{12}^i(0)  \overline{X}(0)
 + \overline{X}^T(0)  \check{\mathbf{P}}_2^i(0)  \overline{X}(0) \Big] .  \label{checkJiP}
\end{align}
Denote $\mathcal{N}_{-i}=\{1, \cdots, N\}\setminus\{i\}$.
Under Assumption \ref{assm:initialX}, the first term on the right hand side of \eqref{checkJiP} is
\begin{align}
& \mathbb{E} \Big[ X^T(0)  \check{\mathbf{P}}_1^i(0)  X(0) \Big]  \notag \\
= &  \mathbb{E}\Big[ X_i^T(0)  \check\Lambda_1^N (0) X_i(0)
+ (2/N) \sum_{j\in \mathcal{N}_{-i} } X_i^T(0) \check\Lambda_2^N(0) X_j(0)
 \notag \\
& + \frac{1}{N^2}\sum_{j\in \mathcal{N}_{-i}  } X_j^T(0) \check\Lambda_3^N(0) X_j(0)
 + \frac{1}{N^2}\sum_{{ j, k \in \mathcal{N}_{-i},  j \neq k  } } X_j^T(0) \check\Lambda_4^N(0)  X_k(0) \Big] \notag \\
= &  \Tr[  \check\Lambda_1^N(0) \Sigma_0^i ]
 + (1/N^2) \sum_{j\in \mathcal{N}_{-i}  } \Tr [ \check\Lambda_3^N(0) \Sigma_0^j]   \notag \\
& + \mu_0^T [   \check\Lambda_1^N(0) + \check\Lambda_2^N(0) + \check\Lambda_2^{NT}(0) + \check\Lambda_4^N(0) ] \mu_0 \notag  \\
& + \mu_0^T [  - (\check\Lambda_2^N(0) + \check\Lambda_2^{NT}(0)) /N + \check\Lambda_3^N(0) (N-1)/N^2   \notag \\ &\qquad
+ \check\Lambda_4^N(0)  (2-3N)/N  ]  \mu_0.
\label{XcheckP1}
\end{align}
The second term on the right hand side of \eqref{checkJiP} can be written as
\begin{align}
 & 2 \mathbb{E} \Big[  X^T(0)  \check{\mathbf{P}}_{12}^i(0)  \overline{X}(0) \Big] \label{XcheckP12}  \\
=&  \mu_0^T [  \check\Lambda_{11}^{N}(0)
 +   \check\Lambda_{11}^{NT}(0)
 + \check\Lambda_{12}^{N}(0) + \check\Lambda_{12}^{NT}(0)  ] \mu_0
  - \mu_0^T [ \check\Lambda_{12}^N(0)
+ \check\Lambda_{12}^{NT}(0)   ] \mu_0 /N .  \notag
\end{align}
The third term on the right hand side of \eqref{checkJiP} can be written as
\begin{align}
 \mathbb{E} [ \overline{X}^T(0)  \check{\mathbf{P}}_2^i(0)  \overline{X}(0) ]
 = \mu_0^T  \check\Lambda_{22}^N(0)  \mu_0 .  \label{XcheckP2}
\end{align}
Denote
\begin{align}
& \check{Y}^N :=   \check\Lambda_1^N + \check\Lambda_2^N
 + \check\Lambda_2^{NT}  + \check\Lambda_4^N
+ \check\Lambda_{11}^N
+ \check\Lambda_{11}^{NT} + \check\Lambda_{12}^N
+ \check\Lambda_{12}^{NT}
+ \check\Lambda_{22}^N .
\label{checkYN}
\end{align}
Substituting \eqref{XcheckP1}, \eqref{XcheckP12} and \eqref{XcheckP2} into \eqref{checkJiP} gives
\begin{align}
   J_i(\check{u}_i, \check{u}_{-i})
=& \mu_0^T   \check{Y}^N(0)  \mu_0
+  \Tr[  \check\Lambda_1^N(0) \Sigma_0^i ]
 + (1/N^2) \sum_{j\in \mathcal{N}_{-i} } \Tr [ \check\Lambda_3^N(0) \Sigma_0^j]   \notag \\
& + \mu_0^T \{  - (\check\Lambda_2^N(0) + \check\Lambda_2^{NT}(0))/N  +  \check\Lambda_3^N(0) (N-1)/N^2 \notag \\
 & +  \check\Lambda_4^N(0)  (2-3N)/N^2
  - (   \check\Lambda_{12}^N(0)  + \check\Lambda_{12}^{NT}(0)  ) /N
\}  \mu_0 .   \label{checkJiLam}
\end{align}

\section{Decentralized  $O(1/N)$-Nash equilibrium strategies}
\label{sec:O(1/N)Nash}

In this section we show that the set of decentralized strategies in
\eqref{checkui} has an $O(1/N)$-Nash equilibrium property. More precisely, when the game \eqref{Xi}--\eqref{Ji} is asymptotically solvable and all  other players take the decentralized strategies \eqref{checkui}, the extra benefit that a player obtains by unilaterally deviating from the strategy
\eqref{checkui} is at most $O(1/N)$.
\begin{theorem}
\label{thm:O(1/N)Nash}
Under Assumptions \ref{assm:solLam} and \ref{assm:initialX},
the set of decentralized strategies $(\check{u}_1, \cdots, \check{u}_N)$  given by \eqref{checkui} is an $O(1/N)$-Nash equilibrium  of the Nash game \eqref{Xi}--\eqref{Ji}, i.e.,
\begin{align}
 J_i(\check{u}_i, \check{u}_{-i})  \leq J_i(u_i, \check{u}_{-i}) + O(1/N), \quad \forall 1\le i\le N,\label{epsNash}
 \end{align}
  where $u_i$ is any admissible control under CLPS information such that the closed-loop system under $(u_i,\hat u_{-i})$ has  a well defined solution.  \end{theorem}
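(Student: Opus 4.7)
The plan is to analyze $\mathcal{A}_i$'s best-response optimal control problem against fixed $\check u_{-i}$, derive a Riccati-based representation of its optimal value, apply a rescaling parallel to Section~\ref{sec:DecentralStrats}, and show that the best-response value differs from $J_i(\check u_i,\check u_{-i})$ by $O(1/N)$. Since $\inf_{u_i} J_i(u_i,\check u_{-i})\le J_i(u_i,\check u_{-i})$ for every admissible $u_i$ by definition, this delivers \eqref{epsNash}.

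First I would fix $\check u_{-i}$. Because $\overline{X}$ of \eqref{dbarX} is exogenous (unaffected by $\mathcal{A}_i$'s unilateral deviation), I take $(X,\overline{X})\in\mathbb{R}^{(N+1)n}$ as the augmented state and formulate the LQ control problem $\inf_{u_i} J_i(u_i,\check u_{-i})$. A quadratic ansatz
\[
V_i^\ast(t,\mathbf{x},\bar x)=\mathbf{x}^T \widehat{\mathbf{P}}_1^i(t)\mathbf{x}+2\mathbf{x}^T \widehat{\mathbf{P}}_{12}^i(t)\bar x+\bar x^T\widehat{\mathbf{P}}_2^i(t)\bar x
\]
in the associated HJB equation, together with completion of squares in $u_i$, yields a Riccati ODE system in which only the $\widehat{\mathbf{P}}_1^i$ equation is nonlinear (through the inverse of $R+\mathbf{B}_i^T\widehat{\mathbf{P}}_1^i\mathbf{B}_i+\mathbf{B}_0^T\widehat{\mathbf{P}}_1^i\mathbf{B}_0$); the equations for $\widehat{\mathbf{P}}_{12}^i$ and $\widehat{\mathbf{P}}_2^i$ become linear once $\widehat{\mathbf{P}}_1^i$ is known.

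Using the exchangeability of players $k\neq i$, I would decompose $\widehat{\mathbf{P}}_1^i$ and $\widehat{\mathbf{P}}_{12}^i$ into block forms in the spirit of Lemma~\ref{lm:checkPsubmat}, with $\mathcal{A}_i$'s row/column distinguished. Introducing rescaled variables $\widehat{\Lambda}_1^N,\ldots,\widehat{\Lambda}_{22}^N$ via the scalings in \eqref{checkPLam} produces a low-dimensional ODE system of the same shape as \eqref{ODEcheckLam1}--\eqref{ODEcheckLam22}, with perturbation terms $\widehat g_k^N$ satisfying $\sup_{[0,T]}|\widehat g_k^N|=O(1/N)$, the bound being uniform on any fixed tube around the limit trajectory. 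Solvability on $[0,T]$ for large $N$ then follows by the same tube argument used in the sufficiency proof of Theorem~\ref{thm:NSAS}, with condition \eqref{Pcon}--(ii) propagating to guarantee invertibility in the best-response Riccati equation.

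The crucial observation is that, as $N\to\infty$, the limit ODE system for $(\widehat{\Lambda}_k^N)$ coincides with the limit of $(\check{\Lambda}_k^N)$ from \eqref{ODEcheckLam1}--\eqref{ODEcheckLam22}; this reflects the mean field principle that the best response of a single agent against the mean field limit reproduces the decentralized control $\check u_i$ in \eqref{checkui}, which is exactly how $\check u_i$ was built from \eqref{ODELam1}--\eqref{ODELam2}. Combining this with the $O(1/N)$ perturbation bound and a Gr\"onwall estimate yields $\sup_{t\in[0,T]}|\widehat{\Lambda}_k^N(t)-\check{\Lambda}_k^N(t)|=O(1/N)$ for each $k$. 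Plugging these into the explicit expansion of $J_i(u_i^\ast,\check u_{-i})=\mathbb{E}[V_i^\ast(0,X(0),\overline{X}(0))]$ and comparing term by term with \eqref{checkJiLam} produces the desired $O(1/N)$ gap. The main obstacle will be the careful book-keeping needed to propagate the $O(1/N)$ perturbations through the asymmetric block structure induced by $\mathcal{A}_i$'s distinguished role, together with verifying global solvability of the best-response Riccati system under indefinite weights; the techniques developed in the companion paper \cite{HY2020b} provide the template for these estimates.
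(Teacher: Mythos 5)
Your proposal follows essentially the same route as the paper's proof: formulate $\mathcal{A}_i$'s best response against $\check u_{-i}$ as an LQ problem on the augmented state $(X,\overline X)$, take a quadratic ansatz leading to the Riccati system \eqref{ODEP1d}--\eqref{ODEP2d}, exploit exchangeability of the non-deviating players plus the rescaling \eqref{PdLam} to obtain a low-dimensional ODE system with $O(1/N)$ perturbations, establish solvability for large $N$ by the tube argument, and compare the resulting optimal cost with \eqref{checkJiLam}. The one place you go slightly beyond the paper is the claim that the limit ODE systems for the best-response blocks and for the $\check\Lambda_k^N$ coincide component-wise: this appears to be true, but it is the real content of the argument and cannot simply be read off from the mean field consistency heuristic --- the paper establishes only what is needed for the cost comparison (equality of the $\Lambda_1$ limits, the identities of Lemma \ref{lm:LambLam124} for the aggregate $Y$ entering \eqref{checkJiP}, and boundedness for the $\Lambda_3$ blocks) via explicit ODE comparisons and Gr\"onwall estimates, and your component-wise version would require the analogous computations for each block.
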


We will  prove Theorem~\ref{thm:O(1/N)Nash} after some technical preparations.
Without loss of generality, we prove \eqref{epsNash} for player $\mathcal{A}_1$.
Suppose that players $\mathcal{A}_i$, $2\leq i \leq N$, use decentralized strategies  given by \eqref{checkui}.
Player $\mathcal{A}_1$ seeks its best response strategy $u_1^b$
with respect to $\check{u}_{-1}$ so that  $J_1(u_1^b, \check{u}_{-1})= \inf_{u_1} J_1(u_1,\hat u_{-1}) $, where $J_1$ is defined by \eqref{Ji}. This leads to the optimal control problem with dynamics
\begin{align}
 d X(t)  = &  \Big[ \mathbf{A} X  + \widehat{\mathbf{B}}_1 u_1
- \widehat{\mathbf{B}}_{-1} ( \widehat\Theta X
 + \widehat\Theta_1 \overline{X} ) \Big]  dt + \mathbf{B}_1 u_1   d W_1 \notag \\
& - \sum_{i=2}^N  \mathbf{B}_i ( \Theta \mathbf{e}_i X + \Theta_1 \overline{X} ) d W_i
  + \mathbf{B}_0 \Big( u_1 -  \sum_{i=2}^N (\Theta \mathbf{e}_i X + \Theta_1 \overline{X}  )   \Big) d W_0 , \notag
\end{align}
where we denote $\widehat{\mathbf{B}}_{-1} = (0, \widehat{\mathbf{B}}_2, \cdots , \widehat{\mathbf{B}}_N)$ and
the mean field limit state $\overline{X}$ follows the dynamics
\eqref{dbarX}. The best response $u_1^b$ is to be determined.

We employ a dynamic programming approach to solve player $\mathcal{A}_1$'s optimal control problem.
Let $V_1^b(t, \mathbf{x}, \bar{x})$ be the value function of $\mathcal{A}_1$  with initial state $(X(t), \overline{X}(t))=(\mathbf{x}, \bar{x})$, associated with the cost  $J_1(u_1, \check u_{-1})$.
Now $V_1^b(t, \mathbf{x}, \bar{x})$ is formally solved from  the following dynamic programming equation:
\begin{align}
\label{HJBV1d}
& - \frac{\partial V_1^b}{\partial t}
 =  \min_{u_1 \in \mathbb{R}^{n_1}} \left[ \frac{\partial^T V_1^b}{\partial \mathbf{x}}
 ( \mathbf{A} \mathbf{x}  + \widehat{\mathbf{B}}_1 u_1
 - \widehat{\mathbf{B}}_{-1} ( \widehat\Theta \mathbf{x}
 + \widehat\Theta_1 \bar{x} ) ) \right. \\
&\qquad\qquad + \frac{\partial^T V_1^b}{\partial \bar{x}}  (A+G - B(\Theta + \Theta_1 ) ) \bar{x}  \notag  \\
&\qquad\qquad  \left. + \frac{1}{2} \Big(  u_1 - \sum_{i=2}^N (\Theta \mathbf{e}_i \mathbf{x} + \Theta_1 \bar{x}  )    \Big) ^T \mathbf{B}_0^T \frac{\partial^2 V_1^b}{\partial \mathbf{x}^2 } \mathbf{B}_0 (  u_1 -  \sum_{i=2}^N (\Theta \mathbf{e}_i \mathbf{x} + \Theta_1 \bar{x}  )     )   \right.  \notag  \\
 & \qquad\qquad \left.  + \frac{1}{2}  ( \mathbf{B}_1 u_1  )^T \frac{\partial^2 V_1^b}{\partial \mathbf{x}^2}  \mathbf{B}_1 u_1
+ \frac{1}{2} \sum_{i=2}^N ( \mathbf{B}_i (\Theta \mathbf{e}_i \mathbf{x} + \Theta_1 \bar{x} )  )^T \frac{\partial^2 V_1^b}{\partial \mathbf{x}^2} \mathbf{B}_i (\Theta \mathbf{e}_i \mathbf{x} + \Theta_1 \bar{x} )   \right. \notag \\
 &\qquad\qquad  \left.
 + \frac{1}{2} (   B_0 ( \Theta + \Theta_1 ) \bar{x} )^T \frac{\partial^2 V_1^b}{\partial \bar{x}^2}   B_0 ( \Theta + \Theta_1 ) \bar{x}
 + \mathbf{x}^T \mathbf{Q}_1 \mathbf{x} + u_1^{T} R u_1 \right.  \notag \\
& \qquad\qquad \left.
 -   \Big( u_1  -  \sum_{i=2}^N (\Theta \mathbf{e}_i \mathbf{x} + \Theta_1 \bar{x} )  \Big)^T \mathbf{B}_0^T   \frac{\partial^2 V_1^b}{ \partial \mathbf{x} \partial \bar{x}}   B_0 (\Theta + \Theta_1 ) \bar{x}  \right]  ,
\notag \\
 & V_1^b ( T, \mathbf{x} , \bar{x} ) =  \mathbf{x}^T \mathbf{Q}_{1f} \mathbf{x} , \quad t \in [0, T], \ \mathbf{x} \in \mathbb{R}^{Nn}, \
 \bar{x} \in \mathbb{R}^n . \notag
\end{align}
The first order condition with respect to $u_1$ gives
\begin{align}
 u_1^b = &
 - \frac{1}{2}\Big( R +  \frac{1}{2} \mathbf{B}_1^T \frac{\partial^2 V_1^b}{\partial \mathbf{x}^2} \mathbf{B}_1 + \frac{1}{2}\mathbf{B}_0^T \frac{\partial^2 V_1^b}{\partial \mathbf{x}^2} \mathbf{B}_0 \Big)^{-1} \cdot \label{u1dV} \\
&  \Big[ \widehat{\mathbf{B}}_1^T \frac{\partial V_1^b}{\partial \mathbf{x}}
 - \mathbf{B}_0^T \frac{\partial^2 V_1^b}{\partial \mathbf{x}^2} \mathbf{B}_0
 \sum_{i=2}^N ( \Theta \mathbf{e}_i \mathbf{x} + \Theta_1 \bar{x} )
 - \mathbf{B}_0^T \frac{\partial^2 V_1^b}{\partial \mathbf{x} \partial \bar{x}}  B_0 ( \Theta + \Theta_1 ) \bar{x}  \Big] . \notag
\end{align}
We substitute \eqref{u1dV} into \eqref{HJBV1d} to obtain
\begin{align}
\label{HJBV1d2}
 -\frac{\partial V_1^b}{\partial t} =&
 -  \Big[ \widehat{\mathbf{B}}_1^T \frac{\partial V_1^b}{\partial \mathbf{x}} - \mathbf{B}_0^T \frac{\partial^2 V_1^b}{\partial \mathbf{x}^2} \mathbf{B}_0
\sum_{i=2}^N (\Theta \mathbf{e}_i \mathbf{x} + \Theta_1 \bar{x} )   - \mathbf{B}_0^T \frac{\partial^2 V_1^b}{\partial \mathbf{x} \partial \bar{x}}  B_0(\Theta + \Theta_1) \bar{x}  \Big]^T \cdot   \\
 &\quad \frac{1}{4}\Big(  R +  \frac{1}{2} \mathbf{B}_1^T \frac{\partial^2 V_1^b}{\partial \mathbf{x}^2} \mathbf{B}_1 + \frac{1}{2} \mathbf{B}_0^T \frac{\partial^2 V_1^b}{\partial \mathbf{x}^2} \mathbf{B}_0 \Big)^{-1} \cdot \notag \\
 &\quad \Big[ \widehat{\mathbf{B}}_1^T \frac{\partial V_1^b}{\partial \mathbf{x}}
 - \mathbf{B}_0^T \frac{\partial^2 V_1^b}{\partial \mathbf{x}^2} \mathbf{B}_0 \sum_{i=2}^N
 (\Theta \mathbf{e}_i \mathbf{x} + \Theta_1 \bar{x} )
 - \mathbf{B}_0^T \frac{\partial^2 V_1^b}{\partial \mathbf{x} \partial \bar{x}}
 B_0(\Theta + \Theta_1 ) \bar{x}    \Big] \notag \\
 & + \frac{\partial^T V_1^b }{\partial \mathbf{x}} (\mathbf{A} \mathbf{x} - \sum_{i=2}^N \widehat{\mathbf{B}}_i (\Theta \mathbf{e}_i \mathbf{x} + \Theta_1 \bar{x} ) )
 + \frac{\partial^T V_1^b}{\partial \bar{x}}  (A+G - B ( \Theta + \Theta_1 ) ) \bar{x}   \notag \\
&  + \frac{1}{2} \Big( \sum_{i=2}^N (\Theta \mathbf{e}_i \mathbf{x} + \Theta_1 \bar{x} )    \Big)^T \mathbf{B}_0^T \frac{\partial^2 V_1^b}{\partial \mathbf{x}^2} \mathbf{B}_0 \sum_{i=2}^N
 (\Theta \mathbf{e}_i \mathbf{x} + \Theta_1 \bar{x} )    \notag \\
& + \frac{1}{2} \sum_{i=2}^N ( \Theta \mathbf{e}_i \mathbf{x} + \Theta_1 \bar{x}  )^T \mathbf{B}_i^T \frac{\partial^2 V_1^b }{\partial \mathbf{x}^2} \mathbf{B}_i
( \Theta \mathbf{e}_i \mathbf{x} + \Theta_1 \bar{x}  )  + \mathbf{x}^T \mathbf{Q}_1 \mathbf{x}   \notag \\
& + \frac{1}{2} ( B_0 ( \Theta + \Theta_1 ) \bar{x} )^T
 \frac{\partial^2 V_1^b}{\partial \bar{x}^2}  B_0 (\Theta + \Theta_1 ) \bar{x}
\notag \\
&  +  \Big(  \mathbf{B}_0 \sum_{i=2}^N ( \Theta \mathbf{e}_i \mathbf{x} + \Theta_1 \bar{x} )  \Big)^T
 \frac{\partial^2 V_1^b}{ \partial \mathbf{x} \partial \bar{x}}   B_0 ( \Theta + \Theta_1 ) \bar{x}  ,  \notag \\
 V_1^b ( T, \mathbf{x} , \bar{x} ) = & \mathbf{x}^T \mathbf{Q}_{1 f} \mathbf{x} . \notag
\end{align}
Assume $V_1^b$ takes the form
\begin{align}
V_1^b(t, \mathbf{x} , \bar{x}) = \mathbf{x}^T \mathbf{P}_1^b(t) \mathbf{x} + 2 \mathbf{x}^T
\mathbf{P}_{12}^b(t) \bar{x}  + \bar{x}^T \mathbf{P}_2^b(t) \bar{x}    .
\label{V1dansatz}
\end{align}
We denote $\mathbf{I}_{-1} = (0, I_n, \cdots, I_n) \in \mathbb{R}^{n \times N n}$, and substitute \eqref{V1dansatz} into \eqref{HJBV1d2} to obtain ODEs for $\mathbf{P}_1^b$, $\mathbf{P}_{12}^b$ and
$\mathbf{P}_2^b$:
\begin{align}
\label{ODEP1d}
\begin{cases}
 - \dot{\mathbf{P}}_1^b
=  - ( \widehat{\mathbf{B}}_1^T \mathbf{P}_1^b - \mathbf{B}_0^T \mathbf{P}_1^b \mathbf{B}_0 \Theta \mathbf{I}_{-1} )^T
 ( R + \mathbf{B}_1^T \mathbf{P}_1^b \mathbf{B}_1 + \mathbf{B}_0^T \mathbf{P}_1^b \mathbf{B}_0 )^{-1} \cdot \\
\qquad\qquad
( \widehat{\mathbf{B}}_1^T \mathbf{P}_1^b - \mathbf{B}_0^T \mathbf{P}_1^b \mathbf{B}_0 \Theta \mathbf{I}_{-1} )
 + \mathbf{P}_1^b ( \mathbf{A} - \widehat{\mathbf{B}}_{-1} \widehat\Theta )       \\
\hspace{1.2cm}
 + ( \mathbf{A} - \widehat{\mathbf{B}}_{-1} \widehat\Theta )^T
 \mathbf{P}_1^b
 + ( \mathbf{B}_0 \Theta \mathbf{I}_{-1} )^T \mathbf{P}_1^b   \mathbf{B}_0 \Theta \mathbf{I}_{-1}    \\
 \hspace{1.2cm}
 + \sum_{k=2}^N ( \mathbf{B}_k \Theta \mathbf{e}_k )^T
 \mathbf{P}_1^b ( \mathbf{B}_k \Theta \mathbf{e}_k )
 + \mathbf{Q}_1 , \\
\mathbf{P}_1^b(T) = \mathbf{Q}_1, \quad
R + \mathbf{B}_1^T \mathbf{P}_1^b(t) \mathbf{B}_1 + \mathbf{B}_0^T \mathbf{P}_1^b(t) \mathbf{B}_0 >0 , \ \forall  t \in [0, T]   ,
\end{cases}
\end{align}
\begin{align}
\label{ODEP12d}
\begin{cases}
  - \dot{\mathbf{P}}_{12}^b =
  (\mathbf{A} - \widehat{\mathbf{B}}_{-1} \widehat\Theta )^T
 \mathbf{P}_{12}^b  + \mathbf{P}_{12}^b (A+G - B(\Theta + \Theta_1 ) )  \\
\hspace{1.25cm}
 +  (\mathbf{B}_0 \Theta \mathbf{I}_{-1} )^T \mathbf{P}_1^b \mathbf{B}_0 \mathbf{I}_{-1}\widehat\Theta_1
- \mathbf{P}_1^b \widehat{\mathbf{B}}_{-1} \widehat\Theta_1
 \\
 \hspace{1.25cm}
  + \sum_{k=2}^N ( \mathbf{B}_k \Theta \mathbf{e}_k )^T \mathbf{P}_1^b \mathbf{B}_k \Theta_1
 + (\mathbf{B}_0 \Theta \mathbf{I}_{-1} )^T \mathbf{P}_{12}^b B_0 (\Theta + \Theta_1 )  \\
\hspace{1.25cm}
  - ( \widehat{\mathbf{B}}_1^T \mathbf{P}_1^b - \mathbf{B}_0^T \mathbf{P}_1^b \mathbf{B}_0 \Theta \mathbf{I}_{-1} )^T (R + \mathbf{B}_1^T \mathbf{P}_1^b \mathbf{B}_1 + \mathbf{B}_0^T \mathbf{P}_1^b \mathbf{B}_0 )^{-1} \cdot   \\
\qquad\qquad\quad
[ \widehat{\mathbf{B}}_1^T \mathbf{P}_{12}^b - \mathbf{B}_0^T \mathbf{P}_1^b \mathbf{B}_0 \mathbf{I}_{-1} \widehat\Theta_1
 - \mathbf{B}_0^T \mathbf{P}_{12}^b B_0( \Theta + \Theta_1 )  ]  ,           \\
\mathbf{P}_{12}^b(T) = 0 ,
\end{cases}
\end{align}
\begin{align}
\label{ODEP2d}
\begin{cases}
  - \dot{\mathbf{P}}_2^b =  - [ \widehat{\mathbf{B}}_1^T \mathbf{P}_{12}^b -  \mathbf{B}_0^T \mathbf{P}_1^b \mathbf{B}_0 \mathbf{I}_{-1}\widehat\Theta_1 - \mathbf{B}_0^T \mathbf{P}_{12}^b B_0 ( \Theta + \Theta_1 ) ]^T \cdot \\
\qquad\qquad
  (R + \mathbf{B}_1^T \mathbf{P}_1^b \mathbf{B}_1 + \mathbf{B}_0^T
 \mathbf{P}_1^b \mathbf{B}_0 )^{-1} \cdot  \\
\qquad\qquad
  [ \widehat{\mathbf{B}}_1^T \mathbf{P}_{12}^b -  \mathbf{B}_0^T \mathbf{P}_1^b \mathbf{B}_0 \mathbf{I}_{-1} \widehat\Theta_1 - \mathbf{B}_0^T
 \mathbf{P}_{12}^b B_0 ( \Theta + \Theta_1 ) ] \\
\hspace{1.2cm}
 - \mathbf{P}_{12}^{bT} \widehat{\mathbf{B}}_{-1} \widehat\Theta_1
 - \widehat\Theta_1^T \widehat{\mathbf{B}}_{-1}^T \mathbf{P}_{12}^b
 +  (\mathbf{B}_0 \mathbf{I}_{-1} \widehat\Theta_1 )^T \mathbf{P}_1^b \mathbf{B}_0 \mathbf{I}_{-1} \widehat\Theta_1         \\
 \hspace{1.2cm}
  + \mathbf{P}_2^b (A + G - B(\Theta + \Theta_1 ) ) + (A + G - B ( \Theta + \Theta_1 ) )^T \mathbf{P}_2^b    \\
\hspace{1.2cm}
 + \sum_{k=2}^N ( \mathbf{B}_k \Theta_1 )^T \mathbf{P}_1^b (\mathbf{B}_k \Theta_1 )
 + (\Theta + \Theta_1 )^T B_0^T \mathbf{P}_2^b B_0 (\Theta + \Theta_1 )   \\
 \hspace{1.2cm}
 +   ( \mathbf{B}_0 \mathbf{I}_{-1} \widehat\Theta_1  )^T \mathbf{P}_{12}^b B_0 ( \Theta + \Theta_1 )
 + (\Theta + \Theta_1 )^T B_0^T \mathbf{P}_{12}^{bT} \mathbf{B}_0 \mathbf{I}_{-1} \widehat\Theta_1   ,    \\
\mathbf{P}_2^b(T) = 0 .
\end{cases}
\end{align}

\begin{proposition}\label{prop:br}
Suppose that Assumption~\ref{assm:solLam} holds and that \eqref{ODEP1d} has a solution $\mathbf{P}_1^b$ on $[0, T]$.
Then we may uniquely solve \eqref{ODEP12d}--\eqref{ODEP2d}, and the best response strategy for $\mathcal{A}_1$ is
\begin{align}
\label{u1dP}
 u_1^b(t) = &
 -  \Big( R +  \mathbf{B}_1^T \mathbf{P}_1^b \mathbf{B}_1
 + \mathbf{B}_0^T \mathbf{P}_1^b \mathbf{B}_0 \Big)^{-1}
 \Big[ \widehat{\mathbf{B}}_1^T (  \mathbf{P}_1^b X(t) +  \mathbf{P}_{12}^b \overline{X}(t) ) \\
&\quad   - \mathbf{B}_0^T  \mathbf{P}_1^b \mathbf{B}_0
 \sum_{i=2}^N ( \Theta \mathbf{e}_i X(t) + \Theta_1 \overline{X}(t) )
 - \mathbf{B}_0^T  \mathbf{P}_{12}^b  B_0
 ( \Theta + \Theta_1 ) \overline{X}(t)  \Big] . \notag
\end{align}
\end{proposition}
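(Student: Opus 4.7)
The plan has two ingredients: existence-uniqueness for the remaining Riccati-type matrices $\mathbf{P}_{12}^b, \mathbf{P}_2^b$, followed by a verification argument that identifies $u_1^b$ as the optimal response.

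First, assuming $\mathbf{P}_1^b$ is given as a solution of \eqref{ODEP1d} on $[0,T]$, the strict positivity condition $R + \mathbf{B}_1^T \mathbf{P}_1^b \mathbf{B}_1 + \mathbf{B}_0^T \mathbf{P}_1^b \mathbf{B}_0 > 0$ embedded in \eqref{ODEP1d} ensures the inverse appearing in \eqref{ODEP12d}--\eqref{ODEP2d} is well defined and continuous on $[0,T]$. With $\mathbf{P}_1^b$ substituted in, \eqref{ODEP12d} becomes a first-order linear matrix ODE for $\mathbf{P}_{12}^b$ with continuous coefficients and terminal condition $\mathbf{P}_{12}^b(T)=0$, hence admits a unique solution on $[0,T]$. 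Substituting $(\mathbf{P}_1^b, \mathbf{P}_{12}^b)$ into \eqref{ODEP2d} likewise produces a first-order linear matrix ODE for $\mathbf{P}_2^b$ with $\mathbf{P}_2^b(T)=0$, yielding a unique solution on $[0,T]$.

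Next, define the candidate value function $V_1^b(t,\mathbf{x},\bar x) = \mathbf{x}^T \mathbf{P}_1^b(t) \mathbf{x} + 2 \mathbf{x}^T \mathbf{P}_{12}^b(t) \bar x + \bar x^T \mathbf{P}_2^b(t) \bar x$. For any admissible control $u_1$ making the closed-loop system under $(u_1, \check u_{-1})$ well defined, apply It\^o's formula to $V_1^b(t,X(t),\overline X(t))$ on $[0,T]$, where $X(t)$ evolves under $(u_1, \check u_{-1})$ and $\overline X(t)$ is exogenously given by \eqref{dbarX}. Combining the resulting drift with the running cost $[X(t)]_{\mathbf{Q}_1}^2 + u_1^T R u_1$ yields an integrand $\Phi(t,X,\overline X, u_1)$ that is quadratic in $u_1$ with Hessian $2\bigl(R + \mathbf{B}_1^T \mathbf{P}_1^b \mathbf{B}_1 + \mathbf{B}_0^T \mathbf{P}_1^b \mathbf{B}_0\bigr) > 0$; this is precisely the derivation that led to \eqref{u1dV} and \eqref{HJBV1d2}.

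Completing the square in $u_1$ and invoking the ODEs \eqref{ODEP1d}--\eqref{ODEP2d} shows that the minimum value of $\Phi$ vanishes identically on $[0,T]$ and is attained exactly at the $u_1^b$ of \eqref{u1dP}, so that $\Phi(t,X,\overline X, u_1) = [u_1 - u_1^b]_{R + \mathbf{B}_1^T \mathbf{P}_1^b \mathbf{B}_1 + \mathbf{B}_0^T \mathbf{P}_1^b \mathbf{B}_0}^2 \ge 0$. Taking expectations and using the terminal conditions $\mathbf{P}_1^b(T)=\mathbf{Q}_{1f}$, $\mathbf{P}_{12}^b(T)=0$, $\mathbf{P}_2^b(T)=0$ produces $J_1(u_1, \check u_{-1}) \ge \mathbb{E}[V_1^b(0,X(0),\overline X(0))]$ with equality iff $u_1 = u_1^b$, which finishes the proof. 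The main technical obstacle is to justify that the stochastic integrals from It\^o's formula are true martingales in expectation rather than only local martingales; this will be handled by a standard localization via stopping times $\tau_n \uparrow T$, exploiting the boundedness of $\mathbf{P}_1^b, \mathbf{P}_{12}^b, \mathbf{P}_2^b$ on $[0,T]$ together with the second-moment control on $X(t)$ implied by admissibility of $u_1$ under CLPS information.
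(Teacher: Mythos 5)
Your proposal is correct and follows essentially the same route as the paper: the paper likewise solves \eqref{ODEP12d}--\eqref{ODEP2d} as first-order linear ODEs once $\mathbf{P}_1^b$ is given, and then invokes the standard LQ verification theorem (Yong--Zhou, Theorem 6.6.1) for optimality, which is exactly the completion-of-squares-plus-localization argument you write out explicitly. Your version is simply more self-contained in carrying out the verification that the paper delegates to the citation.
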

\begin{proof}
If \eqref{ODEP1d} admits a (unique) solution $\mathbf{P}_1^b$ on $[0, T]$, then we can substitute $\mathbf{P}_1^b$ into
\eqref{ODEP12d} and solve a first order linear ODE for a unique
$\mathbf{P}_{12}^b$. Given $(\mathbf{P}_1^b, \mathbf{P}_{12}^b)$, $\mathbf{P}_2^b$ is again solved from a linear ODE.
Note that the LQ optimal control problem of player $\mathcal{A}_1$ has its Riccati equation given by \eqref{ODEP1d}--\eqref{ODEP2d}.
It then follows from \cite[Theorem 6.6.1]{YZ1999} that player $\mathcal{A}_1$'s optimal control problem is solvable with the optimal control  given by \eqref{u1dP}.
\end{proof}

 We will later show that for all sufficiently large $N$, \eqref{ODEP1d} indeed has a solution on $[0, T]$ (see Lemma \ref{lm:exissolP1P2d}). The next lemma is
 parallel to  Lemma~\ref{lm:Psubmat}.

\begin{lemma}
\label{lm:Pdsubmat}
Suppose \eqref{ODEP1d} has a solution $\mathbf{P}_1^b$ on $[0,T]$.
Then for \eqref{ODEP1d} and \eqref{ODEP12d}, $\mathbf{P}_1^b$ and $\mathbf{P}_{12}^b$ have the representations
\begin{align}
 & \mathbf{P}_1^b
 = \begin{bmatrix}
\Pi_1^{b N} & \Pi_2^{b N} & \Pi_2^{b N} & \cdots & \Pi_2^{b N} \\
( \Pi_2^{b N})^T &  \Pi_3^{b N}  &  \Pi_4^{b N}  & \cdots &  \Pi_4^{b N}  \\
( \Pi_2^{b N} )^T & \Pi_4^{b N} & \Pi_3^{b N} & \cdots & \Pi_4^{b N} \\
\vdots & \vdots & \vdots & \ddots & \vdots \\
( \Pi_2^{b N} )^T & \Pi_4^{b N} & \Pi_4^{b N} & \cdots & \Pi_3^{b N} \\
\end{bmatrix} , \label{P1dsubmat}\\
& \mathbf{P}_{12}^b = \begin{bmatrix} (\Pi_{11}^{b N})^T , \
 ( \Pi_{12}^{b N} )^T , \ \cdots , \ (\Pi_{12}^{b N})^T \end{bmatrix}^T ,
 \label{P12dsubmat}
\end{align}
where $\Pi^{bN}_1(t)$, $\Pi^{bN}_3(t)$, $\Pi^{bN}_4(t) \in \mathcal{S}^{ n}$, and $\Pi^{bN}_2(t)$, $\Pi^{bN}_{11}(t)$,  $\Pi^{bN}_{12}(t) \in \mathbb{R}^{n\times n}$.
\end{lemma}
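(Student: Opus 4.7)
The plan is to exploit the exchange symmetry among players $\mathcal{A}_2, \ldots, \mathcal{A}_N$, all of whom apply the symmetric decentralized strategies in \eqref{checkui}. Since player $\mathcal{A}_1$ is the distinguished unilateral deviator, its labeled position is fixed, but the remaining $N-1$ coordinate blocks are fully interchangeable. Concretely, for any $2 \leq i,j \leq N$, define the block-permutation matrix $J_{ij}$ (as in the preamble to Lemma~\ref{lm:Psubmat}) that swaps the $i$th and $j$th coordinate blocks of $\mathbb{R}^{Nn}$. Note that $J_{ij}$ fixes the first block; equivalently $J_{ij}$ fixes $\mathbf{e}_1$, $\widehat{\mathbf{B}}_1$, and $\mathbf{B}_1$.

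The first step is to verify that all the data appearing in the Riccati ODE \eqref{ODEP1d} for $\mathbf{P}_1^b$ are invariant under $J_{ij}$, in the sense that $J_{ij}^T (\cdot) J_{ij}$ leaves each of $\mathbf{A} - \widehat{\mathbf{B}}_{-1}\widehat{\Theta}$, $\mathbf{Q}_1$, $\widehat{\mathbf{B}}_1$, $\mathbf{B}_0\Theta\mathbf{I}_{-1}$, and $\sum_{k=2}^N (\mathbf{B}_k \Theta \mathbf{e}_k)(\mathbf{B}_k \Theta \mathbf{e}_k)^T$ invariant, and likewise that the terminal condition $\mathbf{Q}_{1f} = \mathbf{K}_{1f}^T Q_f \mathbf{K}_{1f}$ is invariant (this is where the structure of $\mathbf{K}_{1f}$, with the first block distinguished and all others equal to $-\Gamma_f/N$, is essential). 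Then $J_{ij}^T \mathbf{P}_1^b(t) J_{ij}$ satisfies the same Riccati equation with the same terminal condition; by uniqueness of solutions (see Remark~\ref{rmk:uniqueP} for the analogous argument), $J_{ij}^T \mathbf{P}_1^b(t) J_{ij} = \mathbf{P}_1^b(t)$ for all $t\in[0,T]$ and all $2\leq i,j\leq N$. Combining this invariance with the symmetry $\mathbf{P}_1^b = (\mathbf{P}_1^b)^T$ forces the claimed block pattern \eqref{P1dsubmat}: blocks $(1,k)$ for $k\geq 2$ collapse to a common $\Pi_2^{bN}$, diagonal blocks $(k,k)$ with $k\geq 2$ to a common symmetric $\Pi_3^{bN}$, and off-diagonal blocks $(k,\ell)$ with $k\neq \ell$, $k,\ell\geq 2$ to a common symmetric $\Pi_4^{bN}$, while $\Pi_1^{bN} \in \mathcal{S}^n$ is the $(1,1)$ block.

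Second, for \eqref{ODEP12d}, once $\mathbf{P}_1^b$ has the above symmetric structure, the inhomogeneous linear ODE for $\mathbf{P}_{12}^b \in \mathbb{R}^{Nn\times n}$ inherits the same invariance: left-multiplying the equation by $J_{ij}^T$ yields the same equation (using that $\widehat{\Theta}_1 = \mathbf{1}_{N\times 1}\otimes \Theta_1$ and $\widehat{\mathbf{B}}_{-1}\widehat{\Theta}_1$ are invariant under $J_{ij}^T(\cdot)$, and that $\mathbf{B}_0$, $\mathbf{I}_{-1}$ are invariant under $J_{ij}^T$). Hence $J_{ij}^T \mathbf{P}_{12}^b = \mathbf{P}_{12}^b$, which forces all blocks of $\mathbf{P}_{12}^b$ indexed by $k\geq 2$ to coincide, giving the structure \eqref{P12dsubmat} with first block $(\Pi_{11}^{bN})^T$ and repeated block $(\Pi_{12}^{bN})^T$.

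The main obstacle, and the only nonroutine part of the argument, is the bookkeeping needed to check the invariance of every matrix coefficient appearing in \eqref{ODEP1d}--\eqref{ODEP12d} under conjugation by $J_{ij}$ with $i,j\geq 2$; once those invariances are in place, the conclusion is immediate from uniqueness for the Riccati ODE and uniqueness for the linear ODE. Since this parallels the proof of Lemma~\ref{lm:Psubmat} (and of Lemma~\ref{lm:checkPsubmat}), one may reasonably refer to that argument and give only the verification of invariance for the terms that are specific to the best-response setting, namely those involving $\widehat{\mathbf{B}}_{-1}$, $\mathbf{I}_{-1}$, and $\sum_{k=2}^N$ instead of $\sum_{k=1}^N$.
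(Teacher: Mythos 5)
Your proposal is correct and follows essentially the same route as the paper, which proves this lemma by the same permutation-invariance-plus-uniqueness argument as Lemma~\ref{lm:Psubmat} (restricted here to the transpositions $J_{ij}$ with $i,j\ge 2$ that fix the distinguished first block, and supplemented by the observation that the linear ODE \eqref{ODEP12d} inherits the invariance $J_{ij}^T\mathbf{P}_{12}^b=\mathbf{P}_{12}^b$). Your identification of the only nonroutine step — checking invariance of the coefficients specific to the best-response setting, namely those built from $\widehat{\mathbf{B}}_{-1}$, $\mathbf{I}_{-1}$ and the sums over $k\ge 2$ — matches what the paper leaves implicit in its ``similar and omitted'' proof.
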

\begin{proof}
The proof is similar to that of Lemma~\ref{lm:Psubmat}, and is thus omitted here.
\end{proof}

We define new variables:
\begin{align}
\begin{cases}
     \Lambda_1^{b N} = \Pi_1^{bN}   , \quad
     \Lambda_2^{b N} = N \Pi_2^{bN}    , \quad
      \Lambda_3^{b N} = N^2 \Pi_3^{bN}  , \quad
      \Lambda_4^{b N} = N^2 \Pi_4^{bN} ,   \\
  \Lambda_{11}^{b N} =  \Pi_{11}^{bN}   , \quad
     \Lambda_{12}^{bN} = N \Pi_{12}^{bN}  , \quad
  \Lambda_{22}^{b N} =\mathbf{P}_2^b   ,
\end{cases} \label{PdLam}
\end{align}
and suppose \eqref{ODEP1d} has a solution $\mathbf{P}_1^b$ on $[0, T]$.
We substitute \eqref{P1dsubmat} and \eqref{P12dsubmat} into
\eqref{ODEP1d}--\eqref{ODEP2d} and take a change of variables by \eqref{PdLam} to obtain (under the additional condition that $R+B_1^T \Lambda_1^{b N}(t) B_1 > 0$)  the following ODEs:
\begin{align}
 & \begin{cases}
  \dot{\Lambda}_1^{b N} =   \Lambda_1^{b N} B
 ( \mathcal{R}_1 ( \Lambda_1^{b N} )  )^{-1} B^T \Lambda_1^{b N}
 - \Lambda_1^{b N } A  -  A^T \Lambda_1^{b N }    \\
  \hspace{1.1cm}
- Q  + g_1^{bN}  ,  \\
\Lambda_1^{b N}(T) =   (I - \Gamma_f^T/N) Q_f (I - \Gamma_f/N) , \\
 R+B_1^T \Lambda_1^{b N}(t) B_1 > 0 , \quad \forall  t\in[0, T] ,
\end{cases} \label{ODELam1dN} \\
 & \begin{cases}
  \dot\Lambda_2^{b N}  =  \Lambda_1^{b N} B
 ( \mathcal{R}_1 ( \Lambda_1^{b N} ) )^{-1} B^T\Lambda_2^{b N}
 - ( \Lambda_1^{b N} +  \Lambda_2^{b N} ) G  \\
 \hspace{1.1cm}
 - A^T  \Lambda_2^{b N} - \Lambda_2^{b N}  ( A -   B \Theta )
 + Q \Gamma  + g_2^{bN}  , \\
\Lambda_2^{bN}(T) =  - (I - \Gamma_f^T/N) Q_f \Gamma_f ,
\end{cases} \label{ODELam2dN}
\end{align}
\begin{align}
& \begin{cases}
\label{ODELam3dN}
  \dot\Lambda_3^{bN} =    (\Lambda_2^{bN})^T
B ( \mathcal{R}_1( \Lambda_1^{bN} ) )^{-1} B^T \Lambda_2^{b N}
 - (  \Lambda_2^{bN} + \Lambda_4^{bN} )^T G \\
\hspace{1.1cm}
 - G^T (\Lambda_2^{bN} + \Lambda_4^{bN} )
   - \Lambda_3^{b N} (A- B \Theta)
 - ( A - B \Theta )^T \Lambda_3^{b N} \\
\hspace{1.1cm}
 - \Theta^T B_0^T ( \Lambda_1^{b N} + \Lambda_2^{b N}
 + (\Lambda_2^{b N})^T  + \Lambda_4^{bN} ) B_0   \Theta     \\
\hspace{1.1cm}
 - \Theta^T  B_1^T \Lambda_3^{b N} B_1 \Theta
 - \Gamma^T Q \Gamma   + g_3^{bN}  , \\
\Lambda_3^{b N}(T)  =   \Gamma_f^T Q_f \Gamma_f   ,
\end{cases}
\end{align}
\begin{align}
& \begin{cases}
  \dot\Lambda_4^{b N} =   (\Lambda_2^{b N})^T
 B ( \mathcal{R}_1( \Lambda_1^{bN} ) )^{-1} B^T \Lambda_2^{b N}
 - (\Lambda_2^{b N})^T G - G^T \Lambda_2^{b N} \\
\hspace{1.1cm}
- \Lambda_4^{b N} (A+G -  B \Theta )
- (A+G -  B \Theta )^T  \Lambda_4^{b N} \\
\hspace{1.1cm}
 - \Theta^T B_0^T (\Lambda_1^{b N} + \Lambda_2^{b N}
 +  (\Lambda_2^{b N})^T + \Lambda_4^{b N} ) B_0 \Theta \\
\hspace{1.1cm}
 - \Gamma^T Q \Gamma + g_4^{bN}   ,  \\
 \Lambda_4^{b N} (T) =  \Gamma_f^T Q_f \Gamma_f   ,
\end{cases} \label{ODELam4dN} \\
& \begin{cases}
  \dot\Lambda_{11}^{b N} =   \Lambda_1^{b N}  B
 ( \mathcal{R}_1 (\Lambda_1^{bN} ) )^{-1} B^T \Lambda_{11}^{b N}
 + \Lambda_2^{b N} B \Theta_1 - A^T \Lambda_{11}^{bN}  \\
\hspace{1.1cm}
- \Lambda_{11}^{bN} (A+G - B( \Theta + \Theta_1 ) )  + g_{11}^{bN} ,\\
\Lambda_{11}^{b N} (T) = 0 ,
\end{cases} \label{ODELam11dN}
\end{align}
\begin{align}
 & \begin{cases}
  \dot\Lambda_{12}^{b N} =   (\Lambda_2^{b N})^T B
 ( \mathcal{R}_1  (\Lambda_1^{b N} ) )^{-1} B^T \Lambda_{11}^{b N}
 - G^T ( \Lambda_{11}^{b N} + \Lambda_{12}^{b N} )   \\
 \hspace{1.1cm}
 - ( A^T  - \Theta^T B^T ) \Lambda_{12}^{b N}
 - \Lambda_{12}^{b N} (A+G - B(\Theta + \Theta_1 ) ) \\
 \hspace{1.1cm}
 - \Theta^T B_0^T
(\Lambda_1^{b N} + \Lambda_2^{b N} + (\Lambda_2^{bN})^T + \Lambda_4^{b N} ) B_0 \Theta_1  \\
 \hspace{1.1cm}
 - \Theta^T B_0^T ( \Lambda_{11}^{b N}  + \Lambda_{12}^{b N} ) B_0 (\Theta + \Theta_1 ) + \Lambda_4^{b N} B \Theta_1
 + g_{12}^{bN}  , \\
\Lambda_{12}^{b N}(T) = 0 ,
\end{cases}\label{ODELam12dN}
\end{align}
\begin{align}
 & \begin{cases}
   \dot{\Lambda}_{22}^{b N}
  =    (\Lambda_{11}^{b N})^T B
 ( \mathcal{R}_1( \Lambda_1^{b N} ) )^{-1} B^T \Lambda_{11}^{bN}
\\
\hspace{1.1cm}
  - \Lambda_{22}^{b N} (A+G - B ( \Theta + \Theta_1 ) )
  + (\Lambda_{12}^{b N})^T B \Theta_1  \\
\hspace{1.1cm}
  -  (A+G - B( \Theta + \Theta_1 ) )^T \Lambda_{22}^{b N}
 + \Theta_1^T B^T \Lambda_{12}^{b N}  \\
\hspace{1.1cm}
  - \Theta_1^T B_0^T ( \Lambda_1^{b N} + \Lambda_2^{b N}
 + (\Lambda_2^{bN})^T + \Lambda_4^{b N} ) B_0 \Theta_1 \\
\hspace{1.1cm}
 - ( \Theta + \Theta_1 )^T B_0^T \Lambda_{22}^{b N} B_0
 ( \Theta + \Theta_1 )   \\
\hspace{1.1cm}
- \Theta_1^T B_0^T ( \Lambda_{11}^{b N} + \Lambda_{12}^{b N} ) B_0 (\Theta + \Theta_1 ) \\
\hspace{1.1cm}
 - (\Theta + \Theta_1 )^T B_0^T
(  \Lambda_{11}^{b N}  + \Lambda_{12}^{b N} )^T B_0 \Theta_1
 + g_{22}^{bN}   ,  \\
\Lambda_{22}^{b N}(T) = 0 .
\end{cases} \label{ODELam22dN}
\end{align}
The  terms $g^{bN}_k$, $1\leq k \leq 4$, $g^{bN}_{11}$, $g^{bN}_{12}$ and $g^{bN}_{22}$, as functions of $(N, \Lambda_1^{bN}, \Lambda_2^{bN},\cdots , \Lambda_{22}^{bN})$, are given in Appendix~\ref{appendix:gdN}.

Let $(\Lambda_1^b, \Lambda_2^b,\cdots, \Lambda_{22}^b )$ be determined by the ODE system
\eqref{ODELam1d}--\eqref{ODELam22d} in Appendix ~\ref{appendix:PfsolLamdN}.
\begin{lemma} \label{lm:LambLam124}
Under Assumption \ref{assm:solLam}
we have
\begin{align}
&\Lambda_1^b(t)=\Lambda_1(t), \label{LbL1} \\
&\Lambda_2^b(t)+\Lambda_{11}^b(t)= \Lambda_2(t), \label{LbL2}\\
&\zeta^b(t)=  \Lambda_{2}(t)+  \Lambda_{2}^{T}(t)+  \Lambda_{4}(t)\label{LbL3}
\end{align}
for all $t\in [0,T]$,
where
$$
\zeta^b(t):=  (\Lambda_{2}^b+  \Lambda_{2}^{bT}+  \Lambda_{4}^b+  \Lambda_{11}^b+  \Lambda_{11}^{bT}+\Lambda_{12}^b+ \Lambda_{12}^{bT}+  \Lambda_{22}^b)(t).
$$
\end{lemma}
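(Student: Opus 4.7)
The strategy is to verify each of \eqref{LbL1}--\eqref{LbL3} in turn by writing down the ODE satisfied by an appropriate combination of the $\Lambda^b$-variables, matching it with the ODE for $\Lambda_1$, $\Lambda_2$, or $\Lambda_4$, and invoking uniqueness. The three claims feed into each other: \eqref{LbL1} is used to prove \eqref{LbL2}, and both are used in \eqref{LbL3}.

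For \eqref{LbL1}, direct inspection of \eqref{ODELam1dN}, after dropping the $O(1/N)$ perturbation $g_1^{bN}$ and the $1/N$ terminal correction, shows that $\Lambda_1^b$ satisfies exactly the Riccati ODE \eqref{ODELam1} for $\Lambda_1$ with the same terminal value $Q_f$. Since the Riccati vector field is locally Lipschitz along trajectories satisfying $\mathcal R_1(\cdot)>0$ (cf.\ Remark~\ref{rmk:uniqueP}), uniqueness gives $\Lambda_1^b\equiv\Lambda_1$ on $[0,T]$.

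For \eqref{LbL2}, set $Y:=\Lambda_2^b+\Lambda_{11}^b$. Substituting $\Lambda_1^b=\Lambda_1$ and using $\Theta=HB^T\Lambda_1$, $\Theta_1=HB^T\Lambda_2$ with $H=(\mathcal R_1(\Lambda_1))^{-1}$, one adds the limits of \eqref{ODELam2dN} and \eqref{ODELam11dN}. The drift terms together with the $B\Theta$- and $B\Theta_1$-type cross contributions regroup as $-YA-YG-\Lambda_1 G-A^TY+\Lambda_1 BHB^TY+YBHB^T\Lambda_1+YBHB^T\Lambda_2+Q\Gamma$. Setting $E:=Y-\Lambda_2$ and subtracting \eqref{ODELam2}, the quadratic-in-$Y$ part cancels and $E$ satisfies a homogeneous linear ODE $\dot E=\alpha(t)E+E\beta(t)$ with $E(T)=(-Q_f\Gamma_f)+0-(-Q_f\Gamma_f)=0$, forcing $E\equiv 0$.

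For \eqref{LbL3}, by \eqref{LbL2} one has $\zeta^b=\Lambda_2+\Lambda_2^T+W$ with $W:=\Lambda_4^b+\Lambda_{12}^b+\Lambda_{12}^{bT}+\Lambda_{22}^b$, so it suffices to prove $W\equiv\Lambda_4$. Summing the limits of the ODEs for $\Lambda_4^b$, $\Lambda_{12}^b$, $\Lambda_{12}^{bT}$, $\Lambda_{22}^b$, and invoking \eqref{LbL1}--\eqref{LbL2}, the $A$- and $G$-type drifts collapse to $-WA-A^TW-(\Lambda_2^T+W)G-G^T(\Lambda_2+W)$; the $B$-only terms regroup into $WBHB^T(\Lambda_1+\Lambda_2)+(\Lambda_1+\Lambda_2^T)BHB^TW+\Lambda_2^TBHB^T\Lambda_2$; and the four $B_0$-quadratic contributions fuse into the single block $(\Lambda_1+\Lambda_2^T)BHB_0^T(\Lambda_1+\Lambda_2+\Lambda_2^T+W)B_0HB^T(\Lambda_1+\Lambda_2)$ appearing in $\Psi_4$. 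Thus $\dot W=\Psi_4(\Lambda_1,\Lambda_2,W)$ with $W(T)=\Gamma_f^TQ_f\Gamma_f$; since $\Psi_4(\Lambda_1,\Lambda_2,\cdot)$ is affine in its last slot, $W$ and $\Lambda_4$ satisfy the same linear ODE with the same terminal data, yielding $W\equiv\Lambda_4$. The main obstacle is the bookkeeping at this last step: verifying that the $B_0$-quadratic contributions arising from four distinct ODEs assemble precisely into the compact $\Psi_4$-form. The key simplification is to partition every term by the type of its outer coefficient ($A$, $G$, $B\Theta$, $B\Theta_1$, or $B_0$-quadratic) and then repeatedly use $\Theta=HB^T\Lambda_1$, $\Theta_1=HB^T\Lambda_2$ together with \eqref{LbL1}--\eqref{LbL2} to recognize the composite factors $\Lambda_1+\Lambda_2$ and $\Lambda_1+\Lambda_2^T$.
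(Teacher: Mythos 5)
Your proposal is correct, and the overall mechanism (match ODEs and terminal data, then invoke uniqueness/Gr\"onwall for a linear system) is the same as the paper's. For \eqref{LbL1} and \eqref{LbL2} your argument coincides with the paper's: $\Lambda_1^b$ satisfies the same Riccati equation \eqref{ODELam1} as $\Lambda_1$ with terminal value $Q_f$, and the difference $E=\Lambda_2^b+\Lambda_{11}^b-\Lambda_2$ obeys a homogeneous linear ODE with $E(T)=0$ (your remark that the ``quadratic-in-$Y$ part cancels'' is a slight misnomer --- the term $YBHB^T\Lambda_2$ is already linear in $Y$ since $\Theta_1$ is built from the fixed $\Lambda_2$, and subtracting $\Lambda_2 BHB^T\Lambda_2$ yields the linear term $EBHB^T\Lambda_2$ --- but the conclusion is right). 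For \eqref{LbL3} your route is a clean repackaging of the paper's: the paper works with the full sums $\zeta$ and $\zeta^b$, computes the difference of vector fields $\Phi-\Phi^b$, and must separately identify the residual $\Delta_\Phi$ and show $\Delta_\Phi=\Lambda_2^TBHB^T(\zeta-\zeta^b)$ before closing with Gr\"onwall; you instead peel off the already-known block $\Lambda_2+\Lambda_2^T$ and verify that $W=\Lambda_4^b+\Lambda_{12}^b+\Lambda_{12}^{bT}+\Lambda_{22}^b$ satisfies exactly $\dot W=\Psi_4(\Lambda_1,\Lambda_2,W)$ with $W(T)=\Gamma_f^TQ_f\Gamma_f$, so that $W\equiv\Lambda_4$ by uniqueness of the affine ODE. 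Since $\zeta^b-\zeta=W-\Lambda_4$ by \eqref{LbL2}, the two arguments are equivalent in content, but your version makes the structural statement (that the four summed ODEs reassemble into the $\Psi_4$ form, including the fusion of the four $B_0$-quadratic blocks into $(\Lambda_1+\Lambda_2^T)BHB_0^T(\Lambda_1+\Lambda_2+\Lambda_2^T+W)B_0HB^T(\Lambda_1+\Lambda_2)$) explicit, which the paper leaves implicit inside $\Phi^b$; I checked this bookkeeping and it is correct. One cosmetic point: the quantities $\Lambda_1^b,\dots,\Lambda_{22}^b$ in the lemma are defined directly by the limit system \eqref{ODELam1d}--\eqref{ODELam22d}, so you should cite those equations rather than describing them as \eqref{ODELam1dN}--\eqref{ODELam22dN} ``with the $O(1/N)$ terms dropped,'' although the two descriptions agree.
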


\begin{proof}
\eqref{LbL1} is already stated in the proof of  Lemma \ref{lemma:Lam1d}.
By considering the ODE of $\Lambda_2^b + \Lambda_{11}^b - \Lambda_2$
and next applying Gr\"{o}wnwall's lemma, we establish
$\sup_{0\le t\le T}|\Lambda_2^b(t) + \Lambda_{11}^b(t) - \Lambda_2(t)|=0$, which implies \eqref{LbL2}.

Define $\zeta(t)=\Lambda_{2}(t)+  \Lambda_{2}^{T}(t)+  \Lambda_{4}(t) $. By use of \eqref{ODELam2}, \eqref{ODELam4}, and
 \eqref{ODELam2d}--\eqref{ODELam22d} we write the ODEs:
\begin{align*}
&\dot{\zeta} (t) = \Phi( \Lambda_2, \Lambda_4),\\
&\dot{\zeta}^b(t)= \Phi^b(\Lambda_2^b, \Lambda_4^b, \Lambda^b_{11}, \Lambda_{12}^b, \Lambda_{22}^b),
\end{align*}
where the two vector fields are not fully displayed  but can be  easily determined.
Note that $\Lambda_1(t)$ and $(\Lambda_1(t), \Lambda_2(t))$ appear in $\Phi$ and $\Phi^b$, respectively, and are treated as known functions of time.  Letting $H=(\mathcal{R}_1(\Lambda_1))^{-1}$,  we have
\begin{align}
\Phi-\Phi^b =& (\zeta^b-\zeta)(A+G) +(A+G)^T (\zeta^b-\zeta)\\
 &+ (\Theta+\Theta_1)^T {B}_0^T (\zeta^b-\zeta){B}_0(\Theta+\Theta_1)\nonumber\\
 &- \Lambda_1 BHB^T (\zeta^b-\zeta) -(\zeta^b-\zeta) BHB^T \Lambda_1\nonumber\\
 &- (\zeta^b-\zeta) BHB^T \Lambda_2 + \Delta_\Phi, \nonumber
\end{align}
where we have used \eqref{LbL1}--\eqref{LbL2} to
derive the last line to get
\begin{align}
\Delta_\Phi = & \Lambda_2^T BHB^T \Lambda_2^T +  \Lambda_2^T BHB^T \Lambda_4   \nonumber\\ &
  - \Theta_1^T B^T  (\Lambda^{bT}_2   +\Lambda^{bT}_{11} + \Lambda^{bT}_{12} +\Lambda^b_4  +\Lambda^{bT}_{22} +\Lambda^b_{12}). \nonumber
\end{align}
By use of the definition of  $\zeta$ and \eqref{LbL2}, we obtain
$\Delta_\Phi= \Lambda_2^T BHB^T (\zeta-\zeta^b)$. 
By the ODE of $\zeta - \zeta^b$ and Gr\"{o}nwall's lemma, we obtain
$\sup_{t\in [0,T]}|\zeta(t) - \zeta^b(t)|=0$.
\end{proof}

Although the system \eqref{ODELam1dN}--\eqref{ODELam22dN}
has been constructed based on \eqref{ODEP1d}--\eqref{ODEP2d}, it can stand alone for its existence analysis without using the latter.
\begin{lemma}
\label{lm:solLamdN}
Under Assumption~\ref{assm:solLam}, there exists $N_1>0$ such that for all $N\geq N_1$,
\eqref{ODELam1dN}--\eqref{ODELam22dN} admits a solution $(\Lambda_1^{bN}, \cdots, \Lambda_{22}^{bN})$ on $[0, T]$ satisfying
\begin{align}\label{R1B0e0}
(\mathcal{R}_1(\Lambda_1^{bN} ) + B_0^T S^{bN} B_0/N^2)(t)>\epsilon_0 I , \quad \forall  t\in [0,T],
\end{align}
for some small constant $\epsilon_0>0$. In addition, $\sup_{t\in[0, T]} |\Lambda_\iota^{bN} - \Lambda_\iota^b|=O(1/N)$ for $\iota=1, 2, \cdots, 22$, where $\Lambda_1^b$, $\cdots$, $\Lambda_{22}^b$ are given in Appendix~\ref{appendix:PfsolLamdN}.
\end{lemma}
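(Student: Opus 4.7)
The plan is to apply the tube-localization strategy used in the sufficiency portion of Theorem~\ref{thm:NSAS} and in Corollary~\ref{cor:NSAS:LamN-Lam}, now to the perturbed system \eqref{ODELam1dN}--\eqref{ODELam22dN} versus its limit \eqref{ODELam1d}--\eqref{ODELam22d}. By Assumption~\ref{assm:solLam} combined with Lemma~\ref{lm:LambLam124} (in particular $\Lambda_1^b = \Lambda_1$), the limit system admits a solution $(\Lambda_1^b,\ldots,\Lambda_{22}^b)$ on $[0,T]$ with $\mathcal{R}_1(\Lambda_1^b(t)) \ge c_0 I$ for some $c_0 > 0$. I would introduce a thin tube
\begin{equation*}
\mathcal{C}^b = \Bigl\{(t, Z_1,\ldots,Z_{22}) \in [0,T]\times \mathcal{S}^n \times \mathbb{R}^{n\times n}\times \cdots \times \mathcal{S}^n : \textstyle\sum_{\iota=1}^{22} |Z_\iota - \Lambda_\iota^b(t)| < \delta_0 \Bigr\}
\end{equation*}
with $\delta_0 > 0$ chosen small enough that $\mathcal{R}_1(Z_1) \ge (c_0/2) I$ throughout $\mathcal{C}^b$. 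This ensures that $(\mathcal{R}_1(Z_1))^{-1}$ is bounded on $\mathcal{C}^b$ and hence that the vector field of \eqref{ODELam1dN}--\eqref{ODELam22dN} is Lipschitz there, uniformly in $N$.

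The next step is to control the perturbations. Inside $\mathcal{C}^b$, the $\Lambda^{bN}_\iota$ are uniformly bounded, so the explicit expressions for $g_1^{bN},\ldots,g_{22}^{bN}$ in Appendix~\ref{appendix:gdN} (together with the expansion $(I + H^{bN} B_0^T S^{bN} B_0/N)^{-1} = I + O(1/N)$, which is well-defined on the tube) yield $\sup_{t \in [0,T], 1\le \iota\le 22} |g_\iota^{bN}(t)| = O(1/N)$, and the terminal conditions of \eqref{ODELam1dN}--\eqref{ODELam22dN} deviate from those of \eqref{ODELam1d}--\eqref{ODELam22d} by $O(1/N)$. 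I would then run the backward ODE \eqref{ODELam1dN}--\eqref{ODELam22dN} starting from $t = T$ and let $t^*$ denote the infimum of the sub-interval on which the solution stays in $\mathcal{C}^b$. On $[t^*, T]$, applying Gr\"onwall's lemma to $\Delta_\iota^N := \Lambda_\iota^{bN} - \Lambda_\iota^b$, using the uniform Lipschitz bound and the $O(1/N)$ forcing and endpoint mismatch, delivers $\sum_\iota |\Delta_\iota^N(t)| \le C/N$. Choosing $N_1$ so that $C/N_1 < \delta_0/2$ forces $t^* = 0$, giving existence on the full interval and the advertised $O(1/N)$ estimate.

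Finally, the positivity claim \eqref{R1B0e0} is immediate: by the boundedness of $(\Lambda_1^{bN},\ldots,\Lambda_{22}^{bN})$ on $\mathcal{C}^b$, one has $B_0^T S^{bN} B_0/N^2 = O(1/N^2)$, so $\mathcal{R}_1(\Lambda_1^{bN}) + B_0^T S^{bN} B_0/N^2 \ge (c_0/2) I - O(1/N^2) \ge \epsilon_0 I$ for, say, $\epsilon_0 = c_0/3$ and $N \ge N_1$ enlarged if necessary. The main obstacle will be the bookkeeping: the system \eqref{ODELam1dN}--\eqref{ODELam22dN} is long and its perturbation terms $g_\iota^{bN}$ involve several nested matrix inverses, so a careful verification of local Lipschitz constants and the $O(1/N)$ bound on $g_\iota^{bN}$ in the tube is unavoidable. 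However, this is strictly parallel to the argument carried out in \cite{HY2020b} and in the sufficiency proof of Theorem~\ref{thm:NSAS}, so no conceptually new difficulty is expected.
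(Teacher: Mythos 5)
Your proposal is correct and follows essentially the same route as the paper: the authors likewise treat \eqref{ODELam1dN}--\eqref{ODELam22dN} as a small perturbation of the limit system \eqref{ODELam1d}--\eqref{ODELam22d}, invoke the thin-tube method from the sufficiency proof of Theorem~\ref{thm:NSAS} for global existence, obtain the $O(1/N)$ bound via Gr\"onwall as in Corollary~\ref{cor:NSAS:LamN-Lam}, and deduce \eqref{R1B0e0} from $\mathcal{R}_1(\Lambda_1^b)>0$ by continuity. Your write-up simply makes explicit the bootstrapping step (Gr\"onwall forcing the trajectory to stay in the tube) that the paper leaves implicit.
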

\begin{proof}
We  view  \eqref{ODELam1dN}--\eqref{ODELam22dN} as a slightly perturbed version of  \eqref{ODELam1d}--\eqref{ODELam22d}.  By the same thin tube method as in the sufficiency proof of Theorem~\ref{thm:NSAS},
we establish the existence and uniqueness of  a solution of
\eqref{ODELam1dN}--\eqref{ODELam22dN} for all sufficiently large $N$.
We may ensure \eqref{R1B0e0} due to $\mathcal{R}_1(\Lambda_1^b) >0$ for all $t\in [0,T]$ and a continuity argument.
 The error bound of $O(1/N)$ is obtained by applying Gr\"{o}nwall's lemma as in Corollary \ref{cor:NSAS:LamN-Lam}. \end{proof}

\begin{remark}
\label{rmk:bdLamdN}
Let $\psi^N$ stand for any of the functions $\Lambda_1^{bN}$,
$\Lambda_2^{bN}$, $\Lambda_3^{bN}$, $\Lambda_4^{bN}$,
$\Lambda_{11}^{bN}$, $\Lambda_{12}^{bN}$ and
$\Lambda_{22}^{bN}$.
Then $\sup_{N\geq N_1, 0 \leq t \leq T} |\psi^N| \leq C$ for some fixed constant $C$.
\end{remark}

\begin{remark}
\label{rmk:gdNO(1/N)}
Let $h^N$ stand for any of the functions $g_1^{bN}$, $g_2^{bN}$,
$g_3^{bN}$, $g_4^{bN}$, $g_{11}^{bN}$, $g_{12}^{bN}$ and
$g_{22}^{bN}$. Then $\sup_{t\in [0, T]} |h^N(t)|=O(1/N)$.
\end{remark}

\begin{lemma} \label{lm:exissolP1P2d}
Under Assumption \ref{assm:solLam},
the ODE system  \eqref{ODEP1d}--\eqref{ODEP2d} has a solution on $[0,T]$ for all $N\ge N_1$, where $N_1$ is specified in Lemma \ref{lm:solLamdN}. \end{lemma}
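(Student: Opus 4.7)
The plan is to invert the reduction that produced \eqref{ODELam1dN}--\eqref{ODELam22dN} from \eqref{ODEP1d}--\eqref{ODEP2d}, following the template of Lemma~\ref{lm:solPLam}(ii). For $N\geq N_1$ I would take $(\Lambda_1^{bN},\ldots,\Lambda_{22}^{bN})$ on $[0,T]$ provided by Lemma~\ref{lm:solLamdN}, set $\Pi_1^{bN}=\Lambda_1^{bN}$, $\Pi_2^{bN}=\Lambda_2^{bN}/N$, $\Pi_3^{bN}=\Lambda_3^{bN}/N^2$, $\Pi_4^{bN}=\Lambda_4^{bN}/N^2$, $\Pi_{11}^{bN}=\Lambda_{11}^{bN}$, $\Pi_{12}^{bN}=\Lambda_{12}^{bN}/N$, and then assemble $\mathbf{P}_1^b(t)$ and $\mathbf{P}_{12}^b(t)$ blockwise according to the patterns \eqref{P1dsubmat}--\eqref{P12dsubmat}, while setting $\mathbf{P}_2^b(t)=\Lambda_{22}^{bN}(t)$. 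Writing $\mathbf{Q}_{1f}=\mathbf{K}_{1f}^T Q_f\mathbf{K}_{1f}$ blockwise shows that the terminal conditions in \eqref{ODELam1dN}--\eqref{ODELam4dN} are precisely those required so that the assembled $\mathbf{P}_1^b(T)$ coincides with $\mathbf{Q}_{1f}$; the terminal values $\mathbf{P}_{12}^b(T)=0$ and $\mathbf{P}_2^b(T)=0$ hold by construction.

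The next step would be to verify the positivity constraint in \eqref{ODEP1d}. Using $\mathbf{B}_1=e_1^N\otimes B_1$ and $\mathbf{B}_0=\mathbf{1}_{N\times 1}\otimes B_0/N$ together with the block form of $\mathbf{P}_1^b$, a short calculation gives
\[
R+\mathbf{B}_1^T\mathbf{P}_1^b\mathbf{B}_1+\mathbf{B}_0^T\mathbf{P}_1^b\mathbf{B}_0=\mathcal{R}_1(\Lambda_1^{bN})+B_0^T S^{bN} B_0/N^2,
\]
where $S^{bN}$ is the sum of the $n\times n$ blocks of $\mathbf{P}_1^b$, which is exactly the $S^{bN}$ appearing in the derivation of \eqref{ODELam1dN}--\eqref{ODELam22dN}. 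By \eqref{R1B0e0} of Lemma~\ref{lm:solLamdN}, the right-hand side is bounded below by $\epsilon_0 I$ on $[0,T]$, so the constraint in \eqref{ODEP1d} is satisfied.

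The remaining and main step is to verify that the assembled triple $(\mathbf{P}_1^b,\mathbf{P}_{12}^b,\mathbf{P}_2^b)$ solves \eqref{ODEP1d}--\eqref{ODEP2d}. The crucial observation is that the vector field of \eqref{ODEP1d}--\eqref{ODEP2d} is invariant under permutations of the indices $2,\ldots,N$, because players $\mathcal{A}_2,\ldots,\mathcal{A}_N$ all apply the same decentralized feedback $-\Theta\mathbf{e}_iX-\Theta_1\overline{X}$; hence the flow preserves the block pattern \eqref{P1dsubmat}--\eqref{P12dsubmat}. Substituting the assembled triple into \eqref{ODEP1d}--\eqref{ODEP2d} and applying the rescaling \eqref{PdLam} reproduces exactly \eqref{ODELam1dN}--\eqref{ODELam22dN}, which is already solved by $(\Lambda_1^{bN},\ldots,\Lambda_{22}^{bN})$. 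The only real obstacle is the bookkeeping in this final reduction, but it amounts to re-reading the derivation of \eqref{ODELam1dN}--\eqref{ODELam22dN} in reverse: the perturbation terms $g_k^{bN}$ were precisely chosen to absorb every finite-$N$ discrepancy, so no calculation beyond matching coefficients is required. This yields the existence of a solution of \eqref{ODEP1d}--\eqref{ODEP2d} on $[0,T]$ for all $N\geq N_1$.
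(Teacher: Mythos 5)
Your proposal is correct and follows essentially the same route as the paper: assemble $\mathbf{P}_1^b$ from the solution $(\Lambda_1^{bN},\ldots,\Lambda_{22}^{bN})$ of Lemma~\ref{lm:solLamdN} via \eqref{P1dsubmat} and \eqref{PdLam}, verify \eqref{ODEP1d} directly, and obtain the positivity constraint from the identity $R+\mathbf{B}_1^T\mathbf{P}_1^b\mathbf{B}_1+\mathbf{B}_0^T\mathbf{P}_1^b\mathbf{B}_0=\mathcal{R}_1(\Lambda_1^{bN})+B_0^TS^{bN}B_0/N^2$ together with \eqref{R1B0e0}. The only cosmetic difference is that the paper, having secured $\mathbf{P}_1^b$, treats \eqref{ODEP12d}--\eqref{ODEP2d} simply as linear ODEs that are then uniquely solvable, whereas you reconstruct $\mathbf{P}_{12}^b$ and $\mathbf{P}_2^b$ from the $\Lambda^{bN}$'s as well; both are valid.
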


\begin{proof}
After obtaining $(\Lambda_1^{bN}, \cdots, \Lambda_{22}^{bN})$ by Lemma \ref{lm:solLamdN}, we define $\mathbf{P}_1^b$ using \eqref{P1dsubmat} and \eqref{PdLam}.
Then we can directly verify that $\mathbf{P}_1^b$ satisfies \eqref{ODEP1d}, where $ R + \mathbf{B}_1^T \mathbf{P}_1^b(t) \mathbf{B}_1 + \mathbf{B}_0^T \mathbf{P}_1^b(t) \mathbf{B}_0 >0 $  holds for all $t\in [0,T]$ since this matrix is equal to the term $  \mathcal{R}_1(\Lambda_1^{bN} ) + B_0^T S^{bN} B_0/N^2 $ appearing in  \eqref{ODELam1dN}. Note that \eqref{R1B0e0} holds.
Then we further uniquely solve \eqref{ODEP12d}--\eqref{ODEP2d}.
\end{proof}

Combining  Lemma
\ref{lm:exissolP1P2d} with Proposition \ref{prop:br} and Lemma \ref{lm:Pdsubmat}, we have the following facts. Under Assumption \ref{assm:solLam}, for all sufficiently large $N$,  the best response control problem  for player $\mathcal{A}_1$ has a solution. Next, the value function of the best response control problem can be specified using
\eqref{ODELam1dN}--\eqref{ODELam22dN}, which has a well defined solution.

\begin{lemma}
\label{lm:Lam1dN-Lam1}
$\sup_{t\in[0, T]}|\Lambda_1^{bN}(t) - \Lambda_1(t)| = O(1/N)$.
\end{lemma}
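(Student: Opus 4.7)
The cleanest route is to piggyback on two already-proven facts and finish by the triangle inequality. Lemma~\ref{lm:solLamdN} supplies the coordinate-wise error bound $\sup_{t\in[0,T]}|\Lambda_1^{bN}(t)-\Lambda_1^b(t)|=O(1/N)$, valid for every $N\ge N_1$. The identity \eqref{LbL1} inside Lemma~\ref{lm:LambLam124} tells us that $\Lambda_1^b(t)=\Lambda_1(t)$ on $[0,T]$. Putting these together,
\[
\sup_{t\in[0,T]}|\Lambda_1^{bN}(t)-\Lambda_1(t)|
\;\le\;\sup_{t\in[0,T]}|\Lambda_1^{bN}(t)-\Lambda_1^b(t)|+\sup_{t\in[0,T]}|\Lambda_1^b(t)-\Lambda_1(t)|
\;=\;O(1/N),
\]
which is exactly the claim.

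\textbf{Alternative self-contained argument.} If one prefers not to route through $\Lambda_1^b$, the same bound can be obtained directly from a backward Gr\"onwall estimate on the Riccati ODEs. Setting $e_N(t):=\Lambda_1^{bN}(t)-\Lambda_1(t)$ and subtracting \eqref{ODELam1} from \eqref{ODELam1dN} (both governed by the same nonlinear field $\Psi_1$), I would write
\[
\dot e_N(t)=\Psi_1(\Lambda_1^{bN}(t))-\Psi_1(\Lambda_1(t))+g_1^{bN}(t),
\]
with terminal discrepancy
\[
e_N(T)=(I-\Gamma_f^{T}/N)Q_f(I-\Gamma_f/N)-Q_f = -\Gamma_f^{T}Q_f/N-Q_f\Gamma_f/N+\Gamma_f^{T}Q_f\Gamma_f/N^{2},
\]
so $|e_N(T)|=O(1/N)$. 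By Remark~\ref{rmk:bdLamdN} the family $\{\Lambda_1^{bN}\}_{N\ge N_1}$ is uniformly bounded on $[0,T]$, and by~\eqref{R1B0e0} the matrix $\mathcal{R}_1(\Lambda_1^{bN})$ stays uniformly bounded below; since $\Lambda_1$ is also bounded with $\mathcal{R}_1(\Lambda_1)>0$, the map $\Psi_1$ is Lipschitz on a compact region containing both trajectories. This gives $|\Psi_1(\Lambda_1^{bN})-\Psi_1(\Lambda_1)|\le C|e_N|$ uniformly on $[0,T]$. Combined with $\sup_{t\in[0,T]}|g_1^{bN}(t)|=O(1/N)$ from Remark~\ref{rmk:gdNO(1/N)}, backward Gr\"onwall's lemma applied on $[t,T]$ yields $\sup_{t\in[0,T]}|e_N(t)|=O(1/N)$.

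\textbf{Main obstacle.} There is essentially no obstacle beyond verifying the uniform Lipschitz property of $\Psi_1$, which hinges on keeping $\mathcal{R}_1(\Lambda_1^{bN})$ uniformly away from being singular; this is already secured by the thin-tube construction in Lemma~\ref{lm:solLamdN}. Thus the entire proof reduces either to a one-line citation of Lemmas~\ref{lm:solLamdN} and~\ref{lm:LambLam124}, or to a routine Gr\"onwall bookkeeping with an $O(1/N)$ terminal error and an $O(1/N)$ source term.
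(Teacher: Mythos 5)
Your primary argument is exactly the paper's own proof: the paper disposes of this lemma in one line by citing Lemma~\ref{lm:solLamdN} together with the identity \eqref{LbL1}, precisely the triangle-inequality combination you describe. Your alternative Gr\"onwall argument is also sound (it is essentially how Corollary~\ref{cor:NSAS:LamN-Lam} and Lemma~\ref{lm:checkLam1-Lam1} are proved elsewhere in the paper), though it still leans on Lemma~\ref{lm:solLamdN} for the existence and uniform boundedness of $\Lambda_1^{bN}$ that make $\Psi_1$ uniformly Lipschitz and $g_1^{bN}=O(1/N)$.
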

\begin{proof} The lemma follows from Lemma \ref{lm:solLamdN} and \eqref{LbL1}.
\end{proof}

\begin{lemma}
\label{lm:checkLam1-Lam1}
$\sup_{t\in[0, T]} | \check\Lambda^N_1(t) - \Lambda_1(t)| = O(1/N)$.
\end{lemma}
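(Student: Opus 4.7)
The plan is to form the difference $\Delta^N(t) := \check\Lambda_1^N(t) - \Lambda_1(t)$, derive a linear matrix ODE for it with bounded coefficients and small inhomogeneity, and then invoke Gr\"onwall's lemma exactly as in Corollary~\ref{cor:NSAS:LamN-Lam}. The key algebraic observation is that, upon substituting the feedback form $\Theta = (\mathcal{R}_1(\Lambda_1))^{-1} B^T \Lambda_1$, equation~\eqref{ODELam1} can be rewritten in a form that mirrors \eqref{ODEcheckLam1}.

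First I would verify the identity $\Lambda_1 B H B^T \Lambda_1 = \Theta^T \mathcal{R}_1(\Lambda_1) \Theta = \Theta^T B^T \Lambda_1$, which follows directly from $\Theta = H B^T \Lambda_1$ and the symmetry of $H$ and $\Lambda_1$. Using this, the right-hand side of \eqref{ODELam1} may be regrouped as
\begin{align*}
-\dot\Lambda_1 = \Lambda_1 (A - B\Theta) + (A - B\Theta)^T \Lambda_1 + \Theta^T (R + B_1^T \Lambda_1 B_1) \Theta + Q,
\end{align*}
which has precisely the same structure as \eqref{ODEcheckLam1}, except that the closed-loop term contains $\Lambda_1$ instead of $\check\Lambda_1^N$ and no perturbation $\check g_1^N$ is present.

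Subtracting this rewritten form of \eqref{ODELam1} from \eqref{ODEcheckLam1} then gives the linear matrix ODE
\begin{align*}
-\dot\Delta^N = \Delta^N (A - B\Theta) + (A - B\Theta)^T \Delta^N + \Theta^T B_1^T \Delta^N B_1 \Theta + \check g_1^N(t),
\end{align*}
with terminal condition $\Delta^N(T) = (I - \Gamma_f^T/N) Q_f (I - \Gamma_f/N) - Q_f$, which is $O(1/N)$ by inspection. The coefficient matrices $A - B\Theta$ and $B_1 \Theta$ are continuous on $[0,T]$ because $\Theta$ is continuous under Assumption~\ref{assm:solLam}, hence uniformly bounded on $[0,T]$. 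By Remark~\ref{rmk:checkgO(1/N)}, $\sup_{t \in [0,T]} |\check g_1^N(t)| = O(1/N)$. Integrating the ODE from $t$ to $T$ and applying Gr\"onwall's lemma backward in time yields $\sup_{t \in [0,T]} |\Delta^N(t)| = O(1/N)$, which is the desired conclusion.

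There is essentially no obstacle here beyond the bookkeeping of the algebraic rewriting in the first step; once \eqref{ODELam1} is expressed in the ``completed-square'' feedback form, the difference ODE is a genuinely linear equation in $\Delta^N$ with $O(1/N)$ data, and Gr\"onwall does the rest. The only mild subtlety is that the term $\Theta^T B_1^T \Delta^N B_1 \Theta$ depends linearly on $\Delta^N$ (not quadratically), which is what makes Gr\"onwall directly applicable.
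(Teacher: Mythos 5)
Your proposal is correct and follows essentially the same route as the paper: form the difference $\check\Lambda_1^N-\Lambda_1$, observe it satisfies a linear matrix ODE with bounded coefficients, $O(1/N)$ terminal data, and an $O(1/N)$ inhomogeneity $\check g_1^N$ (Remark~\ref{rmk:checkgO(1/N)}), then apply Gr\"onwall. The only difference is that you spell out the algebraic identity $\Lambda_1 BHB^T\Lambda_1=\Theta^T\mathcal{R}_1(\Lambda_1)\Theta$ needed to put \eqref{ODELam1} in the same closed-loop form as \eqref{ODEcheckLam1}, which the paper leaves implicit.
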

\begin{proof}
Taking the difference of \eqref{ODEcheckLam1} and \eqref{ODELam1} gives
\begin{align}
\begin{cases}
\frac{d}{dt} (\check\Lambda_1^N - \Lambda_1)
= - \Theta^T B_1^T (\check\Lambda_1^N - \Lambda_1) B_1\Theta
- (\check\Lambda_1^N - \Lambda_1) (A-B\Theta)  \\
\hspace{2.4cm}
 - (A-B\Theta)^T (\check\Lambda_1^N - \Lambda_1) - \check{g}_1^N , \\
\check\Lambda_1^N(T) - \Lambda_1(T) =  (I - \Gamma_f^T/N) Q_f (I - \Gamma_f/N) - Q_f .
\end{cases} \notag
\end{align}
By Remark \ref{rmk:checkgO(1/N)}, $\sup_{t\in [0, T]} |\check{g}_1^N(t)|=O(1/N)$. The desired result follows from Gr\"{o}nwall's lemma.
\end{proof}

\begin{lemma}
\label{lm:Lam211dN-Lam2}
$\sup_{t\in[0, T]} |\Lambda_2^{bN}(t) + \Lambda_{11}^{bN}(t) - \Lambda_2(t) | = O(1/N)$.
\end{lemma}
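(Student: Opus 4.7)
The proof plan is to reduce the claim to two ingredients that are already in hand: the identity from Lemma~\ref{lm:LambLam124} and the uniform approximation of the perturbed system by its unperturbed limit from Lemma~\ref{lm:solLamdN}. Concretely, by the triangle inequality,
\begin{align*}
|\Lambda_2^{bN}(t)+\Lambda_{11}^{bN}(t)-\Lambda_2(t)|
&\le |\Lambda_2^{bN}(t)-\Lambda_2^b(t)|+|\Lambda_{11}^{bN}(t)-\Lambda_{11}^b(t)|\\
&\quad +|\Lambda_2^b(t)+\Lambda_{11}^b(t)-\Lambda_2(t)|.
\end{align*}
The first two differences are uniformly $O(1/N)$ on $[0,T]$ by Lemma~\ref{lm:solLamdN} (applied with $\iota=2$ and $\iota=11$), and the third term vanishes identically in $t$ by \eqref{LbL2} in Lemma~\ref{lm:LambLam124}. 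Summing, the left hand side is $O(1/N)$ uniformly in $t$, which is the claim.

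No Gr\"onwall argument is needed at this stage because the required ODE comparison has already been carried out twice upstream: once in Lemma~\ref{lm:solLamdN} to get the $O(1/N)$ rate for each coordinate of the perturbed system $(\Lambda_1^{bN},\ldots,\Lambda_{22}^{bN})$ versus the unperturbed $(\Lambda_1^b,\ldots,\Lambda_{22}^b)$, and once in Lemma~\ref{lm:LambLam124} to identify $\Lambda_2^b+\Lambda_{11}^b$ with $\Lambda_2$ via an ODE of the difference with zero terminal data. The only real work was in those two previous results, and the present lemma is just their composition. In particular, the fact that the individual quantities $\Lambda_2^{bN}$ and $\Lambda_{11}^{bN}$ are separately only $O(1)$ near $\Lambda_2^b$ and $\Lambda_{11}^b$ but their sum is close to $\Lambda_2$ is exactly what \eqref{LbL2} encodes.

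If I were alternatively to avoid invoking Lemma~\ref{lm:LambLam124} directly, I would set $\Delta^N(t)\coloneqq \Lambda_2^{bN}(t)+\Lambda_{11}^{bN}(t)-\Lambda_2(t)$ and derive an ODE for $\Delta^N$ by adding \eqref{ODELam2dN} and \eqref{ODELam11dN} and subtracting \eqref{ODELam2}, then apply Gr\"onwall. The cancellations between the $\Lambda_2^{bN}$-equation and the $\Lambda_{11}^{bN}$-equation that make the sum linear in $\Delta^N$ (modulo $O(1/N)$ remainders coming from $g_2^{bN}$, $g_{11}^{bN}$, and the terminal conditions $\Lambda_2^{bN}(T)+\Lambda_{11}^{bN}(T)=-(I-\Gamma_f^T/N)Q_f\Gamma_f$ versus $\Lambda_2(T)=-Q_f\Gamma_f$) are precisely what was verified in Lemma~\ref{lm:LambLam124}; so this route would duplicate that work. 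The expected main obstacle is therefore not in this lemma itself but in the earlier Lemma~\ref{lm:LambLam124}, which is where the structural identity $\Lambda_2^b+\Lambda_{11}^b=\Lambda_2$ had to be established; given that, the present statement is immediate.
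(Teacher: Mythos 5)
Your proof is correct and follows exactly the paper's route: the paper's own proof is the one-line observation that the claim follows from Lemma~\ref{lm:solLamdN} and \eqref{LbL2}, which is precisely the triangle-inequality decomposition you spell out.
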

\begin{proof} The lemma follows from Lemma \ref{lm:solLamdN} and \eqref{LbL2}.
\end{proof}

\begin{lemma}
\label{lm:checkYN-YdN}
Let $\check{Y}^N$ be defined by \eqref{checkYN}, and denote
\begin{align}
 & Y^{b N} := \Lambda_1^{b N} + \Lambda_2^{b N}
 + (\Lambda_2^{b N})^T + \Lambda_4^{b N}
 + \Lambda_{11}^{b N} + ( \Lambda_{11}^{b N} )^T
 + \Lambda_{12}^{b N} + (\Lambda_{12}^{bN})^T
+ \Lambda_{22}^{b N}  . \notag
\end{align}
Then $\sup_{t\in[0, T]} |\check{Y}^N(t) - Y^{bN}(t)|=O(1/N)$.
\end{lemma}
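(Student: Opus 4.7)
The plan is to use the triangle inequality through a common limit. Define $Y := \Lambda_1 + \Lambda_2 + \Lambda_2^T + \Lambda_4$; I will show $\sup_{t\in[0,T]}|Y^{bN}(t) - Y(t)| = O(1/N)$ and $\sup_{t\in[0,T]}|\check Y^N(t) - Y(t)| = O(1/N)$ separately, and conclude by $|\check Y^N - Y^{bN}| \le |\check Y^N - Y| + |Y - Y^{bN}|$.

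The first of these is essentially already in hand. Lemma~\ref{lm:LambLam124} asserts $Y^b = \Lambda_1^b + \zeta^b = \Lambda_1 + \Lambda_2 + \Lambda_2^T + \Lambda_4 = Y$, and Lemma~\ref{lm:solLamdN} supplies the componentwise bound $\sup_t|\Lambda_\iota^{bN}(t) - \Lambda_\iota^b(t)| = O(1/N)$ for each index $\iota$; summing the indices appearing in $Y^{bN}$ gives $\sup_t|Y^{bN}(t) - Y(t)| = O(1/N)$.

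The second estimate follows the same pattern applied to the decentralized-strategy system. Let $(\check\Lambda_1, \ldots, \check\Lambda_{22})$ denote the unique solution of the first-order linear system obtained from \eqref{ODEcheckLam1}--\eqref{ODEcheckLam22} after deleting every perturbation $\check g_k^N$ and letting $N \to \infty$ in the terminal conditions. Since $\sup_t|\check g_k^N| = O(1/N)$ by Remark~\ref{rmk:checkgO(1/N)}, the terminal conditions differ by $O(1/N)$, and the drift coefficients (functions of the fixed maps $\Theta$ and $\Theta_1$) are bounded on $[0,T]$, Gr\"onwall's inequality yields $\sup_t|\check\Lambda_\iota^N - \check\Lambda_\iota| = O(1/N)$ componentwise, so $\sup_t|\check Y^N - \check Y| = O(1/N)$ for $\check Y := \check\Lambda_1 + \check\Lambda_2 + \check\Lambda_2^T + \check\Lambda_4 + \check\Lambda_{11} + \check\Lambda_{11}^T + \check\Lambda_{12} + \check\Lambda_{12}^T + \check\Lambda_{22}$.

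It remains to establish the algebraic identity $\check Y = Y$, the check-system analog of Lemma~\ref{lm:LambLam124}. I propose to prove this in three stages: (i) using $\Theta = (\mathcal{R}_1(\Lambda_1))^{-1}B^T \Lambda_1$ and the resulting simplification $\Theta^T(R + B_1^T \Lambda_1 B_1)\Theta = \Lambda_1 B H B^T \Lambda_1$, one checks that $\Lambda_1$ itself satisfies the limiting ODE for $\check\Lambda_1$ with matching terminal value $Q_f$, so $\check\Lambda_1 = \Lambda_1$; (ii) after substituting $\check\Lambda_1 = \Lambda_1$, $\Lambda_2$ satisfies the limiting linear ODE for $\check\Lambda_2 + \check\Lambda_{11}$ with terminal value $-Q_f \Gamma_f$, giving $\check\Lambda_2 + \check\Lambda_{11} = \Lambda_2$; (iii) summing the remaining limiting ODEs for $\check\Lambda_4$, $\check\Lambda_{12}$, $\check\Lambda_{12}^T$, and $\check\Lambda_{22}$ and invoking (i)--(ii), one verifies that $\check\Lambda_4 + \check\Lambda_{12} + \check\Lambda_{12}^T + \check\Lambda_{22}$ and $\Lambda_4$ solve the same linear ODE with identical terminal datum $\Gamma_f^T Q_f \Gamma_f$, hence they coincide. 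Combining the three stages yields $\check Y = Y$, and the triangle inequality finishes the proof. The main difficulty will be the bookkeeping in stage (iii), where cross terms involving $B_0$, the quadratic form $\Theta_1^T(R + B_1^T \Lambda_1 B_1)\Theta_1$, and the $G$-terms must collapse into $\Psi_4(\Lambda_1, \Lambda_2, \Lambda_4)$; this is exactly parallel to the $\zeta^b$ cancellation carried out in the proof of Lemma~\ref{lm:LambLam124}.
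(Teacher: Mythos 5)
Your proposal is correct, but it follows a genuinely different route from the paper. The paper never introduces a limit system for the check variables; instead it writes down the ODE satisfied by the difference $\check{Y}^N - Y^{bN}$ directly, observes that its inhomogeneous part is $O(1/N)$ (using $\sup_t|\Lambda_1^{bN}-\Lambda_1|=O(1/N)$ and $\sup_t|\Lambda_1^{bN}+\Lambda_2^{bN}+\Lambda_{11}^{bN}-\Lambda_1-\Lambda_2|=O(1/N)$ to kill the quadratic cross terms, plus the $O(1/N)$ bounds on all perturbations $\check g^N_\cdot$ and $g^{bN}_\cdot$), and concludes by Gr\"onwall with zero terminal mismatch. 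You instead pass both families through the common limit $Y=\Lambda_1+\Lambda_2+\Lambda_2^T+\Lambda_4$: the $b$-side is immediate from Lemmas \ref{lm:solLamdN} and \ref{lm:LambLam124}, while the check side requires you to introduce the limit of \eqref{ODEcheckLam1}--\eqref{ODEcheckLam22} and prove the algebraic identity $\check Y = Y$ from scratch. I checked that your three stages do close: stage (i) follows from $\Theta^T(R+B_1^T\Lambda_1B_1)\Theta=\Lambda_1BHB^T\Lambda_1$; in stage (ii) the sum $Z=\check\Lambda_2+\check\Lambda_{11}$ satisfies $-\dot Z = \Lambda_1G+Z(A+G-B(\Theta+\Theta_1))+(A-B\Theta)^TZ-Q\Gamma$, which is exactly the $\Lambda_2$ equation rewritten; and in stage (iii) the $B_0$-quadratic terms assemble into $(\Theta+\Theta_1)^TB_0^T(\Lambda_1+\Lambda_2+\Lambda_2^T+W)B_0(\Theta+\Theta_1)$ with $W=\check\Lambda_4+\check\Lambda_{12}+\check\Lambda_{12}^T+\check\Lambda_{22}$, the $G$-terms collapse via stage (ii), and the leftover $-\Lambda_2^TB\Theta_1-\Theta_1^TB^T\Lambda_2+\Theta_1^T(R+B_1^T\Lambda_1B_1)\Theta_1=-\Lambda_2^TBHB^T\Lambda_2$, so $W$ and $\Lambda_4$ solve the same linear terminal-value problem. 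Your approach is longer in the bookkeeping but more modular, and it yields the additional explicit information $\lim_N\check Y^N=\Lambda_1+\Lambda_2+\Lambda_2^T+\Lambda_4$, which connects directly to the per-agent cost limit in \eqref{JiLam14mu}; the paper's difference-ODE argument avoids ever constructing the check-limit system. Be sure, when writing it up, to actually carry out stage (iii) rather than leaving it as an assertion of parallelism, and to note that the Gr\"onwall step for $\check\Lambda^N_\iota-\check\Lambda_\iota$ uses that the check system is affine with coefficients built only from the fixed bounded functions $\Theta,\Theta_1$, so the $O(1/N)$ perturbations and terminal mismatches propagate linearly.
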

\begin{proof}
Combining the ODEs \eqref{ODEcheckLam1}--\eqref{ODEcheckLam22} and
\eqref{ODELam1dN}--\eqref{ODELam22dN},
we obtain the following ODE of $\check{Y}^N - Y^{bN}$:
\begin{align*}
&\frac{d}{dt} ( \check{Y}^N - Y^{b N} )\\
&=  ( Y^{b N} - \check{Y}^N  ) (A+G -  B ( \Theta + \Theta_1 ) )
 + (A+G - B(\Theta + \Theta_1 ) )^T ( Y^{b N} - \check{Y}^N ) \notag \\
&
\quad+ (\Theta + \Theta_1 )^T
[ B_1^T (\Lambda_1^{b N} - \Lambda_1 ) B_1
 + B_0^T ( Y^{b N} - \check{Y}^N ) B_0 ]   ( \Theta + \Theta_1 )
 \notag \\
& \quad- ( \Lambda_1^{bN} + \Lambda_2^{bN}  + \Lambda_{11}^{bN} - \Lambda_1 - \Lambda_2 )^T B ( \mathcal{R}_1( \Lambda_1 ) )^{-1} B^T
 ( \Lambda_1^{bN} + \Lambda_2^{bN}  + \Lambda_{11}^{bN} - \Lambda_1 - \Lambda_2 )   \notag \\
&\quad  - (\Lambda_1 + \Lambda_2 )^T B ( \mathcal{R}_1 ( \Lambda_1 ) )^{-1} B_1^T (\Lambda_1^{bN} - \Lambda_1 ) B_1
  ( \mathcal{R}_1( \Lambda_1 ) )^{-1} B^T (\Lambda_1 + \Lambda_2 ) \notag \\
&\quad - (\Lambda_1^{bN} + \Lambda_2^{bN} + \Lambda_{11}^{bN} )^T B
 ( \mathcal{R}_1 ( \Lambda_1 ) )^{-1} B_1^T (\Lambda_1 - \Lambda_1^{bN} ) B_1 \cdot \\
&\qquad ( \mathcal{R}_1 (\Lambda_1^{bN} ) )^{-1} B^T
(\Lambda_1^{bN} + \Lambda_2^{bN} + \Lambda_{11}^{bN} )
+ \rho^N , \\
&\check{Y}^N(T) - Y^{b N}(T) = 0 ,
\end{align*}
where
\begin{align}
\rho^N := & -(\check{g}_1^N + \check{g}_2^N + \check{g}_2^{NT}
+ \check{g}_4^N + \check{g}_{11}^N + \check{g}_{11}^{NT}
+ \check{g}_{12}^N + \check{g}_{12}^{NT} + \check{g}_{22}^N)
\notag \\
& - [g_1^{bN} + g_2^{bN} + (g_2^{bN})^T + g_4^{bN} + g_{11}^{bN} + (g_{11}^{bN})^T + g_{12}^{bN} + (g_{12}^{bN})^T + g_{22}^{bN}] .
\notag
\end{align}

The coefficients  of the term $\check{Y}^N - Y^{bN}$ are bounded.
By Lemmas \ref{lm:Lam1dN-Lam1} and \ref{lm:Lam211dN-Lam2},
we have $\sup_{t\in[0, T]}|\Lambda_1^{bN}(t) - \Lambda_1(t)| = O(1/N)$
and
$\sup_{t\in[0, T]}| \Lambda_1^{bN} + \Lambda_2^{bN}  + \Lambda_{11}^{bN} - \Lambda_1 - \Lambda_2  | = O(1/N)$.
By Remarks \ref{rmk:checkgO(1/N)} and \ref{rmk:gdNO(1/N)}, we have that
$\sup_{t\in[0, T]} |\rho^N| = O(1/N)$.
The lemma is then proven by applying Gr\"{o}nwall's lemma to the integral form of the ODE of $\check{Y}^N - Y^{bN}$.
\end{proof}

\begin{proof}[Proof of Theorem~\ref{thm:O(1/N)Nash}]

When all other players $\mathcal{A}_i$, $2\leq i \leq N$, take the decentralized strategies $\check{u}_{-1} = (\check{u}_2, \cdots, \check{u}_N)$, we compare the cost of player $\mathcal{A}_1$ under $u_1^b$ with the cost under $\check{u}_1$.
The cost $J_1(u_1^b, \check{u}_{-1})$ of $\mathcal{A}_1$ is
\begin{align}
  J_1(u_1^b, \check{u}_{-1})
=& \mathbb{E}\Big[   V_1^b(0, X(0), \overline{X}(0))   \Big]   \notag \\
= & \mathbb{E} \Big[ X^T(0) \mathbf{P}_1^b(0)  X(0)
+ 2 X^T(0) \mathbf{P}_{12}^b(0)  \overline{X}(0)
 + \overline{X}^T(0)  \mathbf{P}_2^b(0)  \overline{X}(0) \Big]  ,
 \notag \\
=& \mu_0^T   Y^{bN}(0)  \mu_0
+  \Tr[  \Lambda_1^{bN}(0) \Sigma_0^1 ]
 + (1/N^2) \sum_{ i = 2}^N \Tr [ \Lambda_3^{bN}(0) \Sigma_0^i]    \label{J1Pd} \\
& + \mu_0^T \{  - (\Lambda_2^{bN}(0) + (\Lambda_2^{bN}(0))^T) /N +  \Lambda_3^{bN}(0) (N-1)/N^2 \notag \\
 & +  \Lambda_4^{bN}(0)  (2-3N)/N^2
  - (   \Lambda_{12}^{bN}(0)  + (\Lambda_{12}^{bN}(0))^T  ) /N
\}  \mu_0 .   \notag
\end{align}
The cost $J_1(\check{u}_1, \check{u}_{-1})$ can be obtained from \eqref{checkJiP}.
Then we have
\begin{align}\label{diff:Jd-Jcheck}
  J_1(u_1^b, \check{u}_{-1}) - J_1(\check{u}_1, \check{u}_{-1})
=& \mu_0^T [ Y^{bN}(0) - \check{Y}^N(0) ] \mu_0
+  \Tr[ (\Lambda_1^{bN}(0) - \check\Lambda_1^N(0)) \Sigma_0^1 ]
  \\
& + (1/N^2) \sum_{i=2}^N \Tr [(\Lambda_3^{bN}(0) - \check\Lambda_3^N(0)) \Sigma_0^i] + O(1/N), \notag
\end{align}
where we obtain the estimate $O(1/N)$ using Remarks \ref{rmk:bdcheckLam} and \ref{rmk:bdLamdN}.
By Lemma \ref{lm:checkYN-YdN}, we have
\begin{align}
|\mu_0^T [ Y^{bN}(0) - \check{Y}^N(0) ] \mu_0| = O(1/N).
\label{estmu0Y}
\end{align}
From Lemmas \ref{lm:Lam1dN-Lam1} and \ref{lm:checkLam1-Lam1}, we have $\sup_{t\in[0, T]} |\Lambda_1^{bN}(0) - \check\Lambda_1^N(0)| = O(1/N)$ and thus
\begin{align}
 |\Tr[ (\Lambda_1^{bN}(0) - \check\Lambda_1^N(0)) \Sigma_0^1 ] | =O(1/N) . \label{estLam1Sigma}
\end{align}
By Assumption \ref{assm:initialX} and Remarks \ref{rmk:bdcheckLam} and \ref{rmk:bdLamdN}, we have
\begin{align}
& (1/N^2)\Big| \sum_{i=2}^N \Tr [(\Lambda_3^{bN}(0) - \check\Lambda_3^N(0)) \Sigma_0^i]\Big|     = O(1/N) . \label{estLam3Sigma}
\end{align}
It follows from \eqref{diff:Jd-Jcheck} and \eqref{estmu0Y}, \eqref{estLam1Sigma} and  \eqref{estLam3Sigma}  that
\begin{align}
 0\le J_1(\check{u}_1, \check{u}_{-1})-J_1(u_1^b, \check{u}_{-1}) = O(1/N).
 \label{J1eNEN}
\end{align}
Note that the term $O(1/N) $ in \eqref{J1eNEN} does not depend on which player is selected to apply its best response.
This completes the proof.
\end{proof}

Let $u_i^b$ denote the best response strategy of ${\mathcal A}_i$ when all other players apply their strategies $\check u_{-i}$.

\begin{theorem}\label{thm:J1J2J3}
Under Assumptions \ref{assm:solLam} and \ref{assm:initialX}, we have
\begin{align}
&\max_{1\le i\le N}|J_i(u_i^b, {\check u}_{-i})- J_i(\hat u_i, \hat u_{-i})|
=O(1/N), \label{Jibhat}  \\
&\max_{1\le i\le N}  |J_i(\check{u}_i, {\check u}_{-i})- J_i(\hat u_i, \hat u_{-i})|=O(1/N). \label{Jichat}
\end{align}
\end{theorem}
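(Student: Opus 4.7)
The plan is to expand all three costs in terms of the rescaled Riccati variables and show that each of them converges, up to an $O(1/N)$ remainder that is uniform in $i$, to the same limiting expression determined by the mean field ODEs \eqref{ODELam1}--\eqref{ODELam4}. Then both \eqref{Jibhat} and \eqref{Jichat} follow by subtraction, with the latter combined with Theorem~\ref{thm:O(1/N)Nash}.

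For the centralized Nash cost, since $V_i(0,\mathbf{x})=\mathbf{x}^T\mathbf{P}_i(0)\mathbf{x}$, I would use Lemma~\ref{lm:Psubmat}, the rescaling \eqref{PLam}, and Assumption~\ref{assm:initialX} to expand
\begin{align*}
J_i(\hat u_i,\hat u_{-i})
=\Tr[\Lambda_1^N(0)\Sigma_0^i]
+\mu_0^T[\Lambda_1^N+\Lambda_2^N+\Lambda_2^{NT}+\Lambda_4^N](0)\mu_0
+O(1/N),
\end{align*}
in complete analogy with the derivation of \eqref{XcheckP1}--\eqref{checkJiLam}. The $O(1/N)$ remainder collects the corrections coming from the combinatorial factors $(N-1)/N$, $(N-1)/N^2$, $(N-1)(N-2)/N^2$, all of which are bounded uniformly in $N$ thanks to Definition~\ref{def:AS}, and also absorbs the trace term $(1/N^2)\sum_{j\ne i}\Tr[\Lambda_3^N(0)\Sigma_0^j]$ since $|\Sigma_0^j|\le C_\Sigma$ and $\Lambda_3^N$ is bounded. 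Corollary~\ref{cor:NSAS:LamN-Lam} then replaces each $\Lambda_k^N(0)$ by $\Lambda_k(0)$ at the cost of another $O(1/N)$ error, giving a leading expression in terms of the mean field matrices only.

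For the best-response cost I would start from \eqref{J1Pd}, apply the error bound $\Lambda_k^{bN}=\Lambda_k^b+O(1/N)$ from Lemma~\ref{lm:solLamdN}, drop the $\Lambda_3^{bN}$ trace as $O(1/N)$ by the same argument as above, and finally invoke Lemma~\ref{lm:LambLam124} to identify $\Lambda_1^b=\Lambda_1$ and $Y^{bN}|_{\Lambda^{bN}=\Lambda^b}=\Lambda_1+\zeta^b=\Lambda_1+\Lambda_2+\Lambda_2^T+\Lambda_4$. The outcome is
\begin{align*}
J_i(u_i^b,\check u_{-i})=\Tr[\Lambda_1(0)\Sigma_0^i]+\mu_0^T[\Lambda_1+\Lambda_2+\Lambda_2^T+\Lambda_4](0)\mu_0+O(1/N),
\end{align*}
which matches the leading expression derived for $J_i(\hat u_i,\hat u_{-i})$. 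Subtracting yields \eqref{Jibhat}; uniformity in $i$ is automatic because every constant that enters the bounds depends only on the model data and $C_\Sigma$, not on the identity of the deviating player (this is the same observation made at the end of the proof of Theorem~\ref{thm:O(1/N)Nash}).

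For \eqref{Jichat} I would decompose
\begin{align*}
J_i(\check u_i,\check u_{-i})-J_i(\hat u_i,\hat u_{-i})=\bigl[J_i(\check u_i,\check u_{-i})-J_i(u_i^b,\check u_{-i})\bigr]+\bigl[J_i(u_i^b,\check u_{-i})-J_i(\hat u_i,\hat u_{-i})\bigr];
\end{align*}
the first bracket is already $O(1/N)$ by \eqref{J1eNEN} in the proof of Theorem~\ref{thm:O(1/N)Nash}, and the second has just been bounded. I do not expect a genuine obstacle in this argument: almost all of the heavy lifting is in the earlier lemmas (Corollary~\ref{cor:NSAS:LamN-Lam}, Lemma~\ref{lm:solLamdN}, Lemma~\ref{lm:LambLam124}, Lemma~\ref{lm:checkYN-YdN}). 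The only mildly delicate point is the combinatorial bookkeeping that transforms factors such as $(N-1)(N-2)/N^2$ into $1+O(1/N)$ while keeping all remainders uniform in $i$; this is controlled by Remarks~\ref{rmk:bdcheckLam} and \ref{rmk:bdLamdN} together with Assumption~\ref{assm:initialX}.
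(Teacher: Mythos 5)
Your proposal is correct and follows essentially the same route as the paper's proof: expand $J_i(\hat u_i,\hat u_{-i})$ and $J_i(u_i^b,\check u_{-i})$ via the rescaled Riccati variables, pass to the mean field limit using Corollary~\ref{cor:NSAS:LamN-Lam} and Lemma~\ref{lm:solLamdN}, identify the common leading term $\mu_0^T[\Lambda_1+\Lambda_2+\Lambda_2^T+\Lambda_4](0)\mu_0+\Tr[\Lambda_1(0)\Sigma_0^i]$ via Lemma~\ref{lm:LambLam124}, and then obtain \eqref{Jichat} by combining \eqref{Jibhat} with Theorem~\ref{thm:O(1/N)Nash}. The only cosmetic difference is that you write the triangle-inequality decomposition for \eqref{Jichat} explicitly, which the paper leaves implicit.
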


\begin{proof}
By using the value function of the $N$-player Nash game, we have
\begin{align}
J_i(\hat u_i, \hat u_{-i}) =
& \mu_0^T   [ \Lambda_1^N(0) + \Lambda_2^N(0) + \Lambda_2^{NT}(0) + \Lambda_4^N(0) ]  \mu_0
+  \Tr[  \Lambda_1^N(0) \Sigma_0^i ] \label{Jihathat0}   \\
& + (1/N^2) \sum_{ i\neq j = 1}^N \Tr [ \Lambda_3^N(0) \Sigma_0^j]
+ \mu_0^T \{  - (\Lambda_2^N(0) + \Lambda_2^{NT}(0)) /N  \notag \\
 & +  \Lambda_3^N(0) (N-1)/N^2 +  \Lambda_4^N(0)  (2-3N)/N^2 \}  \mu_0
\notag  \\
=& \mu_0^T[\Lambda_1^N(0) +\Lambda_2^N(0)+\Lambda_2^{NT}(0)+\Lambda_4^N(0)]
\mu_0 \notag \\
& + {\rm Tr}[\Lambda_1^N(0)\Sigma_0^i] +O(1/N)\notag \\
=&  \mu_0^T[\Lambda_1(0) +\Lambda_2(0)+\Lambda_2^{T}(0)+\Lambda_4(0)]
\mu_0   \label{JiLam14mu} \\
 &
+ {\rm Tr}[\Lambda_1(0)\Sigma_0^i] +O(1/N),\nonumber
\end{align}
where the last equality follows from Corollary \ref{cor:NSAS:LamN-Lam}.
Similarly, we use \eqref{J1Pd} and Lemma \ref{lm:solLamdN} to obtain
\begin{align}
J_i(u_i^b, \check u_{-i})=&  \mu_0^TY^{bN}(0)
\mu_0 + {\rm Tr}[\Lambda_1^{bN}(0)\Sigma_0^i] +O(1/N) \nonumber   \\
=&  \mu_0^T[\Lambda_1(0)+ \zeta^b(0)]
\mu_0 +{\rm Tr}[\Lambda_1(0)\Sigma_0^i] +O(1/N). \label{JibLamzeta}
\end{align}
The term $O(1/N)$ in all estimates obtained above  does not depend on $i$.
By \eqref{JiLam14mu}--\eqref{JibLamzeta} and Lemma \ref{lm:LambLam124}, we obtain \eqref{Jibhat}, which combined with Theorem \ref{thm:O(1/N)Nash}  yields \eqref{Jichat}.
\end{proof}

\subsection{The general model}

Now we consider a general LQ  model where
$D$ and $D_0$ in \eqref{Xi} may be nonzero and where the cost \eqref{Ji} is modified by using the running cost
$ [X_i(t) - \Gamma X^{(N)}(t)-\eta]_Q^2 + [u_i(t)]_{R}^2$ and
the terminal cost
$ [X_i(T) - \Gamma_f X^{(N)}(T)-\eta_f]_{Q_f}^2$ for $\eta, \eta_f\in \mathbb{R}^n$. Then all the previous analysis in Sections \ref{sec:RiccatiEqns}--\ref{sec:O(1/N)Nash} may be easily adapted to this general model.

The value function in \eqref{Vansatz} is now replaced by the form
$$
V^G(t,\mathbf{x})=  \mathbf{x}^T \mathbf{P}_i(t) \mathbf{x} + 2 \mathbf{x}^T\mathbf{S}_i^{G}(t)  + \mathbf{r}_i^G (t), \quad
 1\leq i \leq N .
$$
The same ODE system \eqref{ODEP}--\eqref{Pcon} is used for $(\mathbf{P}_1, \cdots, \mathbf{P}_N)$. If $(\mathbf{P}_1, \cdots, \mathbf{P}_N)$ is given on $[0,T]$, then $ (\mathbf{S}_1^{G}, \cdots, \mathbf{S}_N^{G})$ is uniquely solved from a linear ODE system. Finally, given $(\mathbf{P}_i, \mathbf{S}^G_i)$, $1\le i\le N$, on $[0,T]$, each $\mathbf{r}^G_i$ is again solved from a linear ODE. For this general model,
Definition \ref{def:AS} about asymptotic solvability remains valid, and Theorem \ref{thm:NSAS} still holds.   The asymptotic analysis can be extended to treat  $\{\mathbf{S}_i^{G},  \mathbf{r}_i^{G}, 1\le i\le N \}$. We can accordingly determine  the Nash equilibrium strategies $\hat u_i^G$, $1\le i\le N$, the decentralized strategies $\check u_i^G$, $1\le i\le N$, and the best response strategy $u_i^{Gb}$ given $ \check u_{-i}^G$, which are further used to establish Theorems
\ref{thm:O(1/N)Nash} and \ref{thm:J1J2J3}.
 We summarize the following result:
\begin{corollary}
Under Assumptions \ref{assm:solLam} and \ref{assm:initialX}, Theorems \ref{thm:O(1/N)Nash} and \ref{thm:J1J2J3} still hold for the general model with parameters $(D,D_0, \eta, \eta_f)$.
\end{corollary}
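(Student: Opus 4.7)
The plan is to mirror, term by term, the analysis carried out in Sections \ref{sec:RiccatiEqns}--\ref{sec:O(1/N)Nash}, exploiting the fact that the additional data $(D,D_0,\eta,\eta_f)$ only affect the linear and constant components of the value functions, not the Riccati (quadratic) component. Concretely, I would write the full state value function of player $\mathcal{A}_i$ in the form $V^G_i(t,\mathbf{x})=\mathbf{x}^T\mathbf{P}_i(t)\mathbf{x}+2\mathbf{x}^T\mathbf{S}^G_i(t)+\mathbf{r}^G_i(t)$, where $\mathbf{P}_i$ still solves the ODE system \eqref{ODEP}--\eqref{Pcon} (the quadratic part of the HJB is unchanged), while $\mathbf{S}^G_i$ and $\mathbf{r}^G_i$ solve linear ODEs driven by $(D,D_0,\eta,\eta_f)$ and the previously analyzed $\{\mathbf{P}_k\}$. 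Asymptotic solvability (Definition \ref{def:AS}) is thus characterized by exactly the same pair of Riccati ODEs \eqref{ODELam1}--\eqref{ODELam2}, so Theorem \ref{thm:NSAS} and Corollary \ref{cor:NSAS:LamN-Lam} carry over unchanged.

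Next I would apply the rescaling technique of Lemma \ref{lm:Psubmat} and Lemma \ref{lm:solPLam} to $\mathbf{S}^G_i$. By symmetry $\mathbf{S}^G_1=(s_1^{NT},s_2^{NT},\ldots,s_2^{NT})^T$ with $s_1^N,s_2^N\in\mathbb{R}^n$, and $\mathbf{S}^G_i=J_{1i}^T\mathbf{S}^G_1$. Setting rescaled variables $\sigma_1^N=s_1^N$, $\sigma_2^N=Ns_2^N$ reduces the large linear system to a low-dimensional linear ODE in $(\sigma_1^N,\sigma_2^N)$, which is a perturbation of order $O(1/N)$ of a limit linear ODE in $(\sigma_1,\sigma_2)$; existence, uniqueness, and $O(1/N)$ convergence follow from Gr\"onwall's lemma, mirroring Corollary \ref{cor:NSAS:LamN-Lam}. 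The scalar component $\mathbf{r}^G_i$ is then obtained by a direct integration and handled analogously. The decentralized strategies $\check{u}^G_i$ are constructed from the mean field limit state $\overline{X}^G$, which now satisfies \eqref{dbarX} with an added drift $D\cdot 0+D_0\cdot 0$ type terms absorbed in the diffusion and an affine shift induced by $\eta$; Assumption \ref{assm:initialX} is sufficient for the $L^2$ estimates on $\overline X^G$.

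For the $O(1/N)$-Nash property, I would repeat the best response analysis of Section \ref{sec:O(1/N)Nash} with the ansatz $V_1^{G,b}(t,\mathbf{x},\bar x)=\mathbf{x}^T\mathbf{P}_1^b\mathbf{x}+2\mathbf{x}^T\mathbf{P}_{12}^b\bar x+\bar x^T\mathbf{P}_2^b\bar x+2\mathbf{x}^T\mathbf{S}_1^{G,b}+2\bar x^T\mathbf{s}_2^{G,b}+\mathbf{r}_1^{G,b}$. The quadratic blocks satisfy the same ODEs \eqref{ODEP1d}--\eqref{ODEP2d}, already solved in Lemma \ref{lm:exissolP1P2d}; the linear and constant blocks solve additional linear ODEs, admitting unique solutions on $[0,T]$ once the quadratic blocks are given. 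After rescaling and comparison with the corresponding linear blocks obtained when $\mathcal{A}_1$ applies $\check u_1^G$, the analogues of Lemmas \ref{lm:Lam1dN-Lam1}, \ref{lm:checkLam1-Lam1}, \ref{lm:Lam211dN-Lam2}, and \ref{lm:checkYN-YdN} yield $O(1/N)$ error bounds for every block. Plugging these into the closed-form cost expressions (obtained by taking expectations of $V^{G,b}_1(0,X(0),\overline X^G(0))$ and $\check V^G_1(0,X(0),\overline X^G(0))$ as in \eqref{checkJiP} and \eqref{J1Pd}), the linear-in-$\mathbf{x}$ terms contribute $\mu_0^T(\cdot)$-type inner products and the constant terms contribute $\mathbf{r}^G$-type scalars; each of these differences is $O(1/N)$ by the block-wise convergence. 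Thus $J^G_i(\check u_i^G,\check u_{-i}^G)-J^G_i(u_i^{G,b},\check u_{-i}^G)=O(1/N)$, giving Theorem \ref{thm:O(1/N)Nash} for the general model, and a parallel comparison with the centralized Nash cost yields Theorem \ref{thm:J1J2J3}.

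The main obstacle, though essentially bookkeeping, is the careful derivation of the low-dimensional ODEs for $(\sigma_1^N,\sigma_2^N)$ and $\mathbf{r}^{G,N}$ in both the ``all players decentralized'' scenario and the ``best-response'' scenario, together with the identification of the limit linear ODEs (as in Lemma \ref{lm:LambLam124}) that equate the two limits at $N=\infty$. Once this identification is made, the cancellation at leading order is exact and only $O(1/N)$ remainders survive; after that, Gr\"onwall's lemma and the moment bounds of Assumption \ref{assm:initialX} close the argument without any new analytic ingredient.
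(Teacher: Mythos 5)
Your proposal follows essentially the same route as the paper, which itself only sketches this extension in Section \ref{sec:O(1/N)Nash}: retain the quadratic blocks $\mathbf{P}_i$ governed by the unchanged Riccati system \eqref{ODEP}--\eqref{Pcon}, append linear and constant terms $\mathbf{S}_i^G$, $\mathbf{r}_i^G$ solving linear ODEs, and push the rescaling and Gr\"onwall estimates through the decentralized and best-response cost comparisons. The only small imprecision is your description of how $(D, D_0, \eta)$ enter the mean field limit dynamics (only $D_0$ adds an additive diffusion term to $\overline{X}$, while $\eta$, $\eta_f$ affect only the cost), but this does not change the structure or validity of the argument.
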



\section{Numerical example}
\label{sec:num}

We present a numerical example to illustrate asymptotic solvability and  individual costs.
The parameter values  are
$A =-1$, $B = 1$, $B_0=-2$, $B_1=4$, $G = 1$, $R=-1$, $Q =8$,
$\Gamma =0.8$,
$Q_f = 8$, $\Gamma_f = 0.8$, and $T=2$.
We take the initial conditions $X_i(0)=1$ for all $i\geq 1$, and so $\overline{X}(0)=1$.

When \eqref{ODELam1}--\eqref{ODELam2} admits a solution $(\Lambda_1, \Lambda_2)$ on $[0, T]$, we use MATLAB ODE solver \texttt{ode45} to solve \eqref{ODELam1}--\eqref{ODELam4} to obtain the solution
$(\Lambda_1, \Lambda_2, \Lambda_3, \Lambda_4)$.
At $t=0$, we obtain $ \Lambda_1(0)=3.9435$, $\Lambda_2(0)=-2.3751$, $\Lambda_3(0)=1.8351$  and $\Lambda_4(0)=1.7786$.
Fig. \ref{fig:Lam12J} (left panel) shows that
 \eqref{ODELam1}--\eqref{ODELam2} admits a solution $(\Lambda_1, \Lambda_2)$ on $[0, T]$ so that the Nash game \eqref{Xi}--\eqref{Ji}
 has asymptotic solvability. By the initial conditions and
 \eqref{JiLam14mu}, under Nash strategies the asymptotic per agent
  cost is  $\lim_{N\to\infty}J_i(\hat u_i, \hat u_{-i}) 
  = \Lambda_1(0)+2\Lambda_2(0)+\Lambda_4(0)=0.9719$, which is indicated by the dashed horizonal line in Fig. \ref{fig:Lam12J} (right panel).
Fig. \ref{fig:Lam12J} (right panel) shows that   as $N$ increases,
the cost $J_i(\check u_i, \check u_{-i})$ of player $\mathcal{A}_i$ under the set of decentralized strategies  approaches  $\lim_{N\to \infty}J_i(\hat{u}_i, \hat{u}_{-i})$, as asserted by Theorem \ref{thm:J1J2J3}.

\begin{figure}[h]
\begin{centering}
\begin{tabular}{cc}
\hspace{-0.2cm}
\begin{minipage}{0.57\textwidth}
\includegraphics[width=1.2\textwidth, height=5cm]{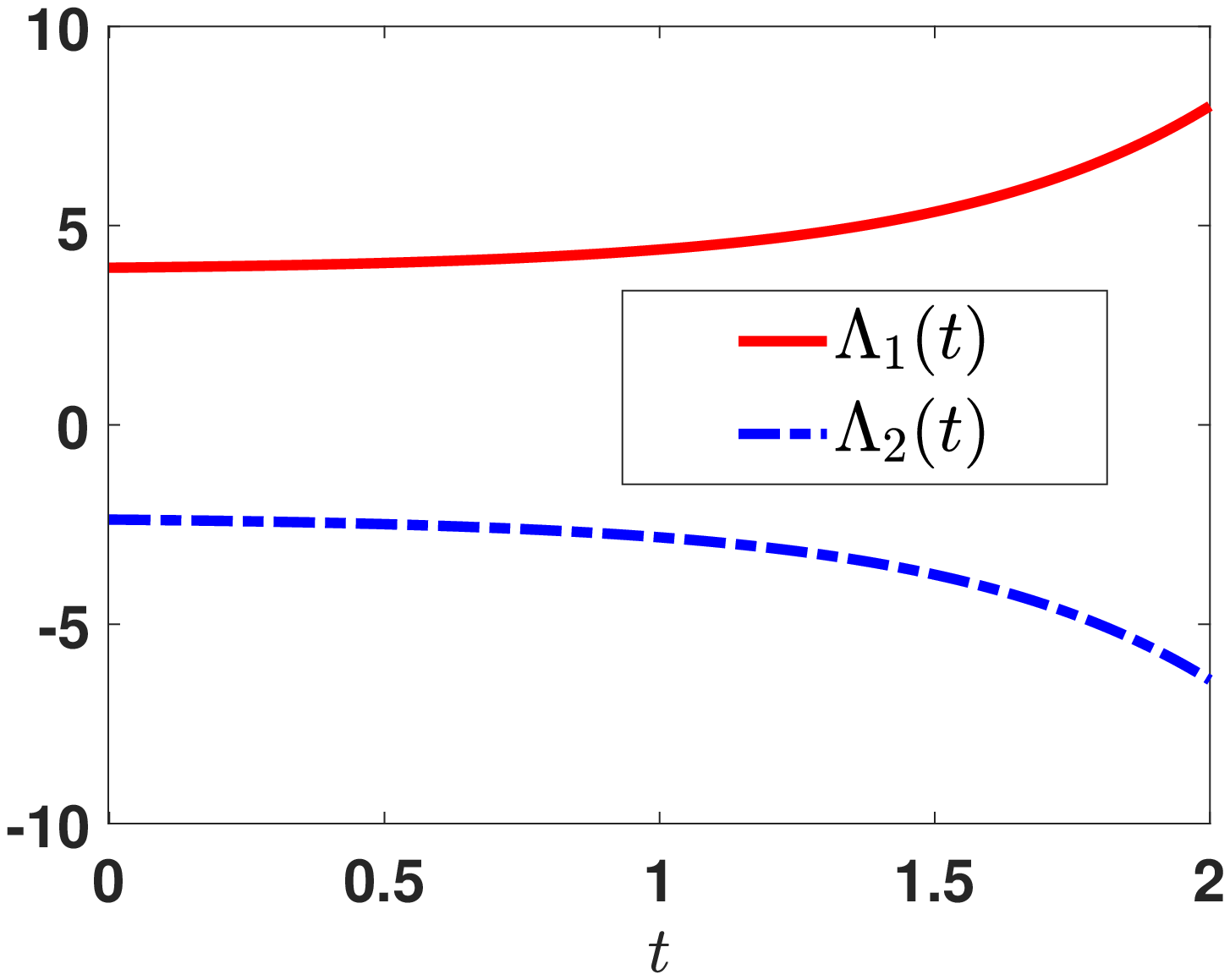}
\end{minipage} &
\hspace{-1cm}
\begin{minipage}{0.57\textwidth}
\includegraphics[width=1.2\textwidth, height=5cm]{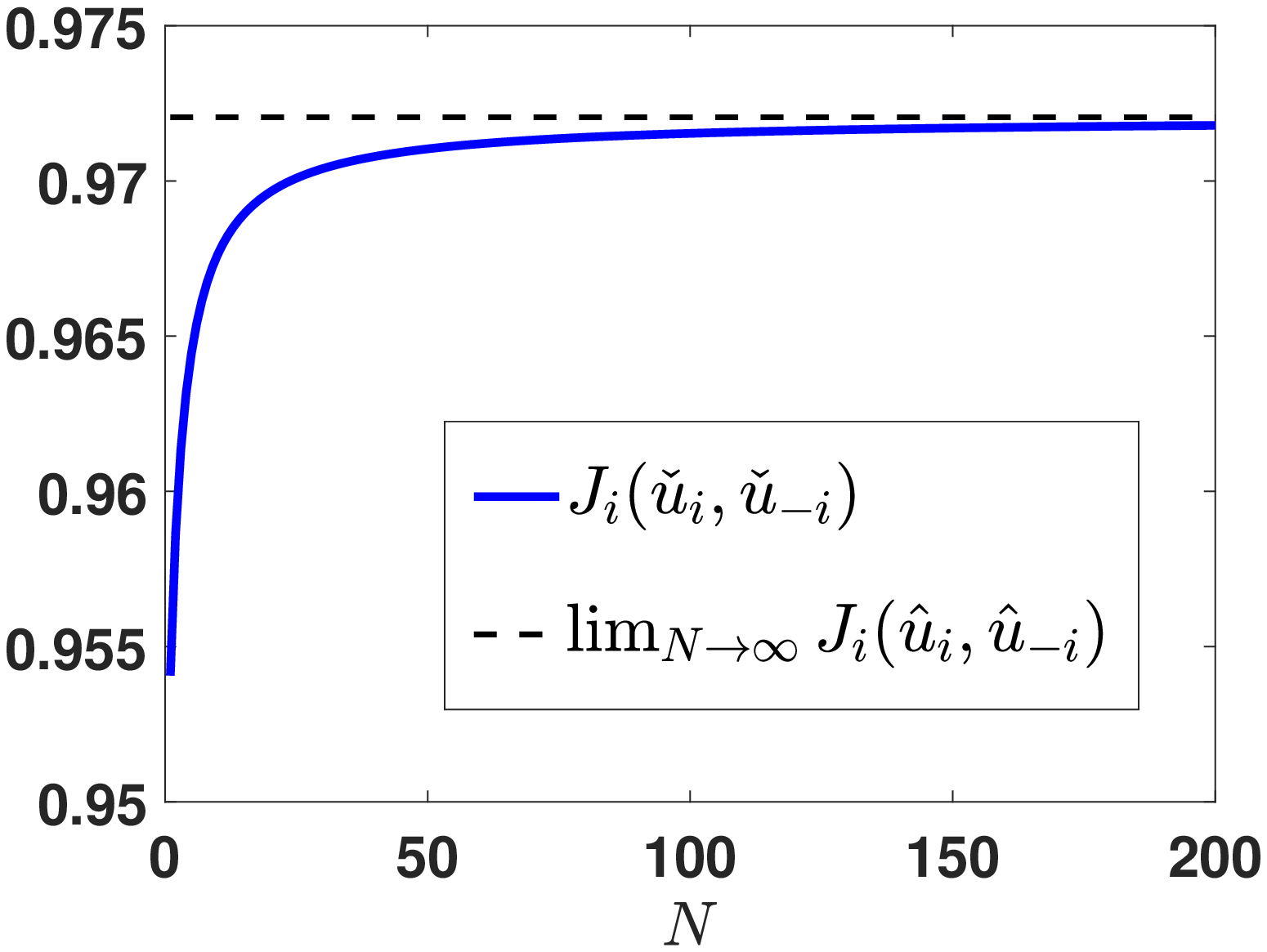}
\end{minipage}
\end{tabular}
\begin{minipage}{0.97\textwidth}
\caption{
Left panel: $(\Lambda_1, \Lambda_2)$
admits a solution on $[0, T]$ with $T=2$.
Right panel: The cost of player $\mathcal{A}_i$ under the set of decentralized strategies $(\check{u}_i, \check{u}_{-i})$ converges to a limit as $N\to \infty$.}
\label{fig:Lam12J}
\end{minipage}
\end{centering}
\end{figure}

\section{Conclusion}
\label{sec:conclusion}

This paper studies an asymptotic solvability problem for LQ mean field games with controlled diffusions and  indefinite cost weights.
  By a rescaling approach
we derive a necessary and sufficient condition for asymptotic solvability.
 We further establish an $O(1/N)$-Nash equilibrium property for the obtained decentralized strategies.

\appendix

\section{}
\label{appendix:pflmPsubmat}

\begin{proof}[Proof of Theorem \ref{thm: FBNE}] To show the feedback Nash equilibrium property, we let ${\mathcal A}_k$, $k=2, \cdots, N$, take the strategies in \eqref{uiP} and ${\mathcal A}_1$ unilaterally improves for itself. We need to show the optimality of $\hat u_1$ for minimizing $ J_1(t,\mathbf{x}, u_1, \hat u_{-1}) $ for any given $(t, \mathbf{x})$.

Step 1.
 Denote the Riccati ODEs \eqref{ODEP} in the form
\begin{align}
-\dot{\mathbf{P}}_i=\Phi_i(\mathbf{P}_1, \cdots,  \mathbf{P}_N), \quad
\mathbf{P}_i(T) = \mathbf{Q}_{i f},  \quad 1\le i\le N. \label{RicvfPhi}
\end{align}
Let $(\mathbf{P}_2, \cdots, \mathbf{P}_N)$ still be specified by \eqref{RicvfPhi}. We will derive a new but equivalent ODE for $\mathbf{P}_1$. It is necessary to do so since the best response control problem will give rise to a Riccati equation not exactly in the form of \eqref{ODEP} with $i=1$.
For parameter $\mathbf{x}\in \mathbb{R}^{Nn}$, based on \eqref{NEuhat} we consider the following equation system
\begin{align}
&0 = \widehat{\mathbf{B}}_i^T \mathbf{P}_i(t)\mathbf{x}
 + \mathbf{B}_0^T \mathbf{P}_i(t) \mathbf{B}_0 \sum_{k\neq i}^N  {u}_k^{\mathbf{x}} + [\mathbf{B}_0^T  \mathbf{P}_i(t) \mathbf{B}_0+ \mathbf{B}_i^T \mathbf{P}_i(t)\mathbf{B}_i   +  R]  u_i^{\mathbf{x}} , \label{equx} \\
  &1\le i\le N, \nonumber
\end{align}
which under \eqref{Pcon} has a unique solution
\begin{align}
 u_i^{\mathbf{x}} &= - [  R + \mathbf{B}_i^T  \mathbf{P}_i(t)
 \mathbf{B}_i  ]^{-1}
 [ \mathbf{B}_0^T \mathbf{P}_i(t) \mathbf{B}_0  \mathbf{M}_{0}(t)   +  \widehat{\mathbf{B}}_i^T  \mathbf{P}_i(t) ] \mathbf{x} \nonumber\\
 &=:{\mathbf K}_i(t)\mathbf{x}, \quad 1\le i\le N.
\end{align}

Now for $i=1$, we  further use \eqref{equx} to obtain
\begin{align}
u_1^{\mathbf{x}}=&-[ R+ \mathbf{B}_1^T \mathbf{P}_1(t)\mathbf{B}_1   +\mathbf{B}_0^T  \mathbf{P}_1(t) \mathbf{B}_0
]^{-1}\\
&\quad \cdot \Big\{ \widehat{\mathbf{B}}_1^T
 \mathbf{P}_1(t)\mathbf{x}
 + \mathbf{B}_0^T \mathbf{P}_1(t) \mathbf{B}_0 \sum_{k= 2}^N{\mathbf K}_k(t)\mathbf{x}  \Big  \} \nonumber \\
 &= : \widetilde{\mathbf{K}}_1 (t) \mathbf{x}. \nonumber
\end{align}
Since $\mathbf{x}$ is arbitrary, we obtain the identity
\begin{align}
{\mathbf K}_1(t) = \widetilde{\mathbf K}_1(t), \quad \forall t\in [0,T]. \end{align}

Although the Riccati equation system \eqref{ODEP} may be written down without using the HJB equation \eqref{HJBV}, the following observation is useful. For each $\mathbf{P}_i$, the vector field in \eqref{ODEP} may be constructed from the quadratic form determined by the right hand side of \eqref{HJBV}. For illustration, take $k\ne i$. Then $\partial_{\mathbf{x}}^TV_i \widehat{\mathbf{B}}_k
\hat u_k$ in \eqref{HJBV} contributes $ \mathbf{P}_i \widehat{\mathbf{B}}_k {\mathbf{K}}_k(t)+ {\mathbf{K}}_k^T(t) \widehat{\mathbf{B}}^T_k \mathbf{P}_i $ contained in the right hand side of \eqref{ODEP}.
Now for the ODE \eqref{ODEP} of $\mathbf{P}_1$, whenever a term originates from $\hat u_1$ so that ${\mathbf K}_1(t)$ is used in the vector field, we replace $\mathbf{K}_1(t)$ by $\widetilde{\mathbf K}_1(t)$.
For example, now
 $\mathbf{M}_{0}^T \mathbf{B}_0^T \mathbf{P}_1 \mathbf{B}_0 \mathbf{M}_{0}$
 is replaced by
 $$
 \Big(\widetilde{\mathbf K}_1(t)+\sum_{j=2}^N {\mathbf K}_j(t)\Big)^T \mathbf{B}_0^T \mathbf{P}_1 \mathbf{B}_0 \Big(\widetilde{\mathbf K}_1(t)+\sum_{j=2}^N {\mathbf K}_j(t)\Big),
 $$
which is ultimately expressed in terms of $(\mathbf{P}_1, \cdots,\mathbf{P}_N )$.
By the above substitution of ${\mathbf K}_1(t)$ by  $\widetilde{\mathbf K}_1(t)$, we see that $\mathbf{P}_1$ satisfies the new equation
\begin{align}
-\dot{\mathbf{P}}_1= \Phi_1^{\rm new}(\mathbf{P}_1, \cdots,  \mathbf{P}_N), \quad \mathbf{P}_1(T) = \mathbf{Q}_{1f}. \label{P1ODEnew}
\end{align}
The vector field $\Phi_1^{\rm new}$ is not fully displayed here to save space, but can be easily determined.
We note that the term $ \mathbf{P}_i \widehat{\mathbf{B}}_k {\mathbf{K}}_k(t)+ {\mathbf{K}}_k^T(t) \widehat{\mathbf{B}}^T_k \mathbf{P}_i $, $k\ne i$, mentioned above remains in $\Phi_1^{\rm new}$.
Then $(\mathbf{P}_1, \cdots,\mathbf{P}_N )$ is uniquely solved from
the ODE system specified by
$\Phi_1^{\rm new}, \Phi_2, \cdots, \Phi_N$.

 Step 2. Consider the initial time and state pair $(t,\mathbf{x})$, $t\in [0,T)$.  Suppose players ${\mathcal A}_k$, $k\ge 2$, apply the strategies in \eqref{uiP} while ${\mathcal A}_1$ minimizes the cost
$J_1(t,\mathbf{x}, u_1, \hat u_{-1})$ with the initial condition $(t,\mathbf{x})$. The resulting optimal control $u_1^{\rm br}$ on $[t, T]$ is  its best response. By considering the state process $X(s)$, $s\in [t,T]$ under $(u_1, \hat u_{-1})$ and the cost $J_1(t, \mathbf{x}, u_1, \hat u_{-1}) $,  it is straightforward to determine the Riccati equation of this optimal control problem in the form
\begin{align}
\dot{\mathbf{P}}_1^{\rm br}= \Phi^{\rm br} (\mathbf{P}_1^{\rm br};\mathbf{P}_1, \cdots,  \mathbf{P}_N ),\quad  \mathbf{P}_1^{\rm br}(T) = \mathbf{Q}_{1 f},  \quad s\in [t, T], \label{P1br}
\end{align}
where $ (\mathbf{P}_1, \cdots,  \mathbf{P}_N)$ specifies coefficients of \eqref{P1br} and has been solved from \eqref{P1ODEnew} and \eqref{RicvfPhi} with $i\ge 2 $.
By comparing the structure of \eqref{P1ODEnew} and \eqref{P1br}, we see that  \eqref{P1br} is verified by taking ${\mathbf{P}}_1^{\rm br}=\mathbf{P}_1$.
In particular, for the inverse term $[ R+  \mathbf{B}_1^T \mathbf{P}^{\rm br}_1(t)\mathbf{B}_1+ \mathbf{B}_0^T  \mathbf{P}^{\rm br}_1(t) \mathbf{B}_0  ]^{-1}$ appearing in \eqref{P1br},  we have
$$
   R+  \mathbf{B}_1^T \mathbf{P}^{\rm br}_1(t)\mathbf{B}_1+ \mathbf{B}_0^T  \mathbf{P}^{\rm br}_1(t) \mathbf{B}_0>0
$$
since $ R+  \mathbf{B}_1^T \mathbf{P}_1(t)\mathbf{B}_1+ \mathbf{B}_0^T
\mathbf{P}_1(t) \mathbf{B}_0 >0$ holds and we have taken $\mathbf{P}_1^{\rm br}=\mathbf{P}_1$.
 By uniqueness, we see that $\mathbf{P}_1^{\rm br}$ must be equal to $\mathbf{P}_1$ on $[t,T]$. The best response is well defined on $[t,T]$, and we use
\eqref{P1br} and $\mathbf{P}_1^{\rm br}$ to determine
 \begin{align}
u_1^{\rm br}(s)=\widetilde{\mathbf{K}}_1(s)X(s) ={\mathbf{K}}_1(s)X(s) , \quad s\in [t,T].
\end{align}
The optimality of $u_1^{\rm br}$ may be shown by using \eqref{P1br} and applying  completion of squares to the cost (see \cite[Theorem 6.6.1]{YZ1999}).
 Hence $\hat u_1$ gives the best response for $\mathcal{A}_i$ on $[t,T]$.

Step 3. The same best response property holds for $\hat u_i$ when any other single player $\mathcal{A}_i$ is chosen for unilateral performance improvement.
 We conclude that the feedback Nash equilibrium property holds.
\end{proof}

\begin{proof}[Proof of Lemma \ref{lm:Psubmat}]
The proof is carried out in the same manner as that of \cite[Theorem 3]{HZ2020}.

Step 1. For each $2\leq j < l \leq N$, denote $\mathbf{P}_i^\dagger = J_{jl}^T \mathbf{P}_i J_{jl}$, $1\leq i \leq N$.
We have that
\begin{align}
( \mathbf{P}_1^{\dagger}, \cdots, \mathbf{P}_{j-1}^{\dagger}, \
\mathbf{P}_l^{\dagger}, \ \mathbf{P}_{j+1}^{\dagger}, \ \cdots, \
\mathbf{P}_{l-1}^{\dagger} , \ \mathbf{P}_j^{\dagger} , \
\mathbf{P}_{l+1}^{\dagger} , \ \cdots, \ \mathbf{P}_N^{\dagger} )
\notag
\end{align}
satisfies the same ODE system \eqref{ODEP}--\eqref{Pcon} as
$(\mathbf{P}_1, \cdots , \mathbf{P}_N)$ does.
Thus $J_{jl}^T \mathbf{P}_1 J_{jl} = \mathbf{P}_1$ for all $2\leq j < l \leq N$.
Denote $\mathbf{P}_i = ( P^i_{jl})_{1\leq  j, l \leq N}$, where each $P^i_{jl}$ is an $n\times n$ matrix.
Then we have that
\begin{align}
 & P^1_{12} = P^1_{13} = \cdots = P^1_{1N} , \quad P^1_{21} = P^1_{31} = \cdots = P^1_{N1} , \notag \\
& P^1_{22} = P^1_{33} = \cdots  =P^1_{NN} , \quad
P^1_{j_1 l_1} = P^1_{j_2 l_2} , \quad \forall 2\leq j_1 \neq l_1 \leq N, \ \forall 2\leq j_2 \neq l_2 \leq N .  \notag
\end{align}
This proves the representation of $\mathbf{P}_1$ in \eqref{Psubmat}.

Step 2. For each $2\leq j \leq N$, denote $\mathbf{P}_i^\ddagger = J_{1j}^T \mathbf{P}_i J_{1j}$, $1\leq i \leq N$. We have that
\begin{align}
( \mathbf{P}^\ddagger_j, \ \mathbf{P}^\ddagger_2, \cdots, \mathbf{P}^\ddagger_{j-1}, \ \mathbf{P}^\ddagger_1, \ \mathbf{P}^\ddagger_{j+1},\ \cdots, \ \mathbf{P}^\ddagger_N )   \notag
\end{align}
and $(\mathbf{P}_1, \cdots , \mathbf{P}_N)$ both
satisfy  \eqref{ODEP}--\eqref{Pcon}.
This implies $\mathbf{P}_j = J_{1j}^T \mathbf{P}_1 J_{1j}$.
\end{proof}

\section{ }
\label{appendix:gdN}

\subsection{Perturbation terms used in \eqref{ODELam2N}--\eqref{ODELam4N}}
\begin{align}
  g_2^N =
 &  - \Lambda_2^N B [ K^N B^T S_{12}^N  (N-1)/N
 + 2 H^N B^T  \Lambda_2^N  ] / N    \notag \\
 &   + [ \Lambda_2^{NT} B H^N
 - S_{12}^{NT} B K^{NT}/N ] B^T S_{34}^N
 \notag \\
  & - S_{12}^{NT} B K^{NT} B_1^T \Lambda_3^N B_1
K^N B^T S_{12}^N (N-1)/N^5
 \notag  \\
 & -  \Lambda_2^{NT} B H^N B_1^T \Lambda_3^N B_1 H^N B^T
 [ \Lambda_1^N + \Lambda_2^N (N-2) /N  ] / N^2  \notag \\
 &  - S_{12}^{NT} B K^{NT} B_1^T
 \Lambda_3^N B_1 H^N B^T [ - S_{12}^N/N^3 +  \Lambda_2^N / N^4 ]
 \notag  \\
  & + \Lambda_2^{NT} B H^N B_1^T \Lambda_3^N B_1 K^N B^T
S_{12}^N (N-1) / N^4   \notag \\
& +[ - S_{12}^{NT} B F^N B^T  S_{12}^N     +   \Lambda_2^N  G
  - G^T \Lambda_2^N
 -  \Gamma^T Q \Gamma ] /N
 - G^T S_{34}^N ,  \notag
\end{align}
\begin{align}
 g_3^N =&  - S_{34}^N B K^N B^T  S_{12}^N
 - S_{12}^{NT} B K^{NT} B^T S_{34}^N
  \notag \\
 &   - 2 [ \Lambda_4^N B H^N B^T \Lambda_2^N
  + \Lambda_2^{NT} B H^N B^T \Lambda_4^N ] / N
  \notag  \\
 & - S_{12}^{NT} B K^{NT} B_1^T \Lambda_3^N B_1
K^N B^T S_{12}^N (N-1)/N^4  \notag \\
&   - \Lambda_2^{NT} B H^N B_1^T \Lambda_3^N B_1 H^N B^T \Lambda_2^N (N-2)/N^2  \notag \\
&  + S_{12}^{NT} B K^{NT} B_1^T
 \Lambda_3^N B_1 H^N B^T  [ S_{12}^N  - \Lambda_2^N / N ]
 /N^2
\notag \\
&  + [ S_{12}^{NT}  - \Lambda_2^{NT}/ N ] B H^N
  B_1^T \Lambda_3^N B_1 K^N B^T  S_{12}^N / N^2    \notag \\
  & - [ (  \Lambda_3^N - 2 \Lambda_4^N  ) G
    + G^T (  \Lambda_3^N   -2 \Lambda_4^N  ) ] / N
 -  S_{12}^{NT} B F^N B^T S_{12}^N   \notag \\
 &  + [ \Lambda_1^N +  \Lambda_2^{NT} ] B H^N B_0^T ( \Lambda_1^N + \Lambda_2^N+\Lambda_2^{NT} + \Lambda_4^N ) B_0 H^N B^T
 [ \Lambda_1^N +  \Lambda_2^N  ] ,  \notag
\end{align}
\begin{align}
g_4^N = & - S_{34}^N B K^N B^T  S_{12}^N
 - S_{12}^{NT} B K^{NT} B^T  S_{34}^N   \notag \\
 & +[ ( \Lambda_3^N - 3 \Lambda_4^N ) B H^N B^T \Lambda_2^N
 + \Lambda_2^{NT} B H^N B^T ( \Lambda_3^N - 3 \Lambda_4^N) ] /N
   \notag \\
  & - S_{12}^{NT} B K^{NT} B_1^T \Lambda_3^N B_1
K^N B^T S_{12}^N (N-1)/N^4  \notag \\
& - [ \Lambda_1^N B H^N B_1^T \Lambda_3^N B_1 H^N B^T \Lambda_2^N
  + \Lambda_2^{NT} B H^N B_1^T \Lambda_3^N B_1 H^N B^T \Lambda_1^N ] / N   \notag \\
&   - \Lambda_2^{NT} B H^N B_1^T \Lambda_3^N B_1 H^N B^T \Lambda_2^N (N-3) / N^2    \notag \\
&  + S_{12}^{NT} B K^{NT} B_1^T
 \Lambda_3^N B_1 H^N B^T  [ S_{12}^N  - \Lambda_2^N/N ] /N^2 \notag \\
 & + [ S_{12}^{NT}  - \Lambda_2^{NT} / N ] B H^N
  B_1^T \Lambda_3^N B_1 K^N B^T  S_{12}^N / N^2
  \notag\\
   & +  [ \Lambda_1^N +  \Lambda_2^{NT} ] B H^N B_0^T ( \Lambda_1^N + \Lambda_2^N+\Lambda_2^{NT} + \Lambda_4^N ) B_0 H^N B^T
 [ \Lambda_1^N +  \Lambda_2^N  ]   \notag \\
& - S_{12}^{NT} B F^N B^T S_{12}^N
 - [ ( \Lambda_3^N  - 2\Lambda_4^N ) G
   + G^T ( \Lambda_3^N  -2  \Lambda_4^N  ) ] /N   , \notag
\end{align}

\subsection{Perturbation terms used in \eqref{ODEcheckLam1}--\eqref{ODEcheckLam22}}
\begin{align}
 \check{g}_1^N := &  ( \check{\Lambda}_1^N G + G^T \check{\Lambda}_1^N  ) /N
 +  ( \check{\Lambda}_2^N G + G^T \check{\Lambda}_2^{NT} ) (N-1)/N^2
\notag \\
& +   ( \Gamma^T Q \Gamma/N  - \Gamma^T Q - Q \Gamma )/N
 +  \Theta^T B_0^T\check{S}^N B_0 \Theta / N^2 , \notag \\
 \check{g}_2^N
  := &   ( \Gamma^T Q \Gamma -\check{\Lambda}_2^N G )/N
+ G^T [ \check{\Lambda}_2^N/N + \check{\Lambda}_3^N /N^2 + \check{\Lambda}_4^N (N-2)/N^2 ]   \notag \\
& + \Theta^T B_0^T \check{S}^N  B_0 \Theta /N , \notag\\
\check{g}_3^N  =\ &\check{g}_4^N \notag \\
:= & (1/N) [ ( \check\Lambda_3^N-2 \check\Lambda_4^N )G
 + G^T (\check\Lambda_3^N - 2 \check\Lambda_4^N)] \notag \\
&  +  \Theta^T B_0^T ( \check{S}^N - \check\Lambda_1^N - \check\Lambda_2^N - \check\Lambda_2^{NT} - \check\Lambda_4^N ) B_0 \Theta ,  \notag\\
 \check{g}_{11}^N  := &  G^T [ \check\Lambda_{11}^N
+ \check\Lambda_{12}^N(N-1)/N ]/N
 + \Theta^T B_0^T [ \check\Lambda_{11}^N + \check\Lambda_{12}^N (N-1)/N ] B_0 (\Theta + \Theta_1 ) /N  \notag \\
&\quad +  (\check\Lambda_2^N B\Theta_1 + \Theta^T B_0^T
 \check{S}^N B_0 \Theta_1)/N , \notag \\
  \check{g}_{12}^N
  := & [(2 \check\Lambda_4^N - \check\Lambda_3^N ) B\Theta_1
 - G^T \check\Lambda_{12}^N
- \Theta^T B_0^T \check\Lambda_{12}^N B_0 (\Theta + \Theta_1)
+ \Theta^T B_1^T \check\Lambda_3^N B_1 \Theta_1]/N \notag \\
& + \Theta^T B_0^T ( \check{S}^N - \check\Lambda_1^N - \check\Lambda_2^N - \check\Lambda_2^{NT} - \check\Lambda_4^N ) B_0 \Theta_1 , \notag\\
 \check{g}_{22}^N
 := & ( \check\Lambda_{12}^{NT} B \Theta_1 + \Theta_1^T B^T \check\Lambda_{12}^N  ) /N
 + \Theta_1^T B_1^T \check\Lambda_3^N B_1 \Theta_1 (N-1)/N^2
\notag \\
&  - [ (\Theta + \Theta_1)^T B_0^T \check\Lambda_{12}^{NT} B_0 \Theta_1   + \Theta_1^T B_0^T \check\Lambda_{12}^N B_0 (\Theta + \Theta_1) ] /N  \notag \\
 &  + \Theta_1^T B_0^T ( \check{S}^N - \check\Lambda_1^N - \check\Lambda_2^N - \check\Lambda_2^{NT} - \check\Lambda_4^N ) B_0 \Theta_1 , \notag\\
\check{S}^N = &
\check{\Lambda}_1^N + ( \check\Lambda_2^N + \check\Lambda_2^{NT} )(N-1)/N    
+  \check{\Lambda}_3^N (N-1)/N^2
  + \check{\Lambda}_4^N (N-1)(N-2)/N^2 . \notag
\end{align}

\subsection{Perturbation terms  used in \eqref{ODELam2dN}--\eqref{ODELam22dN}}
\begin{align}
g_1^{bN} = &  \Lambda_1^{bN} B [ ( \mathcal{R}_1(\Lambda_1^{bN} ) + B_0^T S^{bN} B_0/N^2 )^{-1} - (\mathcal{R}_1(\Lambda_1^N) )^{-1} ] B^T \Lambda_1^{bN}
 \notag \\
& - (\Lambda_1^{bN} G + G^T \Lambda_1^{bN}  )/N  
 - (\Lambda_2^{bN} G + G^T \Lambda_2^{bN} ) (N-1)/N^2 \notag \\
& - ( \Gamma^T Q \Gamma/N - \Gamma^T Q - Q \Gamma )/N , \notag
\end{align}
\begin{align}
g_2^{bN} = &  - \Lambda_1^{bN} B [ ( \mathcal{R}_1 ( \Lambda_1^{bN} ) )^{-1} - ( \mathcal{R}_1 ( \Lambda_1^{bN} ) + B_0^T S^{bN} B_0/N^2 )^{-1} ] B^T \Lambda_2^N \notag \\
& - \Lambda_1^{bN} B ( \mathcal{R}_1( \Lambda_1^{bN} )  + B_0^T S^{bN} B_0/N^2 )^{-1} B_0^T S^{bN} B_0 \Theta/N \notag \\
& - ( G^T\Lambda_2^{bN} - \Lambda_2^{bN} G )/N
- G^T ( \Lambda_3^{bN} +(N-2) \Lambda_4^{bN} ) /N^2
 - \Gamma^T Q \Gamma/N , \notag
\end{align}
\begin{align}
g_3^{bN} = g_4^{bN} =  &  - \Theta^T B_0^T [ S^{bN} - \Lambda_1^{bN} - \Lambda_2^{bN} - (\Lambda_2^{bN})^T - \Lambda_4^{bN} ] B_0 \Theta \notag\\
& - (\Lambda_2^{bN})^T B ( \mathcal{R}_1(\Lambda_1^{bN}) )^{-1} B^T \Lambda_2^{bN} \notag \\
& +  [ B^T \Lambda_2^{bN} - B_0^T S^{bN} B_0\Theta/N]^T
 ( \mathcal{R}_1 (\Lambda_1^{bN} ) + B_0^T S^{bN} B_0 /N^2)^{-1} \cdot \notag \\
 &\quad [ B^T \Lambda_2^{bN} - B_0^T S^{bN} B_0\Theta/N]\notag\\
&  - (\Lambda_3^{bN} - 2 \Lambda_4^{bN}) G/N
 - G^T(\Lambda_3^{bN} - 2 \Lambda_4^{bN} )/N, \notag
\end{align}
\begin{align}
 g_{11}^{bN} = & - \Lambda_1^{bN} B (\mathcal{R}_1(\Lambda_1^{bN}))^{-1} B^T \Lambda_{11}^{bN} + \Lambda_1^{bN} B [ \mathcal{R}_1(\Lambda_1^{bN}) + B_0^T S^{bN} B_0/N^2 ]^{-1} \cdot \notag \\
 &\quad [B^T \Lambda_{11}^{bN} - B_0^T(\Lambda_{11}^{bN} + \Lambda_{12}^{bN}(N-1)/N )B_0(\Theta+ \Theta_1)/N  \notag\\
&\qquad - B_0^T S^{bN} B_0 \Theta_1 (N-1)/N^2 ] \notag \\
& - G^T (\Lambda_{11}^{bN}/N + \Lambda_{12}^{bN}(N-1)/N^2)
- \Lambda_2^{bN} B \Theta_1 /N, \notag
\end{align}
\begin{align}
g_{12}^{bN} = & - \Theta^T B_0^T [S^{bN} (N-1)/N - \Lambda_1^{bN} - \Lambda_2^{bN} - (\Lambda_2^{bN})^T - \Lambda_4^{bN}  ] B_0
\Theta \notag \\
& - (\Lambda_2^{bN})^T B \mathcal{R}_1(\Lambda_1^{bN})^{-1} B^T \Lambda_{11}^{bN} \notag \\
& +  [B^T \Lambda_2^{bN} - \Theta^T B_0^T S^{bN} B_0 /N  ]^T
[\mathcal{R}_1(\Lambda_1^{bN}) + B_0^T S^{bN} B_0/N^2 ]^{-1} \cdot \notag \\
& \quad [ B^T \Lambda_{11}^{bN} - B_0^T S^{bN} B_0 \Theta_1 (N-1)/N^2
\notag\\
&\qquad - B_0^T ( \Lambda_{11}^{bN} + \Lambda_{12}^{bN}(N-1)/N ) B_0 (\Theta + \Theta_1 )/N ] \notag \\
& + [ G^T \Lambda_{12}^{bN} + \Theta^T B_0^T \Lambda_{12}^{bN} B_0 (\Theta + \Theta_1) + ( \Lambda_3^{bN} - 2 \Lambda_4^{bN} ) B \Theta_1\notag\\
&\qquad - \Theta^T B_1^T \Lambda_3^{bN} B_1 \Theta_1  ]/N ,
\notag
\end{align}
\begin{align}
g_{22}^{bN} = &\ [ B^T \Lambda_{11}^{bN} - B_0^T S^{bN} B_0 \Theta_1 (N-1)/N^2 \notag \\
& \qquad- B_0^T ( \Lambda_{11}^{bN} + \Lambda_{12}^{bN} (N-1)/N ) B_0 (\Theta + \Theta_1 )/N ]^T \cdot \notag \\
&\quad [\mathcal{R}_1(\Lambda_1^{bN}) + B_0^T S^{bN} B_0 /N^2 ]^{-1}
  [ B^T \Lambda_{11}^{bN} - B_0^T S^{bN} B_0 \Theta_1 (N-1)/N^2 \notag \\
&\qquad - B_0^T ( \Lambda_{11}^{bN} + \Lambda_{12}^{bN} (N-1)/N ) B_0 (\Theta + \Theta_1 )/N ] \notag\\
 &- (\Lambda_{11}^{Nb})^T B ( \mathcal{R}_1(\Lambda_1^{bN}) )^{-1}
B^T \Lambda_{11}^{bN}
 \notag \\
& - \Theta_1^T B_0^T [S^{bN}(N-1)^2/N^2 - \Lambda_1^{bN} - \Lambda_2^{bN} - (\Lambda_2^{bN})^T - \Lambda_4^{bN} ] B_0 \Theta_1\notag\\
& - \Theta_1^T B_1^T \Lambda_3^{bN} B_1 \Theta_1 (N-1)/N^2
\notag \\
& - \Theta_1^T B_0^T [ -\Lambda_{11}^{bN}/N + \Lambda_{12}^{bN}(1-2N)/N^2 ] B_0 (\Theta + \Theta_1 )\notag\\
&- ( \Theta + \Theta_1)^T B_0^T [ -\Lambda_{11}^{bN}/N
+ \Lambda_{12}^{bN}(1-2N)/N^2 ] B_0 \Theta_1  \notag \\
&   - [ (\Lambda_{12}^{bN})^T B \Theta_1 + \Theta_1 B^T \Lambda_{12}^{bN} ] / N, \notag \\
S^{bN} = &  \Lambda_1^{bN} + ( \Lambda_2^{bN} + (\Lambda_2^{bN})^T )(N-1)/N
 +  \Lambda_3^{bN} (N-1)/N^2\notag\\
&  + \Lambda_4^{bN} (N-1)(N-2)/N^2 .  \notag
\end{align}

\section{A limit ODE system}
\label{appendix:PfsolLamdN}

We introduce the following ODE system:
\begin{align}
 & \begin{cases}
  \dot{\Lambda}_1^b =   \Lambda_1^b B
 ( \mathcal{R}_1 ( \Lambda_1^b )  )^{-1} B^T \Lambda_1^b
 - \Lambda_1^b A  -  A^T \Lambda_1^b - Q  ,  \\
\Lambda_1^b(T) =    Q_f  ,  \quad
 \mathcal{R}_1 ( \Lambda_1^b(t) ) > 0 , \quad \forall t\in[0, T] ,
\end{cases} \label{ODELam1d} \\
 & \begin{cases}
  \dot\Lambda_2^b  =  \Lambda_1^b B
 ( \mathcal{R}_1 ( \Lambda_1^b ) )^{-1} B^T\Lambda_2^b
 - ( \Lambda_1^b +  \Lambda_2^b ) G  \\
 \hspace{1.1cm}
 - A^T  \Lambda_2^b - \Lambda_2^b  ( A -   B \Theta )
 + Q \Gamma  , \\
\Lambda_2^b(T) =  -  Q_f \Gamma_f ,
\end{cases} \label{ODELam2d}
\end{align}
\begin{align}
& \begin{cases}
\label{ODELam3d}
  \dot\Lambda_3^b =    \Lambda_2^{bT}
B ( \mathcal{R}_1( \Lambda_1^b ) )^{-1} B^T \Lambda_2^b
 - (  \Lambda_2^b + \Lambda_4^b  )^T G \\
\hspace{1.1cm}
 - G^T (\Lambda_2^b + \Lambda_4^b )
   - \Lambda_3^b  (A- B \Theta)
 - ( A - B \Theta )^T \Lambda_3^b \\
\hspace{1.1cm}
 - \Theta^T B_0^T ( \Lambda_1^b  + \Lambda_2^b
 + \Lambda_2^{b T}  + \Lambda_4^b ) B_0   \Theta     \\
\hspace{1.1cm}
 - \Theta^T  B_1^T \Lambda_3^b  B_1 \Theta
 - \Gamma^T Q \Gamma , \\
\Lambda_3^b (T)  =   \Gamma_f^T Q_f \Gamma_f   ,
\end{cases}
\end{align}
\begin{align}
& \begin{cases}
  \dot\Lambda_4^b  =   \Lambda_2^{bT}
 B ( \mathcal{R}_1( \Lambda_1^b  ) )^{-1} B^T \Lambda_2^b
 - \Lambda_2^{bT} G - G^T \Lambda_2^b  \\
\hspace{1.1cm}
- \Lambda_4^b  (A+G -  B \Theta )
- (A+G -  B \Theta )^T  \Lambda_4^b  \\
\hspace{1.1cm}
 - \Theta^T B_0^T (\Lambda_1^b + \Lambda_2^b
 +  \Lambda_2^{bT} + \Lambda_4^b  ) B_0 \Theta
 - \Gamma^T Q \Gamma  ,  \\
 \Lambda_4^b  (T) =  \Gamma_f^T Q_f \Gamma_f   ,
\end{cases} \label{ODELam4d} \\
& \begin{cases}
  \dot\Lambda_{11}^b =   \Lambda_1^b   B
 ( \mathcal{R}_1 (\Lambda_1^b  ) )^{-1} B^T \Lambda_{11}^b
 + \Lambda_2^b  B \Theta_1 - A^T \Lambda_{11}^b  \\
\hspace{1.1cm}
- \Lambda_{11}^b (A+G - B( \Theta + \Theta_1 ) )   ,\\
\Lambda_{11}^b (T) = 0 ,
\end{cases} \label{ODELam11d}
\end{align}
\begin{align}
 & \begin{cases}
  \dot\Lambda_{12}^b  =   \Lambda_2^{bT} B
 ( \mathcal{R}_1  (\Lambda_1^b ) )^{-1} B^T \Lambda_{11}^b
 - G^T ( \Lambda_{11}^b + \Lambda_{12}^b  )   \\
 \hspace{1.1cm}
 - ( A  - B \Theta )^T \Lambda_{12}^b
 - \Lambda_{12}^b  (A+G - B(\Theta + \Theta_1 ) ) \\
 \hspace{1.1cm}
 - \Theta^T B_0^T
(\Lambda_1^b  + \Lambda_2^b  + \Lambda_2^{bT}
+ \Lambda_4^b  ) B_0 \Theta_1  \\
 \hspace{1.1cm}
 - \Theta^T B_0^T ( \Lambda_{11}^b   + \Lambda_{12}^b  ) B_0 (\Theta + \Theta_1 ) + \Lambda_4^b  B \Theta_1  , \\
\Lambda_{12}^b(T) = 0 ,
\end{cases}\label{ODELam12d}
\end{align}
\begin{align}
 & \begin{cases}
   \dot{\Lambda}_{22}^b
  =    \Lambda_{11}^{bT} B
 ( \mathcal{R}_1( \Lambda_1^b ) )^{-1} B^T \Lambda_{11}^b
\\
\hspace{1.1cm}
  - \Lambda_{22}^b  (A+G - B ( \Theta + \Theta_1 ) )
  + \Lambda_{12}^{bT} B \Theta_1  \\
\hspace{1.1cm}
  -  (A+G - B( \Theta + \Theta_1 ) )^T \Lambda_{22}^b
 + \Theta_1^T B^T \Lambda_{12}^b   \\
\hspace{1.1cm}
  - \Theta_1^T B_0^T ( \Lambda_1^b  + \Lambda_2^b
 + \Lambda_2^{bT} + \Lambda_4^b  ) B_0 \Theta_1 \\
\hspace{1.1cm}
 - ( \Theta + \Theta_1 )^T B_0^T \Lambda_{22}^b  B_0
 ( \Theta + \Theta_1 )   \\
\hspace{1.1cm}
- \Theta_1^T B_0^T ( \Lambda_{11}^b  + \Lambda_{12}^b  ) B_0
 (\Theta + \Theta_1 ) \\
\hspace{1.1cm}
 - (\Theta + \Theta_1 )^T B_0^T
(  \Lambda_{11}^b   + \Lambda_{12}^b  )^T B_0 \Theta_1   ,  \\
\Lambda_{22}^b (T) = 0 .
\end{cases} \label{ODELam22d}
\end{align}
Under Assumption~\ref{assm:solLam}, the coefficients in \eqref{ODELam1d}--\eqref{ODELam22d} are defined on $[0, T]$. We may regard \eqref{ODELam1d}--\eqref{ODELam22d}
as the limit ODE system for \eqref{ODELam1dN}--\eqref{ODELam22dN}.
\begin{lemma}\label{lemma:Lam1d}
Under Assumption 1, the ODE system \eqref{ODELam1d}--\eqref{ODELam22d} admits a unique solution on $[0,T]$.
\end{lemma}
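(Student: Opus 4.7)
The strategy is to exploit the triangular cascade structure of the ODE system \eqref{ODELam1d}--\eqref{ODELam22d}: equation \eqref{ODELam1d} is a stand-alone Riccati equation for $\Lambda_1^b$, and once $\Lambda_1^b$ is in hand each of the remaining equations is a first-order linear ODE whose coefficients are already determined. Concretely, $\Lambda_2^b$ depends only on $\Lambda_1^b$; $\Lambda_4^b$ depends only on $(\Lambda_1^b,\Lambda_2^b)$; $\Lambda_3^b$ depends on $(\Lambda_1^b,\Lambda_2^b,\Lambda_4^b)$; $\Lambda_{11}^b$ depends on $(\Lambda_1^b,\Lambda_2^b)$; $\Lambda_{12}^b$ depends on $(\Lambda_1^b,\Lambda_2^b,\Lambda_4^b,\Lambda_{11}^b)$; and finally $\Lambda_{22}^b$ depends on all the preceding unknowns.

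The first step is to observe that the Riccati equation \eqref{ODELam1d} has exactly the vector field $\Psi_1$ defined before \eqref{ODELam1} and the same terminal condition $\Lambda_1^b(T)=Q_f$. Hence, by Assumption~\ref{assm:solLam} and the standard uniqueness of solutions to Riccati ODEs along a trajectory on which $\mathcal{R}_1(\Lambda_1^b)>0$ is preserved, one has $\Lambda_1^b(t)=\Lambda_1(t)$ on $[0,T]$; in particular $\Lambda_1^b$ exists, is unique, and is bounded with $\mathcal{R}_1(\Lambda_1^b(t))>0$ throughout $[0,T]$.

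The second step is to solve \eqref{ODELam2d}--\eqref{ODELam22d} in the cascade order indicated above. With $\Lambda_1^b=\Lambda_1$ frozen, each of these is a linear inhomogeneous ODE whose coefficient matrices and forcing terms are continuous (in fact bounded) on $[0,T]$, thanks to the boundedness of $\Lambda_1,\Theta,\Theta_1$ and the invertibility of $\mathcal{R}_1(\Lambda_1^b)$. Standard linear ODE theory then yields a unique global solution on $[0,T]$ for each of $\Lambda_2^b,\Lambda_4^b,\Lambda_3^b,\Lambda_{11}^b,\Lambda_{12}^b,\Lambda_{22}^b$ in turn, closing the argument.

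There is no serious obstacle here: the only point requiring care is matching \eqref{ODELam1d} with \eqref{ODELam1} both in vector field and terminal datum so that the existing Assumption~\ref{assm:solLam} can be invoked to give $\Lambda_1^b$ globally on $[0,T]$; after that, the remainder is a routine chain of linear ODE arguments.
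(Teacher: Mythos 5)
Your proposal is correct and follows essentially the same route as the paper: identify \eqref{ODELam1d} with \eqref{ODELam1} so that Assumption~\ref{assm:solLam} gives $\Lambda_1^b=\Lambda_1$ on $[0,T]$, then solve the remaining equations as a cascade of first-order linear ODEs with bounded continuous coefficients. The only cosmetic difference is that you order the linear equations more finely ($\Lambda_2^b,\Lambda_4^b,\Lambda_3^b,\Lambda_{11}^b,\Lambda_{12}^b,\Lambda_{22}^b$) while the paper treats \eqref{ODELam3d}--\eqref{ODELam12d} as one linear system; both are valid.
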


\begin{proof}
We  have that \eqref{ODELam1d} admits a unique solution
$\Lambda_1^b =\Lambda_1$ on $[0, T]$.
With $\Lambda_1^b$ obtained from solving \eqref{ODELam1d}, \eqref{ODELam2d} is a first order linear ODE and admits a unique solution
$\Lambda_2^b$ on $[0, T]$.
Given $(\Lambda_1^b, \Lambda_2^b)$ on $[0, T]$, the ODE system \eqref{ODELam3d}--\eqref{ODELam12d} is a first order linear ODE system and admits a unique solution $(\Lambda_3^b, \cdots, \Lambda_{12}^b)$ on $[0, T]$. Finally, we further uniquely solve \eqref{ODELam22d} on $[0, T]$.
\end{proof}

\bibliographystyle{abbrv}
\bibliography{HYMFGRef}

\begin{thebibliography}{10}

\bibitem{BP2014}
M.~Bardi and F.~S. Priuli.
\newblock Linear-quadratic {$N$}-person and mean-field games with ergodic cost.
\newblock {\em SIAM J. Control Optim.}, 52(5):3022--3052, 2014.

\bibitem{BHK2014}
R.~Basna, A.~Hilbert, and V.~Kolokoltsov.
\newblock An epsilon-{Nash} equilibrium for non-linear {Markov} games of
  mean-field-type on finite spaces.
\newblock {\em Communications on Stochastic Analysis}, 8(4):449--468, 2014.

\bibitem{BTB2016}
D.~Bauso, H.~Tembine, and T.~Basar.
\newblock Opinion dynamics in social networks through mean-field games.
\newblock {\em SIAM J. Control Optim.}, 54(6):3225--3257, 2016.

\bibitem{BCLY2017}
A.~Bensoussan, M.~H.~M. Chau, Y.~Lai, and S.~C.~P. Yam.
\newblock Linear-quadratic mean field {Stackelberg} games with state and
  control delays.
\newblock {\em SIAM J. Control Optim.}, 55(4):2748--2781, 2017.

\bibitem{BFY2013}
A.~Bensoussan, J.~Frehse, and S.~C.~P. Yam.
\newblock {\em Mean Field Games and Mean Field Type Control Theory}.
\newblock Springer, New York, 2013.

\bibitem{BSYY2016}
A.~Bensoussan, K.~C.~J. Sung, S.~C.~P. Yam, and S.~P. Yung.
\newblock Linear-quadratic mean-field games.
\newblock {\em J. Optim. Theory Appl.}, 169(2):496--529, 2016.

\bibitem{CHM2017}
P.~E. Caines, M.~Huang, and R.~P. Malham\'{e}.
\newblock Mean field games.
\newblock In T.~Ba\c{s}ar and G.~Zaccour, editors, {\em Handbook of Dynamic
  Game Theory}, pages 345--372. Springer, Berlin, 2017.

\bibitem{CK2017}
P.~E. Caines and A.~C. Kizikale.
\newblock $\epsilon$-{Nash} equilibria for partially observed {LQG} mean field
  games with a major player.
\newblock {\em IEEE Trans. Autom. Control}, 62(7):3225--3234, 2017.

\bibitem{C2013}
P.~Cardaliaguet.
\newblock {\em Notes on mean field games}.
\newblock University of Paris, Dauphine, 2013.

\bibitem{CDLL2015}
P.~Cardaliaguet, F.~Delarue, J.-M. Lasry, and P.-L. Lions.
\newblock The master equation and the convergence problem in mean field games.
\newblock arXiv:1509.02505, 2015.

\bibitem{CD2018}
R.~Carmona and F.~Delarue.
\newblock {\em Probabilistic Theory of Mean Field Games with Applications
  I-II}.
\newblock Springer, Cham, Switzerland, 2018.

\bibitem{CFS2015}
R.~Carmona, J.~P. Fouque, and L.~H. Sun.
\newblock Mean field games and systemic risk.
\newblock {\em Communications in Mathematical Sciences}, 13:911--933, 2015.

\bibitem{CS2017}
P.~Chan and R.~Sircar.
\newblock Fracking, renewables, and mean field games.
\newblock {\em SIAM Review}, 59(3):588--615, 2017.

\bibitem{DAS2016}
A.~De~Paola, D.~Angeli, and G.~Strbac.
\newblock Distributed control of micro-storage devices with mean field games.
\newblock {\em IEEE Transactions on Smart Grid}, 7(2):1119--1127, 2016.

\bibitem{HH2017}
J.~Huang and M.~Huang.
\newblock Robust mean field linear-quadratic-{Gaussian} games with unknown
  {$L^2$}-disturbance.
\newblock {\em SIAM J. Control Optim.}, 55(5):2811--2840, 2017.

\bibitem{HWW2016}
J.~Huang, S.~Wang, and Z.~Wu.
\newblock Backward mean-field linear-quadratic-gaussian ({LQG}) games: Full and
  partial information.
\newblock {\em IEEE Trans. Autom. Control}, 61(12):3784--3796, Dec. 2016.

\bibitem{H2010}
M.~Huang.
\newblock Large-population {LQG} games involving a major player: the {N}ash
  certainty equivalence principle.
\newblock {\em SIAM J. Control Optim.}, 48(5):3318--3353, 2010.

\bibitem{HCM2007}
M.~Huang, P.~E. Caines, and R.~P. Malham\'{e}.
\newblock Large-population cost-coupled {LQG} problems with non-uniform agents:
  Individual-mass behavior and decentralized $\epsilon$-{N}ash equilibria.
\newblock {\em IEEE Trans. Autom. Control}, 52(9):1560--1571, Sep. 2007.

\bibitem{HMC2006}
M.~Huang, R.~P. Malham\'{e}, and P.~E. Caines.
\newblock Large population stochastic dynamic games: Closed-loop
  {McKean-Vlasov} systems and the {Nash} certainty equivalence principle.
\newblock {\em Commun. Inform. Systems}, 6(3):221--252, 2006.

\bibitem{HY2020b}
M.~Huang and X.~Yang.
\newblock Linear quadratic mean field social optimization: Asymptotic
  solvability and decentralized control.
\newblock {\em Appl. Math. Optim. (accepted), arXiv:2012.15468}, 2020.

\bibitem{HZ2018a}
M.~Huang and M.~Zhou.
\newblock Linear quadratic mean field games--part i: the asymptotic solvability
  problem.
\newblock In {\em Proc. 23rd Internat. Symp. Math. Theory Networks and
  Systems}, pages 489--495, Hong Kong, China, July 2018.

\bibitem{HZ2020}
M.~Huang and M.~Zhou.
\newblock Linear quadratic mean field games: Asymptotic solvability and
  relation to the fixed point approach.
\newblock {\em IEEE Trans. Autom. Control}, 65(4):1397--1412, April 2020.

\bibitem{HJN2019}
X.~Huang, S.~Jaimungal, and M.~Nourian.
\newblock Mean-field game strategies for optimal execution.
\newblock {\em Applied Mathematical Finance}, 26(2):153--185, 2019.

\bibitem{LW2011}
A.~Lachapelle and M.-T. Wolfram.
\newblock On a mean field game approach modeling congestion and aversion in
  pedestrian crowds.
\newblock {\em Transportation Research Part B: Methodological},
  45(10):1572--1589, 2011.

\bibitem{LZ2019}
D.~Lacker and T.~Zariphopoulou.
\newblock Mean field and $n$-agent games for optimal investment under relative
  performance criteria.
\newblock {\em Math. Finance}, 29(4):1003--1038, 2019.

\bibitem{LT2015}
L.~Laguzet and G.~Turinici.
\newblock Individual vaccination as {N}ash equilibrium in a {SIR} model with
  application to the 2009--2010 influenza {A} ({H}1{N}1) epidemic in {F}rance.
\newblock {\em Bulletin of Mathematical Biology}, 77(10):1955--1984, 2015.

\bibitem{LL2007}
J.-M. Lasry and P.-L. Lions.
\newblock Mean field games.
\newblock {\em Japan. J. Math.}, 2(1):229--260, 2007.

\bibitem{LZ2008}
T.~Li and J.-F. Zhang.
\newblock Asymptotically optimal decentralized control for large population
  stochastic multiagent systems.
\newblock {\em IEEE Trans. Autom. Control}, 53(7):1643--1660, 2008.

\bibitem{LRS2019}
Z.~Li, A.~M. Reppen, and R.~Sircar.
\newblock A mean field games model for cryptocurrency mining.
\newblock {\em arXiv:1912.01952}, 2019.

\bibitem{MH2020}
Y.~Ma and M.~Huang.
\newblock Linear quadratic mean field games with a major player: The
  multi-scale approach.
\newblock {\em Automatica}, 113(3), 2020.

\bibitem{MCH2013}
Z.~Ma, D.~S. Callaway, and I.~A. Hiskens.
\newblock Decentralized charging control of large populations of plug-in
  electric vehicles.
\newblock {\em IEEE Transactions on Control Systems Technology}, 21(1):67--78,
  2013.

\bibitem{MB2017}
J.~Moon and T.~Ba{\c{s}}ar.
\newblock Linear quadratic risk-sensitive and robust mean field games.
\newblock {\em IEEE Trans. Autom. Control}, 62(3):1062--1077, 2017.

\bibitem{NH2012}
S.~L. Nguyen and M.~Huang.
\newblock Linear-quadratic-{Gaussian} mixed games with continuum-parametrized
  minor players.
\newblock {\em SIAM J. Control Optim.}, 50(5):2907--2937, 2012.

\bibitem{NC2013}
M.~Nourian and P.~E. Caines.
\newblock $\epsilon$-{N}ash mean field game theory for nonlinear stochastic
  dynamical systems with major and minor agents.
\newblock {\em SIAM J. Control Optim.}, 51(4):3302--3331, 2013.

\bibitem{P2009}
H.~Pham.
\newblock {\em Continuous-Time Stochastic Control and Optimization with
  Financial Applications}.
\newblock Springer Science \& Business Media, Berlin, 2009.

\bibitem{SMN2018}
R.~Salhab, R.~P. Malham\'{e}, and J.~L. Ny.
\newblock A dynamic game model of collective choice in multiagent systems.
\newblock {\em IEEE Trans. Autom. Control}, 63(3):768--782, March 2018.

\bibitem{SGU2016}
I.~Swiecicki, T.~Gobron, and D.~Ullmo.
\newblock Schr{\"o}dinger approach to mean field games.
\newblock {\em Physical Review Letters}, 116(12):128701, 2016.

\bibitem{T2018}
R.~F. Tchuendom.
\newblock Uniqueness for linear-quadratic mean field games with common noise.
\newblock {\em Dyn. Games Appl.}, 8(1):199--210, 2018.

\bibitem{WH2019}
B.~Wang and M.~Huang.
\newblock Mean field production output control with sticky prices: Nash and
  social solutions.
\newblock {\em Automatica}, 100:90--98, 2019.

\bibitem{WZ2012}
B.~C. Wang and J.-F. Zhang.
\newblock Mean field games for large-population multiagent systems with
  {Markov} jump parameters.
\newblock {\em SIAM J. Control Optim.}, 50(4):2308--2334, 2012.

\bibitem{YMMS2012}
H.~Yin, P.~G. Mehta, S.~P. Meyn, and U.~V. Shanbhag.
\newblock Synchronization of coupled oscillators is a game.
\newblock {\em IEEE Trans. Autom. Control}, 57(4):920--935, 2012.

\bibitem{YZ1999}
J.~Yong and X.~Y. Zhou.
\newblock {\em Stochastic Controls: {H}amiltonian Systems and {HJB} Equations}.
\newblock Springer-Verlag, New York, 1999.

\bibitem{Y1980}
K.~Yosida.
\newblock {\em Functional Analysis}.
\newblock Springer-Verlag, Berlin, 6th edition, 1980.

\end{thebibliography}

\end{document}